\definecolor{expcol}{rgb}{1.0,0.5,0.5}
\definecolor{ecol}{rgb}{0.0, 0.5, 1.0}
\definecolor{ab}{rgb}{0.5, 0.0, 1.0}
\numberwithin{figure}{section}
\numberwithin{table}{section}
\numberwithin{equation}{section}
\DeclareMathOperator*{\argmin}{arg\,min}
\newcommand{\R}[0]{\mathbb{R}}
\newcommand{\N}[0]{\mathbb{N}}
\newcommand{\sF}[0]{\mathcal{F}}
\newcommand{\sG}[0]{\mathcal{G}}
\newcommand{\sA}[0]{\mathcal{A}}
\newcommand{\sC}[0]{\mathcal{C}}
\newcommand{\sD}[0]{\mathcal{D}}
\newcommand{\sL}[0]{\mathcal{L}}
\newcommand{\sM}[0]{\mathcal{M}}
\newcommand{\sN}[0]{\mathcal{N}}
\newcommand{\sP}[0]{\mathcal{P}}
\newcommand{\NN}{\mathbb{N}}
\newcommand{\AAA}{\mathbb{A}}
\newcommand{\RR}{\mathbb{R}}
\newcommand{\clp}{\mathcal{P}}
\newcommand{\clc}{\mathcal{C}}
\newcommand{\cld}{\mathcal{D}}
\newcommand{\cls}{\mathcal{S}}
\newcommand{\clm}{\mathcal{M}}
\newcommand{\cla}{\mathcal{A}}
\newcommand{\ch}{\check}
\newcommand{\eps}{\epsilon}
\newcommand{\yr}{\mathfrak{b}^r}
\newcommand{\cln}{\mathcal{N}}
\newcommand{\clb}{\mathcal{B}}
\definecolor{expcol}{rgb}{1.0,0.5,0.5}
\definecolor{ecol}{rgb}{0.0, 0.0, 1.0}
\newtheorem{lemma}{Lemma}[section]
\newtheorem{proposition}[lemma]{Proposition}
\newtheorem{theorem}[lemma]{Theorem}
\newtheorem{condition}[lemma]{Condition}
\newtheorem{remark}[lemma]{Remark}
\newtheorem{definition}{Definition}[section]
\begin{document}

\title{Ergodic Control of Resource Sharing Networks: Lower Bound on Asymptotic Costs.}
\author{Amarjit Budhiraja, Michael Conroy, and Dane Johnson}
%
%

\maketitle

\abstract{Dynamic capacity allocation control for resource sharing networks (RSN) is studied when the networks are in heavy traffic. The goal is to minimize an ergodic cost with a linear holding cost function. Our main result shows that the optimal cost associated with an associated Brownian control problem provides a lower bound for the asymptotic ergodic cost in the RSN for any sequence of control policies. A similar result for an infinite horizon discounted cost has been previously shown for general resource sharing networks in \cite{budcon1} and for general `unitary networks' in \cite{budgho2}. The study of an ergodic cost criterion requires different ideas as bounds on ergodic costs only yield an estimate on the controls in a time-averaged sense which makes the time-rescaling ideas of \cites{budcon1,budgho2} hard to implement. Proofs rely on working with a weaker topology on the space of controlled processes that is more amenable to an analysis for the ergodic cost. As a corollary of the main result, we show that the explicit policies constructed in \cites{budjoh2,budjoh} for general RSN are asymptotically optimal for the ergodic cost when the underlying cost per unit time has certain monotonicity properties.\\ \ \\ 

 {\em Keywords.} Ergodic control, heavy traffic, resource sharing networks, bandwidth sharing, queuing networks, Brownian control problems, reflected Brownian motion, hierarchichal greedy ideal.\\ 
}

\section{Introduction}
\label{sec:1}

We study capacity allocation control for  a family of  multiserver queueing networks, under heavy traffic, in which some job types may require simultaneous processing by multiple resources (servers). These Resource Sharing Networks (RSNs) were introduced in \cite{masrob} to model Internet flows (see also \cite{verloop2005stability}) however the underlying setting is quite general and is relevant for other communication network applications as well.  We consider a sequence of RSNs indexed by the {\em heavy traffic parameter} $r\in\mathbb{N}$ where all networks in the sequence have $J$ job types and $I$ resources which have individual processing capacities given by the positive vector $C=(C_1, \ldots, C_I)$.  The processing relationship between resources and job types is given by an $I\times J$ `incidence matrix' $K$, where $K_{i,j}=1$ if resource $i$ is required to process job type $j$ and $K_{i,j}=0$ otherwise.  A capacity allocation control policy in this network is given as a time-dependent processing rate assigned to all $J$ job types by the system administrator, $\yr(t)=( \yr_1(t),..., \yr_J(t))$.
The rate assigned to a particular job type must be supplied by all resources needed to process that job type as determined by the matrix $K$, which means that an admissible control must satisfy the capacity constraint $K\yr(t) \leq C$ for all $t\ge 0$.  For each job type, arrivals are given by a renewal process and job sizes (which determine the integrated processing rate required for completion) are iid nonnegative random variables.  We assume mutual independence between arrivals and sizes and from job type to job type.  Although the structure of the networks, as characterized by $I$, $J$, $K$, and $C$, does not depend on $r$ the distributions of job arrivals and sizes do in such a way that a suitable heavy traffic assumption (see Condition \ref{heavytraffic}) is satisfied.  A given strictly positive vector $h=(h_1,\ldots, h_J)$ determines a linear holding cost function on the queue length and we study the  long-term cost per unit time (also referred to as the ergodic cost), defined in (\ref{eq:ergcoscri}), for the  diffusion-scaled queuing system,  as $r\rightarrow\infty$.

Minimizing a given cost for a RSN over all potential control policies is, in general, a very difficult problem.  However, in many situations this optimization problem can be formally approximated by a more tractable so-called Brownian control problem (BCP) related to the RSN. BCPs were introduced in \cite{harrison1988brownian} and are formal diffusion approximations to the network control problem in the heavy-traffic limit.  While the BCP already represents a dramatic simplification of the control problem, it was shown in \cite{harvan} that under suitable conditions, the BCP can be restated in terms of a dimensionally-reduced equivalent workload formulation (EWF).  In the EWF the state process changes from the $J$-dimensional queue length to the $I$-dimensional workload for each resource which is computed using the matrix $K$ and the expected job sizes.  In applications where $I \ll J$, this represents a significant dimensional reduction in the control problem to be solved.  To better understand the motivation for the EWF see \cite{harvan} for a discussion of how a particular workload can be thought of as an equivalence class of queue lengths which differ from one another by a reversible control.

For our main result, we show that the optimal cost associated with the BCP in the EWF provides a lower bound for the asymptotic ergodic cost in the RSN for any sequence of  control policies that are admissible (Definition \ref{def:admissible}).
 This is the same type of result shown in \cite{budcon1} for an infinite-horizon discounted cost criterion and in \cite{budgho2} for discounted cost associated with `unitary' networks. To the best of our knowledge there are no analogous results known for an ergodic cost criterion.  Our result fits into a series of works that investigate the relationship between {\em asymptotic optimality} and BCP for various network models under heavy traffic assumptions (cf. \cite{marshrson,atakum,belwil1,belwil2,atar2014asymptotic,budgho1,DaiLin} for a small sample of works on this theme). A sequence of control policies is said to be asymptotically optimal if the limit of the corresponding costs as $r\rightarrow\infty$ is minimal, and since from our results the BCP provides a lower bound
 for this asymptotic minimal cost, any sequence of controls whose costs converge to this value is asymptotically optimal.

While not considered here, it is an interesting and challenging problem to design control policies that are easy-to-implement and achieve a certain quality of asymptotic performance.  Since the BCP provides a lower bound on the minimal asymptotic cost it may provide some helpful intuition for this pursuit.  In recent works \cite{budjoh,budjoh2}, explicit policies are constructed for RSNs which achieve what is known as `hierarchical greedy ideal' (HGI) asymptotic performance \cite{harmandhayan} for the ergodic cost considered here as well as for a discounted cost. If the cost per unit time as a function of the workload in the EWF (see \eqref{eq:hhat}) is nondecreasing, the BCP has an explicit solution given as a reflected Brownian motion,  and the asymptotic cost associated with these policies matches the cost associated with this reflected Brownian motion, namely the lower bound we establish. Thus, as discussed further in Remark \ref{rem:rem1}, this work proves asymptotic optimality of the policies given in \cite{budjoh,budjoh2}, with an ergodic cost criterion, when the above monotonicity property is satisfied.
  In general, however, HGI policies may not be optimal for the BCP.

This paper is organized as follows. Below, we discuss assumptions, proof techniques, and notation. Section \ref{sec:netcont} gives basic definitions and presents the control problem of interest. Section \ref{sec:2} describes the BCP used in our main result (Section \ref{sec:Main}). Section \ref{sec:prelim} presents auxiliary results used in the the proof of the main theorem, which is proved in Section \ref{sec:erg}. We conclude with the proofs of various technical results in Section \ref{sec:MovedProofs} which were postponed in order to streamline the presentation.

\subsection{Discussion of assumptions and proof techniques} 

We now  comment on the assumptions made.  We make standard mutual independence, positivity, and uniform square intergrability assumptions for interarrival times and job sizes. Additionally, the heavy traffic assumption (Condition \ref{heavytraffic}) we make is standard, cf. \cite{harmandhayan}.  Roughly, it says that parameters converge appropriately and  that the resource capacity equals resource demand, asymptotically. One key assumption is that of `local traffic' on each resource, which was first introduced in \cite{kankelleewil} (and also used in several subsequent works, see e.g. \cite{harmandhayan}). This says that each resource in the RSN  has a unique corresponding job type that only uses that resource. This condition ensures that the state space of the workload process (see \eqref{eq:workloadprocess}) is the entirety of $[0,\infty)^I$. These assumptions are presented precisely in Section \ref{sec.assump}. 

In  \cite{budcon1}, the analogous result to our main result, namely Theorem \ref{thm:main},  is established for an infinite-horizon discounted cost criterion. This work  also briefly discusses an ergodic cost criterion however no proofs are provided and its treatment is left as an open problem.  
A key step in the proof of Theorem \ref{thm:main} is establishing suitable tightness properties. As one would expect, the desired tightness properties stem from the boundedness of costs associated with a given sequence of controls. 
Indeed, this observation was the key ingredient in proofs of \cite{budcon1} which allowed establishing desired tightness results using a natural time-rescaling idea.
 However, in contrast to the discounted cost case, bounds on the ergodic cost  only guarantee `good behavior' of the controls in a time-averaged sense.
This  makes the time-change approach for establishing tightness taken in \cite{budcon1} hard to implement.
Instead we  take a different approach to proving required tightness properties.  Specifically, instead of arguing tightness of the controls as random variables  in the Skorokhod path space, we establish  tightness in the coarser topology of weak convergence (see Theorem \ref{thm:tightness}). 
More precisely, we establish tightness of certain random path occupation measures (see Definition 
\ref{def:occMeas}) which view the control process paths as  measures on $[0,\infty)$ rather than as elements of the Skorohod path space. This weak tightness property, although insufficient to prove the convergence of controlled processes,  turns out to be enough to argue convergence of the costs and for characterization of the weak limit points of the occupation measures. This is the main idea in the proof.
  We remark that   this  approach of using  tightness of the controls in the topology of weak convergence can also provide an alternative proof for the discounted cost case treated in  \cite{budcon1} that does not require the time-rescaling arguments used there.

An important result in the proof of Theorem \ref{thm:main} is Theorem \ref{thm:conv}, which 
gives a key characterization of the weak limit points of our random path occupation measures. In particular it shows that any limit point can be identified as the law of a controlled system in the appropriate BCP. Its proof rests on two results --Propositions \ref{prop:ArrServBnds} and \ref{prop:FuildScaledAllocConv}. The second parts  of these two results establish that the long-term time-average of residual service times and the long-term time-average of the difference between an arbitrary sequence of fluid scaled controls and the nominal capacity allocation determined from the heavy traffic condition, both vanish. This is another new ingredient in comparison to the proofs in \cite{budcon1}, where due to a discounted cost criterion such a careful  analysis is not needed. Arguments therein are closer to the uniform-in-time estimates in Propositions \ref{prop:ArrServBnds} (i) and \ref{prop:FuildScaledAllocConv} (i), which  follow from basic properties of renewal processes. Proofs of statements involving the arrival processes (namely part (i) of Propositions \ref{prop:ArrServBnds} and  \ref{prop:FuildScaledAllocConv}) are simpler than those for the service process (namely parts (ii) of the two propositions) due to the form of control in the network. That is, a system administrator can influence resource allocation, but has no influence on arrivals to the network (cf. \eqref{eq:Qdef}). The proofs of Propositions \ref{prop:ArrServBnds} and \ref{prop:FuildScaledAllocConv} are rather technical, so they are saved for the end (Section \ref{sec:MovedProofs}).

\subsection{Notation and conventions.} 
We denote $\R_+ = [0,\infty)$. For $d \in \N$, let $\sD^d= \sD([0,\infty) : \R^d)$ (resp. $\sD_+^d = \sD([0,\infty) : \R_+^d)$) denote the space of functions that are right continuous with left limits (RCLL) from $[0, \infty)$ to $\R^d$ (resp. $\R_+^d$) equipped with the usual Skorokhod topology. 
Also, let  $\sC^d= \sC([0,\infty) : \R^d)$ (resp. $\sC_+^d = \sC([0,\infty) : \R_+^d)$) denote the space of continuous functions from
$[0, \infty)$ to $\R^d$ (resp. $\R_+^d$) equipped with the local uniform topology. For fixed $T > 0$, the spaces $\sD([0,T] : \R^d)$, $\sC([0,T] : \R^d)$, $\sD([0,T] : \R_+^d)$, and $\sC([0,T] : \R_+^d)$ are defined similarly.  All stochastic processes in this work will have sample paths that are RCLL unless noted explicitly.  We let $\sN_+^d$ denote the nondecreasing functions in $\sD_+^d$ which is the space that contains the controls we will be working with.  
Let $\sM$ denote the space of nonegative, locally finite measures on $[0,\infty)$.
Recall that, for $m_n, m \in \sM$ we say that $m_n\to m$ in the {\em vague topology}, if for every continuous function $f: [0,\infty) \to \R$ with compact support, $\int f dm_n \to \int f dm$.
We let $\sM^d$ denote the $d$-fold product space with the usual product topology, where $\sM$ is equipped with the topology of vague convergence of measures.  Note that we can identify elements of $\sN^d$ and $\sM^d$, but the Skorokhod topology associated with $\sN^d$ is finer than the vague topology associated with $\sM^d$.  For our purposes here we find it more convenient to work with the coarser weak convergence topology associated with $\sM^d$ when proving tightness and convergence of controls.  Details about this are provided in Section \ref{sec:measSpaceDet}.

We let $\iota\in\sC^1$ denote the identity function on $\mathbb{R}_+$, namely $\iota(t) = t$. For $m \in \N$, we denote by $\AAA_m$ the set $\{1, 2, \ldots, m\}$.  For any Polish space $\mathcal{X}$ we let $\mathcal{B}(\mathcal{X})$ denote the Borel sets, $\mathcal{P}(\mathcal{X})$ denote the space of probability measures on $\mathcal{X}$, $\sC_b(\mathcal{X})$ the space of continuous, bounded, real-valued functions on $\mathcal{X}$, and $\mathcal{C}_0(\mathcal{X})$ the space of continuous, real-valued functions on $\mathcal{X}$ with compact support.  We let $\sC_0^2(\mathbb{R}^d)$ denote the space of continuous, real-valued functions on $\mathbb{R}^d$ with compact support and continuous first and second derivatives.  For  $v\in\mathbb{R}^{d}$, $|v|$ denotes its Euclidean norm. 
We will use coordinate-wise inequalities on vectors, e.g.  for  $u,v\in\mathbb{R}^{d}$ and $c\in\mathbb{R}$ the statements $u\ge v$
and $u\geq c$ mean  $u_{j}\geq v_{j}$  and $u_{j}\geq c$ for all $j\in\AAA_{d}$, respectfully.  
 As a convention, for a real sequence $\{a_l\}_{l\in \NN}$, $\sum_{l=1}^{n} a_l$ is taken to be $0$ if $n=0$.
We use $\overset{P}{\to}$ to denote convergence in probability and $\Rightarrow$ to denote convergence in distribution. $\mathcal{I}_A$ denotes the indicator function of the set $A$. A sequence $\{\theta_n, n\ge 1\}$ of probability measures on $\cld^d$ is said to be $\clc$-tight if it is tight in the usual Skorohod topology and any weak limit point $\theta$ is supported on $\clc^d$.



\section{The network and control policies}
\label{sec:netcont}

A RSN in heavy traffic is a sequence of stochastic processing networks, indexed by a traffic parameter $r \in \NN$, each with $J$ types of jobs and $I$ resources for processing them.  All networks in the sequence are described by a common $I\times J$ incidence matrix $K$ such that $K_{i,j}=1$ if the $j$th job type requires service from the $i$-th resource and $K_{i,j}=0$ otherwise.  For any job type $j$ the set  $\{i:K_{i,j}=1\}$ is nonempty and job type $j$ will be processed by all resources in this set simultaneously. In particular, all resources in this set will allocate the same capacity to  job type $j$ at every instant.  The capacity for each resource $i \in \AAA_I$ is denoted by $C_i$. This means that if at any time instant work of type $j \in \AAA_J$ is being processed at rate $\yr_j$ then we must have $C \ge K\yr$, where $C = (C_1, \ldots, C_I)$.  As the traffic parameter $r$ goes to $\infty$ the networks approach criticality in the sense that the traffic intensity converges to 1 (this is made precise in Condition \ref{heavytraffic}).  

For job type $j \in \AAA_J$, let $\{u_j^r(k)\}_{k\in \N}$ be the iid interarrival times and $\{v_j^r(k)\}_{k\in \N}$ be the associated iid job-sizes. For each $r$, the random variables in the collection $\{u_j^r(k), v_j^r(k), k \in \NN, j \in \AAA_J\}$
are taken to be mutually independent.  We assume that these random variables have finite second moment and let $\alpha_j^r = 1/E[u_j^r(1)]$ and $\beta_j^r = 1/E[v_j^r(1)]$ denote the rates of the interarrival times and job sizes. Also let $\sigma^{u,r}_j$ and $\sigma^{v,r}_j$ denote the standard deviations of $u_j^r(1)$ and $v_j^r(1)$, respectively. We assume a first in, first out (FIFO) policy, meaning that for each job type the oldest job in the queue is processed before another one is started.  Note that if the job sizes are exponentially distributed the `memoryless' property implies that the way the processing rate is distributed among jobs in a particular queue has no impact on the queue length distribution so in that case we can drop FIFO assumption without impacting the results.  We now introduce our main assumptions.

\subsection{Assumptions}\label{sec.assump}
In addition to the  assumptions of mutual independence among interarrival times and job sizes discussed above, we assume the following. 

\begin{condition}\label{cond:uniformintegrability} $P(u_j^r(1) > 0 ) = P(v_j^r(1) > 0) = 1$ for all $r \in \N$ and $j \in \AAA_J$. Furthermore, for each $j$, $\{u_j^r(1)^2\}_{r\in\N}$ and $\{v_j^r(1)^2\}_{r\in\N}$ are uniformly integrable. 
\end{condition}
%
In fact, for notational convenience (and without loss of generality) we will assume that $u_j^r(l)>0$ and $v_j^r(l)>0$ for all $r,l\in\mathbb{N}$ and $j\in\AAA_J$.

Let $\rho^r_j = \alpha^r_j/\beta^r_j$ for all $j\in \AAA_J$ and $\rho^r = (\rho^r_1, \ldots, \rho^r_J)$. The following will be our main heavy traffic condition: 

\begin{condition}\label{heavytraffic} (Heavy Traffic.)
	For each $j \in \AAA_J$, there exist $\alpha_j, \beta_j \in (0, \infty)$ and $\bar \alpha_j, \bar \beta_j \in \RR $ such that 
	$\lim_{r\to\infty} r(\alpha^r_j - \alpha_j) = \bar \alpha_j$, $\lim_{r\to\infty} r(\beta^r_j - \beta_j) = \bar \beta_j$, 
and there exist $\sigma^u_j, \sigma^v_j \in (0, \infty)$ such that 
$\lim_{r\to\infty} \sigma^{u,r}_j = \sigma^u_j$, $\lim_{r\to\infty} \sigma^{v,r}_j = \sigma^v_j$. 

Furthermore, with $\rho_j = \alpha_j/\beta_j$ for each $j\in \AAA_J$ and $\rho = (\rho_1, \ldots, \rho_J)$,
$C = K\rho$.

\end{condition}
Define $\theta\doteq K\eta$, where
\begin{equation}
\label{eq:etaTheta}
\eta_j \doteq \frac{\bar \alpha_j \beta_j - \alpha_j \bar \beta_j}{\beta_j^2}, \qquad j\in\AAA_J,
\end{equation}
and $\eta=(\eta_1,\ldots,\eta_J)$ and note that Condition \ref{heavytraffic} implies that
\[
	\lim_{r\to\infty} r(\rho^r-\rho) = \eta.
\] 
The quantity $\theta$ arises when considering the workload process given below in \eqref{eq:workloadprocess} and plays an important role in the Brownian control problems introduced in Section \ref{sec:2}.  Throughout we assume that $\theta <0$. This is a necessary and sufficient condition for the reflected Brownian motion that arises from HGI performance in the Brownian control problem to have a unique stationary distribution (see \cite{harwil1}). 
As we will see in Proposition \ref{prop:thetaNegFinCost} and Theorem \ref{thm:monotsoln}, $\theta<0$ is also necessary and sufficient for a finite ergodic cost.

We will assume the following local traffic condition which says that every resources has at least one associated job-type which only requires processing from that particular resource.  This condition guarantees that the workload process (see \eqref{eq:workloadprocess}) can achieve all vectors in $\mathbb{R}_+^I$.
\begin{condition} (Local Traffic.)
\label{cond:locTraffic}
For every $i\in\AAA_I$ there exists $j\in\AAA_J$ such that $K_{i,j}=1$ and $K_{l,j}=0$ for all $l\in\AAA_I \setminus \{ i\}$.
\end{condition}

Conditions \ref{cond:uniformintegrability} --\ref{cond:locTraffic} will be assumed throughout and will not be noted explicitly in the statements of various results.
\subsection{State processes}
\label{sec:stateProc}

Define the collection of renewal processes 
\[
	A_j^r(t) = \max\left\{\ell \in \N : \sum_{i=1}^\ell u_j^r(i) \le t \right\}, \qquad j \in \AAA_J,\; t \ge 0, 
\]
and 
\[
	S_j^r(t) = \max \left\{ \ell\in \N : \sum_{i=1}^\ell v_j^r(i) \le t \right\}, \qquad j \in \AAA_J,\; t \ge 0. 
\]

A control policy in the $r$-th network is an $\mathbb{R}_+^J$-valued stochastic process $B^r(\cdot)$ satisfying certain feasability and measurability conditions described in Definition \ref{def:admissible}.  The quantity $B^r_j(t)$ represents the cumulative amount of capacity allocated to type $j$ jobs at time $t$.  
We denote by $Q^r_j(t)$, $j \in \AAA_J$, the number of type-$j$ jobs in the queue at time instant $t$.  The $J$-dimensional queue length process is given by
\begin{equation}
\label{eq:Qdef}
	Q^r(t) \doteq q^r + A^r(t) - S^r(B^r(t)), \qquad t\ge 0,
\end{equation}
where $q^r \in \N^J$ denotes the initial queue length vector, $A^r(t)$ is the number of jobs that have arrived by time $t$, and $S^r(B^r(t))$ is the number of jobs that have been processed by time $t$ under the allocation policy $B^r(\cdot)$. Here, we are using the notational shorthand 
\[
	S^r(B^r(t)) =  (S_1^r(B_1^r(t)), S_2^r(B_2^r(t)), \ldots, S_J^r(B_J^r(t))). 
\]
 Recall that $C_i$ indicates the capacity of resource $i$.  We define the $I$-dimensional process 
\begin{equation}
\label{eq:Udef}
	U^r(t) \doteq tC - KB^r(t), 
\end{equation}
which gives the unused capacity of each resource at time $t$.  Let $M^r$ denote the $J\times J$ diagonal matrix with entries $\{1/\beta^r_j\}_{j\in\AAA_J}$, and let $M$ denote the $J\times J$ diagonal matrix with entries $\{1/\beta_j\}_{j\in\AAA_J}$.  The $I$-dimensional workload process $W^r$, which gives the amount of work in the system for each resource, is defined by 
\begin{equation}\label{eq:workloadprocess}
	W^r(t) \doteq KM^rQ^r(t) = w^r+KM^rA^r(t)-KM^rS^r(B^r(t)), \qquad  t \geq 0
\end{equation}
where $w^r\doteq KM^r q^r$.  

 In order to study the behavior as the systems approach criticality we consider two types of scaling: diffusion scaling and fluid scaling. In both of these scalings,
time is accelerated by a factor of $r^2$ but in diffusion scaling the magnitude is scaled down by a factor of $r$ while in the fluid scaling the magnitude is scaled down by a factor of $r^2$.  
Processes using the diffusion scaling will be denoted with a `hat' symbol while processes with the fluid scaling will be denoted with a `bar' symbol.  We define
\begin{align*}
\bar{A}^r(t) &= r^{-2}A^r(r^2t), & \bar{S}^r(t) &= r^{-2}S^r(r^2t), \\
\bar{Q}^r(t) &= r^{-2}Q^r(r^2t), & \bar{W}^r(t) &= r^{-2}W^r(r^2t), \\
\bar{B}^r(t) &= r^{-2}B^r(r^2t), & \bar{U}^r(t) &= r^{-2}U^r(r^2t), 
\end{align*}
and
\begin{align*}
\hat{A}^r(t) &= r^{-1}\left(A^r(r^2t) - r^2t \alpha^r\right), &\hat{S}^r(t) &= r^{-1}\left(S^r(r^2t) - r^2t\beta^r\right), \\
\hat{Q}^r(t) &= r^{-1}Q^r(r^2t), & \hat{W}^r(t) &= r^{-1}W^r(r^2t),\\
\hat{B}^r(t) &= r^{-1}B^r(r^2t), & \hat{U}^r(t) &= r^{-1}U^r(r^2t). 
\end{align*}
Our primary focus is on the diffusion scaling so for notational convenience we also define the diffusion scaled initial queue lengths and workloads, $\hat{q}^r\doteq \frac{1}{r}q^r$ and $\hat{w}^r\doteq KM^r\hat{q}^r$, as well as the process
\begin{equation}
\label{eq:XhatDef}
\hat{X}^r(t)\doteq KM^{r}\left( \hat{A}^{r}(t)-\hat{S}^{r}(\bar{B}^{r}(t))\right) +rtK(\rho
^{r}-\rho).
\end{equation}
The process $\hat X^r$ may be viewed as a `pre-Brownian motion' as we will see that, under conditions, its weak limit is given as a Brownian motion (with a drift).
Observe that the diffusion scaled workload process can be written as
\begin{align}
\label{eq:stateq1}
\hat{W}^{r}(t) &=KM^{r}\hat{q}^{r}+KM^{r}\left( \hat{A}^{r}(t)-\hat{S}^{r}(\bar{B}
^{r}(t))+r(\alpha^{r}t-\beta^{r}\bar{B}^{r}(t))\right)  \nonumber\\
&=KM^{r}\hat{q}^{r}+KM^{r}\left( \hat{A}^{r}(t)-\hat{S}^{r}(\bar{B}^{r}(t))\right) +Krt\rho
^{r}-rK\bar{B}^{r}(t) \nonumber\\
&=KM^{r}\hat{q}^{r}+KM^{r}\left( \hat{A}^{r}(t)-\hat{S}^{r}(\bar{B}^{r}(t))\right) +rtK(\rho
^{r}-\rho)+r(Ct-K\bar{B}^{r}(t))\nonumber\\
&= \hat{w}^r+\hat{X}^r(t)+\hat{U}^r(t)
\end{align}

\noindent where in the third equality we used $C=K\rho$ which comes from Condition \ref{heavytraffic}.

\subsection{Admissable control policies}

We will now make precise what one means by an admissible control policy.
We need to introduce the following multiparamater filtrations.
\begin{definition} For $m = (m_1, \ldots, m_J)\in\N^J$ and $n = (n_1, \ldots, n_J) \in \N^J$, let 
\[
	\sF^r(m, n) = \sigma\left\{ u_j^r(m'_j), v_j^r(n'_j) : m'_j \le m_j, n'_j\le n_j, j\in \AAA_J  \right\}.
\]
Then $\{\sF^r(m,n), m, n\in \N^J\}$ is a multiparameter filtration generated by the interarrival times and job-sizes with the following partial ordering: 
\[
	(m^1, n^1) \le (m^2, n^2) \;\;\text{if and only if}\;\; m_j^1\le m_j^2\;\;\text{and}\;\; n_j^1\le n_j^2\;\;\text{for all $j\in\AAA_J$}.
\]
Let 
\[
	\sF^r = \sigma\left\{ \bigcup_{(m,n)\in \N^{2J}} \sF^r(m,n) \right\}. 
\]
\end{definition}
We refer the reader to \cite[Section 6.2]{ethier2009markov} for definitions and properties of multiparameter stopping times.
We can now define the class of admissible control policies. Parts (ii)-(iii) are natural feasibility constraints while parts (iv)-(v) capture the fact that the control policies can only be based on information available until the current time instant. These are satisfied for a very broad family of natural control policies (cf.   \cite[Theorem 5.4]{budgho2}).

\begin{definition}\label{def:admissible} For $r \in \NN$, a $\mathbb{R}_+^J$-valued process $B^r$ is said to be an {\em admissible resource allocation policy} or an {\em admissible control policy} if it satisfies the following:
	\begin{enumerate}[(i)]
	\item $t\mapsto B^r(t)$ is an absolutely continuous, nonnegative, nondecreasing function on $[0,\infty)$, with $B^r(0) = 0$.
	\item $C \ge K\frac{d}{dt} B^r(t)$ for a.e. $t\ge 0$, a.s.
	\item The process $Q^r(\cdot)$ defined by \eqref{eq:Qdef} satisfies $Q^r(t) \ge 0$ for all $t\ge 0$ a.s.
	\item For each $r\in \N$ and $t\ge 0$ consider the $\mathbb{N}^{2J}$ valued random variable
	\begin{equation*}
	\tau^{r}(t)\doteq (\tau^{r,A}_{1}(t),\ldots,\tau^{r,A}_{J}(t),\tau^{r,S}_{1}(t),\ldots,\tau^{r,S}_{J}(t))
	\end{equation*}
	where, for  $j\in\mathbb{N}^{J}$,
	\begin{equation*}
	 \tau^{r,A}_{j}(t)=\min\left\{n\geq 0: \sum_{l=1}^{n}u^{r}_{j}(l)\geq r^{2}t\right\}, 
	\end{equation*}
	and
	\begin{equation*}
	 \tau^{r,S}_{j}(t)=\min\left\{n\geq 0: \sum_{l=1}^{n}v^{r}_{j}(l)\geq r^{2}\bar{B}^{r}_{j}(t)\right\}.
	\end{equation*}
	 Then $\tau^{r}(t)$ is a (multiparameter) $\{\mathcal{F}^{r}(n,m)\}$-stopping time for all $t\geq 0$.
	\item If we define the filtration 
	\begin{align*}
		\sG^r(t) &= \sF^r(\tau^r(t)) \\
			    &= \sigma\left\{ A\in \sF^r : A\cap \{\tau^r(t) \le (m,n)\} \in \sF^r(m,n)\;\;\text{for all}\;\;(m,n) \in \N^{2J}\right\}, 
	\end{align*}
	then $B^r(r^2t)$ is $\{\sG^r(t)\}$-adapted. 
	\end{enumerate}
	The collection of admissible controls for the $r$th network is denoted $\mathcal{A}^r$.  A sequence of control policies $\{B^r\}_{r\in\mathbb{N}}$ is called admissible  if, for each $r\in\mathbb{N}$, $B^r\in \mathcal{A}^r$. Denote the class of all such admissible sequences  as $\mathcal{A}$. 
\end{definition}

We note that the requirement in Definition \ref{def:admissible} part (ii) implies that for any admissible control policy $B^r$, the process $U^r(t) = tC - KB^r(t)$ is nonnegative, nondecreasing, and for any $0\leq s\le t$, 
\[ 
	0 \le K(B^r(t) - B^r(s)) = (t-s)C - (U^r(t) - U^r(s)) \le (t-s)C. 
\]
Recall that for each $j\in\AAA_J$ there is an $i\in\AAA_I$ such that $K_{ij} = 1$. Thus, for all $j\in\AAA_J$ and $0\leq s\leq t$ we have
\begin{equation}\label{eq:CLip}
	0\le B^r_j(t) - B^r_j(s) \le L(t-s), \qquad L = \max_{1\le i \le I}C_i, 
\end{equation}
i.e.,  $B^r$ is Lipschitz continuous with Lipschitz constant not depending on $r$. 

For the rest of this work we will only consider admissible control policies and so frequently the adjective `admissible' will be omitted.

\subsection{Cost function}

We now introduce the cost function of interest. We consider linear `holding costs' given through a fixed, strictly positive $J$-dimensional vector $h$.  In the $r$-th network, consider the long-term cost per unit time (or the ergodic cost) associated with a control policy $B^r$, defined as
\begin{equation}
	\label{eq:ergcoscri}
	J^r_E(B^r) = \limsup_{T\to \infty}  E \left[ \frac{1}{T}\int_0^T  h \cdot \hat{Q}^r(t) \,dt \right]. 
\end{equation}
For a sequence of control policies $\{B^r\}_{r\in\mathbb{N}}$, the associated  ergodic asymptotic cost is defined as 
\begin{equation}\label{asymptoticcost}
	J_E(\{B^r\}) = \liminf_{r\to\infty} J^r_E(B^r). 
\end{equation}
The infimum of the asymptotic ergodic cost over all admissible sequences of control policies will be referred to as
the {\em asymptotic value function} for the ergodic control problem and is given as 
\begin{equation}
\label{eq:asymValueFcn}
	 J^*_E = \inf_{\{B^r\} \in \sA} J_E(\{B^r\}). 
\end{equation}

\section{The Brownian control problem and its workload formulation}
\label{sec:2}
The main result of this work  gives a lower bound on the asymptotic ergodic control cost in terms of the 
value function of a certain control problem for Brownian motions \cite{har1, har2, harvan}. We  present below the {\em Equivalent Workload Formulations} (EWF) of these control problems. We refer the reader to \cite{harvan} for a discussion on equivalence between this formulation and the Brownian control problems as formulated in Harrison \cite{harrison1988brownian}.
We begin by introducing the notion of an effective cost function. 
For each $w\in \RR_+^I$, define the effective cost function as
\begin{equation}\label{eq:hhat}
	\hat{h}(w) = \min\{h \cdot q : KMq = w, q\ge 0\}. 
\end{equation}
Note that, from the local traffic condition (Condition \ref{cond:locTraffic}), the set on the right side is nonempty for every $w \in \RR_+^I$.
It is known that we can select a continuous minimizer in the above linear program (cf. \cite{boh1}), i.e. there is a continuous map $q^*:\RR_+^I \to \R_+^J$ such that 
\[
	q^*(w) \in \argmin_q\{h \cdot q : KMq = w, q\ge 0\}.
\]
Recall 
$\theta \doteq K \eta$ from \eqref{eq:etaTheta}
and let $\Sigma$ denote the $I \times I$ matrix  
\begin{equation}\label{eq:Sigma}
	\Sigma\doteq KM(\Sigma^u+\Sigma^v R)^{1/2} (KM(\Sigma^u+\Sigma^v R)^{1/2})^T, 
\end{equation}
where $\Sigma^u$ is the $J\times J$ diagonal matrix with entries $\alpha_j^3(\sigma^u_j)^2$, $\Sigma^v$ is the $J \times J$ diagonal matrix with entires $\beta_j^3(\sigma^v_j)^2$, and $R$ is the $J \times J$ diagonal matrix with entries $\rho_j$.  
The EWF and the associated controls and state processes are defined as follows. 
%

\begin{definition} 
	\label{def:BCP2}
Let $(\tilde{\Omega}, \tilde{\sF}, \tilde{P}, \{\tilde{\sF}(t)\})$ be a filtered probability space which supports an $I$-dimensional $\tilde{\sF}(t)$-Brownian motion $\tilde{X}$ with drift $\theta$ and covariance matrix $\Sigma$.
	An $I$-dimensional $\{\tilde{\sF}(t)\}$-adapted process $\tilde{U}$  on this space is called an admissible control for the ergodic EWF if there is a $\{\tilde{\sF}(t)\}$-adapted $\RR_+^I$-valued process $\tilde{W}$ such that the following hold $\tilde{P}$-a.s.: 
	\begin{enumerate}[(i)]
	\item $\tilde{W}(t) = \tilde{W}(0) +\tilde{X}(t) +  \tilde{U}(t)$ for all $t\ge 0$, 
	\item $\tilde{U}(t)$ is nondecreasing and $\tilde{U}(0) = 0$, 
	\item $\tilde W$ is stationary, namely, for all $t \ge 0$,
$\tilde{W}(t+\cdot)$ has the same distribution on $\cld^I_+$ as
$\tilde{W}(\cdot)$.
	\end{enumerate}
Denote the class of all such admissible controls as $\tilde{\sA}_E$. 
\end{definition}
The cost for a control $\tilde{U} \in \tilde{\sA}_E$ in the ergodic cost Brownian control problem (BCP) is defined as
\begin{equation}
	\tilde{J}^{BCP}_{E}(\tilde{U}) =  \tilde{E}[\hat h (\tilde{W}(0))], 
\end{equation}
where $\tilde{E}$ denotes expectation on $(\tilde{\Omega}, \tilde{\mathcal{F}}, \tilde{P})$. 
The corresponding value function is
\begin{equation}
\label{eq:valueFcnDef}
\tilde{J}^{BCP,*}_{E} \doteq
\begin{cases}
	\infty \text{ if }\tilde{\sA}_E=\emptyset\\
	 \inf_{\tilde{U}\in \tilde{\sA}_E} \tilde{J}^{BCP}_E(\tilde{U})\text{ otherwise,}
\end{cases}
\end{equation}
where the infimum above is also taken over all filtered probability spaces as in Definition \ref{def:BCP2}.
The following proposition shows that $\theta<0$ is a necessary condition for $\tilde{J}^{BCP,*}_E<\infty$. The sufficiency of this condition is a consequence of Theorem \ref{thm:monotsoln} given below.
Before presenting this result we recall the definition of the Skorokhod map.
\begin{definition}
	\label{def:smsp}
	Let $\psi \in \mathcal{D}^I$ such that $\psi(0)\in \RR^I_+$. The pair $(\varphi,\eta) \in \mathcal{D}^{2I}$ is said to solve the {\em Skorokhod problem} for $\psi$ (in $\RR^I_+$, with normal reflection)
	if $\varphi = \psi+\eta$; $\varphi(t)\in \RR^I_+$ for all $t \ge 0$; $\eta(0)=0$; $\eta$ is nondecreasing and $\int_{[0,\infty)} 1_{\{\varphi_i(t)>0\}} \,d\eta_i(t) =0$ for all $i \in \NN_I$.
	We write $\varphi = \Gamma(\psi)$ and refer to $\Gamma$ as the $I$-dimensional {\em Skorokhod map}.
\end{definition}
It is  known that there is a unique solution to the above Skorokhod problem for every $\psi \in \mathcal{D}^I$ with $\psi(0)\in \RR^I_+$.  When $I=1$, we will denote $\Gamma$ as $\Gamma_1$.
\begin{proposition}
\label{prop:thetaNegFinCost}
If $\theta_i\geq 0$ for some $i\in \AAA_I$ then  $\tilde{J}^{BCP,*}_E=\infty$
\end{proposition}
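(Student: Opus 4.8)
The plan is to show that if $\theta_i \ge 0$ for some coordinate $i$, then no admissible control $\tilde U \in \tilde{\sA}_E$ can exist, so that $\tilde{\sA}_E = \emptyset$ and hence $\tilde J^{BCP,*}_E = \infty$ by the first case in \eqref{eq:valueFcnDef}. Suppose for contradiction that $\tilde U \in \tilde{\sA}_E$ with associated stationary $\tilde W$. First I would project onto the $i$-th coordinate: write $\tilde W_i(t) = \tilde W_i(0) + \tilde X_i(t) + \tilde U_i(t)$, where $\tilde X_i$ is a one-dimensional Brownian motion with drift $\theta_i \ge 0$ and variance $\Sigma_{ii} t$, and $\tilde U_i$ is nondecreasing with $\tilde U_i(0)=0$. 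Since $\tilde W_i(t) \ge 0$ always, we have $\tilde U_i(t) \ge -\tilde W_i(0) - \tilde X_i(t)$, and in particular $\tilde W_i(t) \ge \tilde W_i(0) + \tilde X_i(t)$ pointwise because $\tilde U_i \ge 0$.

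The key step is to exploit stationarity to derive a contradiction. By stationarity, $\tilde E[\tilde W_i(t)] = \tilde E[\tilde W_i(0)] =: \mu$ for all $t$, and I would first need to argue $\mu < \infty$ — this may require a short argument, e.g. truncation, since stationarity alone gives equality of (possibly infinite) expectations; alternatively one can work with the distribution directly and avoid expectations. The cleanest route: taking expectations in $\tilde W_i(t) = \tilde W_i(0) + \tilde X_i(t) + \tilde U_i(t)$ (justified by monotone/truncation arguments, using that $\tilde X_i$ has mean $\theta_i t$ and $\tilde U_i$ is nondecreasing hence has well-defined, possibly infinite, expectation), one gets $\tilde E[\tilde U_i(t)] = -\theta_i t \le 0$, forcing $\tilde E[\tilde U_i(t)] = 0$, so $\tilde U_i \equiv 0$ a.s. (since $\tilde U_i \ge 0$ nondecreasing). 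But then $\tilde W_i(t) = \tilde W_i(0) + \tilde X_i(t)$, which is a Brownian motion with nonnegative drift that is constrained to be nonnegative for all $t\ge 0$ — and a driftless-or-positive-drift Brownian motion started at any point is unbounded below a.s. on $[0,\infty)$ (it hits $-1$ with probability one). This contradicts $\tilde W_i \ge 0$.

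Alternatively, if the expectation-manipulation route proves delicate, I would instead use the distributional form of stationarity directly: for each $t$, $\tilde P(\tilde W_i(t) \le a) = \tilde P(\tilde W_i(0) \le a)$, while $\tilde W_i(t) \ge \tilde W_i(0) + \tilde X_i(t) \Rightarrow_{} -\infty$ in probability (indeed a.s.) as $t \to \infty$ when $\theta_i \ge 0$... no — one needs a slightly different comparison here, namely comparing against the running minimum: $\tilde U_i(t) \ge \max_{s \le t}(-\tilde W_i(0) - \tilde X_i(s))^+$, hence $\tilde W_i(t) \ge \tilde X_i(t) - \min_{s\le t}\tilde X_i(s) + (\text{boundary terms})$, which by the reflection/Skorokhod comparison has the distribution of $\Gamma_1$ applied to $\tilde W_i(0) + \tilde X_i(\cdot)$ evaluated at $t$; this is reflected Brownian motion with nonnegative drift, which is transient and $\to \infty$, contradicting stationarity of $\tilde W_i(0)$. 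Either way the contradiction is that a stationary distribution cannot exist.

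The main obstacle I anticipate is the rigorous justification of taking expectations when $\tilde E[\tilde W_i(0)]$ is not a priori known to be finite, and making the "nonnegative-drift Brownian motion cannot stay nonnegative / reflected version is transient" comparison precise via the Skorokhod map $\Gamma_1$ (Definition \ref{def:smsp}). I would handle this by the standard fact that $\tilde W_i = \Gamma_1(\tilde W_i(0) + \tilde X_i)$ is the minimal nonnegative process dominating $\tilde W_i(0)+\tilde X_i$, so $\tilde W_i(t) \ge \Gamma_1(\tilde W_i(0)+\tilde X_i)(t)$, reducing everything to the well-known transience of one-dimensional reflected Brownian motion with drift $\theta_i \ge 0$, which is incompatible with $\tilde W_i(t)$ being stationary (a tight family of laws).
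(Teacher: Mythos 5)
Your fallback route --- bounding $\tilde W_i(t)\ge \Gamma_1(\tilde W_i(0)+\tilde X_i)(t)$ by minimality of the one-dimensional Skorokhod map and then noting that reflected Brownian motion with drift $\theta_i\ge 0$ escapes to $+\infty$ in probability, contradicting stationarity (tightness) of the laws of $\tilde W_i(t)$ --- is exactly the paper's argument, so the proposal is correct and takes essentially the same approach. Your primary expectation-based route does have the gap you flag (one cannot subtract when $\tilde E[\tilde W_i(0)]=\infty$), and the only other quibble is that for $\theta_i=0$ the reflected process is null recurrent rather than transient, but $P(\Gamma_1(w+\tilde X_i)(t)\ge K)\to 1$ still holds, which is all that is needed.
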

\begin{proof}
Let $i\in\AAA_I$ satisfy $\theta_i\geq 0$ and assume $\tilde{\sA}_E\neq \emptyset$.  Let $\tilde{U}\in \tilde{\sA}_E$ be an arbitrary admissible control on a filtered probability $(\tilde{\Omega}, \tilde{\sF}, \tilde{P}, \{\tilde{\sF}(t)\})$ with $\tilde{X}$ and $\tilde{W}$ as in Definition \ref{def:BCP2}.  Then from well known minimality properties of the Skorokhod map (cf. Proposition 5.1 in \cite{budjoh2}) we have
\begin{equation*}
\tilde{W}_i(t)=\tilde{W}_i(0)+\tilde{X}_i(t)+\tilde{U}_i(t)\geq \Gamma_1\left(\tilde{W}_i(0)+\tilde{X}_i(t)\right).
\end{equation*}
Since $\tilde{X}_i(\cdot)$ is a one-dimensional Brownian motion with drift $\theta_i\geq 0$, we have, for any $w\in \mathbb{R}_+$ and 
$K<\infty$
\begin{equation*}
\lim_{t\rightarrow\infty}P\left( \Gamma_1\left(w+\tilde{X}_i(\cdot)\right)(t) \geq K\right)=1.
\end{equation*}
Since for all $w_1,w_2\in \mathbb{R}_+$ and $t\geq 0$ we have
\begin{equation*}
\sup_{s\in [0,t]}\bigg| \Gamma_1\left(w_1+\tilde{X}_i(\cdot)\right)(t)- \Gamma_1\left(w_2+\tilde{X}_i(\cdot)\right)(t) \bigg|\leq 2 |w_1-w_2|
\end{equation*}
and $\tilde{X}(\cdot)$ is independent of $\tilde{W}(0)$ it follows that
\begin{equation*}
\lim_{t\rightarrow\infty}P\left( \tilde{W}_i(t) \geq K\right)\geq\lim_{t\rightarrow\infty}P\left( \Gamma_1\left(\tilde{W}_i(0)+\tilde{X}_i(\cdot)\right)(t) \geq K\right)=1
\end{equation*}
for all $K<\infty$.  This contradicts the that fact $\tilde{W}(t)$ has the same distribution as $\tilde{W}(0)$ for all $t\geq 0$. The result follows.
\end{proof}
Obtaining explicit simple form solutions for the control problems in Definition \ref{def:BCP2} is in general impossible.
However, there is one important setting, given in Theorem \ref{thm:monotsoln} below, where explicit solutions are available.  Proof  of the first statement follows from \cite{harwil1}, while the second statement is a consequence of the well known exponential integrability of the stationary distribution (cf. \cite{budlee2}). Final statement in the theorem follows from standard minimality properties of the Skorokhod map with normal reflections on the domain $\RR_+^I$ (cf. \cite{harwil1}).  Proof is omitted.
\begin{theorem}
	\label{thm:monotsoln}
	Let $(\tilde{\Omega}, \tilde{\sF}, \tilde{P}, \{\tilde{\sF}(t)\})$ and $\tilde{X}$ be as in Definition \ref{def:BCP2}.
	Suppose in addition that $\theta <0$. Then there is a unique stationary distribution $\pi$ for the Markov process described by 
	\begin{equation}\label{eq:wstar}
		\ch W(t) \doteq \Gamma({w} + \tilde X)(t),
		\qquad t \ge 0, \; w \in \R_+^I.
		\end{equation}
	Assume without loss of generality that the filtered probability space $(\tilde{\Omega}, \tilde{\sF}, \tilde{P}, \{\tilde{\sF}(t)\})$ supports an $\tilde{\sF}_0$-measurable $\RR_+^I$-valued random variable $\ch W'(0)$ with distribution $\pi$, and let
	$\{\ch W'(t)\}$ be the stationary process defined as
	\begin{equation}\label{eq:wstarstat}
		\ch W'(t) \doteq \Gamma(\ch W'(0) + \tilde X)(t) = \ch W'(0)+ \tilde X(t) + \ch U'(t), \qquad  t \ge 0.
		\end{equation}
		Then $\ch U' \in \tilde \cla_E$ and
		$$\tilde J^{BCP,*}_{E} \le \tilde J^{BCP}_{E}(\ch U') = \int_{\RR_+^I}\hat h(w) \pi(dw) <\infty .$$
		Suppose in addition that $\hat h$ is monotonically nondecreasing, namely if $w_1, w_2 \in \RR_+^I$ satisfy $w_1 \le w_2$ then
		$\hat h(w_1) \le \hat h(w_2)$.	
	Then 
	$$\tilde J^{BCP,*}_{E} = \tilde J^{BCP}_{E}(\ch U') = \int_{\RR_+^I}\hat h(w) \pi(dw).$$
\end{theorem}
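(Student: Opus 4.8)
The plan is to prove the lower bound matching the inequality $\tilde{J}^{BCP,*}_{E}\le \tilde{J}^{BCP}_{E}(\ch U')=\int_{\RR_+^I}\hat h(w)\,\pi(dw)$ already recorded in the theorem. Concretely, I would show that for every admissible control $\tilde U\in\tilde{\sA}_E$, on every filtered probability space as in Definition \ref{def:BCP2}, one has $\tilde{J}^{BCP}_{E}(\tilde U)=\tilde E[\hat h(\tilde W(0))]\ge\int_{\RR_+^I}\hat h\,d\pi$. Since the right-hand side does not depend on the space, taking the infimum over $\tilde U$ and over all such spaces gives $\tilde{J}^{BCP,*}_{E}\ge\int_{\RR_+^I}\hat h\,d\pi$; as $\ch U'\in\tilde{\sA}_E$ attains this value, all three quantities in the claim coincide.

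So fix $\tilde U\in\tilde{\sA}_E$ with associated stationary state process $\tilde W(t)=\tilde W(0)+\tilde X(t)+\tilde U(t)$. First I would invoke the minimality property of the Skorokhod map (cf.\ Proposition 5.1 in \cite{budjoh2}), applied coordinatewise exactly as in the proof of Proposition \ref{prop:thetaNegFinCost} (using that $\tilde U$ is nondecreasing with $\tilde U(0)=0$, that $\tilde W\ge0$ with $\tilde W(0)\in\RR_+^I$, and that normal reflection on the orthant decouples so $\Gamma(\cdot)_i=\Gamma_1((\cdot)_i)$), to get
\[
\tilde W(t)\ \ge\ \Gamma\big(\tilde W(0)+\tilde X\big)(t)\qquad\text{componentwise},\ \tilde P\text{-a.s., for all }t\ge0 .
\]
By the assumed monotonicity of $\hat h$ this yields $\hat h(\tilde W(t))\ge\hat h\big(\Gamma(\tilde W(0)+\tilde X)(t)\big)$ a.s. Taking expectations and using stationarity of $\tilde W$ together with continuity of $\hat h$ (so that $\tilde E[\hat h(\tilde W(t))]=\tilde E[\hat h(\tilde W(0))]$ as an identity in $[0,\infty]$), and then averaging over $t\in[0,T]$ (Tonelli), I obtain
\[
\tilde E[\hat h(\tilde W(0))]\ \ge\ \tilde E\!\left[\frac1T\int_0^T \hat h\big(\Gamma(\tilde W(0)+\tilde X)(t)\big)\,dt\right],\qquad T>0 .
\]

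To finish, I would let $T\to\infty$. For each fixed $w\in\RR_+^I$, $\{\Gamma(w+\tilde X)(t)\}_t$ is the reflected Brownian motion \eqref{eq:wstar}, which under $\theta<0$ is positive Harris recurrent with unique stationary law $\pi$ (standard; see \cite{harwil1}, noting that $\Sigma$ is nondegenerate by Condition \ref{cond:locTraffic} and the positivity of the limiting parameters, and that the reflection is normal). Since $\int_{\RR_+^I}\hat h\,d\pi<\infty$ by the second assertion of the theorem, the ergodic theorem gives $\frac1T\int_0^T\hat h(\Gamma(w+\tilde X)(t))\,dt\to\int_{\RR_+^I}\hat h\,d\pi$ $\tilde P$-a.s., for every $w$. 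Because $\tilde W(0)$ is $\tilde{\sF}(0)$-measurable and $\tilde X$ is an $\{\tilde{\sF}(t)\}$-Brownian motion, $\tilde W(0)$ and $\tilde X$ are independent; conditioning on $\tilde W(0)$ and using Fubini, the same a.s.\ convergence persists with $w$ replaced by $\tilde W(0)$. Applying Fatou's lemma to the last display then gives $\tilde E[\hat h(\tilde W(0))]\ge\int_{\RR_+^I}\hat h(w)\,\pi(dw)$, completing the proof.

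The step I expect to be the only nonroutine one is this final passage to the limit. It is tempting to use instead the marginal convergence $\Gamma(w+\tilde X)(t)\Rightarrow\pi$ and try to pass it through the expectation directly, but $\hat h$ is unbounded with linear growth, so this would require a uniform-in-$t$ integrability bound for $\hat h$ along the reflected path started from the random point $\tilde W(0)$, which is delicate given that an admissible $\tilde W(0)$ is only known to have a finite first moment. Routing through Cesàro time-averages and the ergodic theorem sidesteps this entirely: a.s.\ convergence plus Fatou's lemma is all that is needed. The remaining ingredients — Skorokhod minimality, the decoupling and Lipschitz regularity of the normal-reflection Skorokhod map, positive Harris recurrence of the reflected Brownian motion, and $\hat h\in L^1(\pi)$ — are all either already established in the excerpt or entirely standard.
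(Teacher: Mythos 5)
Your argument is correct and follows exactly the route the paper indicates for the final statement (which it otherwise omits, citing only ``standard minimality properties of the Skorokhod map with normal reflections''): coordinatewise Skorokhod minimality to dominate any admissible $\tilde W$ by the reflected process, monotonicity of $\hat h$, stationarity, and then a Cesàro average combined with the ergodic theorem for the positive Harris recurrent RBM and Fatou's lemma. Your observation that routing through time averages avoids the uniform-integrability issue one would face with marginal convergence to $\pi$ is apt, and the parts you take as given (uniqueness of $\pi$, $\ch U'\in\tilde\cla_E$, and $\hat h\in L^1(\pi)$) are precisely the parts the paper also delegates to \cite{harwil1} and \cite{budlee2}.
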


\section{Main result}
\label{sec:Main}

We now present the main result of this work.

\begin{theorem}\label{thm:main} 
The optimal cost under  the Brownian control problem provides a lower bound for the optimal asymptotic cost in the resource sharing network. Namely, 
 $J^*_E \ge \tilde J^{BCP,*}_E$
\end{theorem}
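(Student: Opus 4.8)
The plan is to take an arbitrary admissible sequence $\{B^r\}\in\sA$ with $J_E(\{B^r\})<\infty$ (otherwise there is nothing to prove) and extract, along a subsequence achieving the $\liminf$ in \eqref{asymptoticcost}, a controlled process in the ergodic EWF of Definition \ref{def:BCP2} whose cost is at most $J_E(\{B^r\})$; this immediately yields $J_E(\{B^r\})\ge \tilde J^{BCP,*}_E$ and, taking infimum over sequences, the theorem. The first step is to exploit the cost bound: since $J^r_E(B^r)$ is close to $J_E(\{B^r\})$ for large $r$ along the subsequence, there exist times $T_r\to\infty$ such that $\frac1{T_r}\int_0^{T_r} E[h\cdot\hat Q^r(t)]\,dt$ is bounded, and hence $\frac1{T_r}\int_0^{T_r}E[\hat h(\hat W^r(t))]\,dt$ is bounded as well (using $\hat h(\hat W^r(t))\le h\cdot\hat Q^r(t)$ from \eqref{eq:hhat}, since $\hat Q^r(t)$ is a feasible point). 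I would then pass to the stationary setting: either average the initial condition over $[0,T_r]$ to build a (near-)stationary version, or equivalently work with the random occupation measures on path space introduced in Definition \ref{def:occMeas}, viewing the shifted controls and workloads $\{(\hat W^r(\cdot+s),\hat U^r(\cdot+s),\ldots): s\in[0,T_r]\}$ sampled at a uniform random time. The decomposition \eqref{eq:stateq1}, $\hat W^r=\hat w^r+\hat X^r+\hat U^r$, together with the Lipschitz bound \eqref{eq:CLip} on $B^r$ (hence on $\hat U^r$ after rescaling, at fluid scale), is what keeps the control terms under control.

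The second step is tightness, and here I would follow the paper's stated strategy (Theorem \ref{thm:tightness}): rather than proving $C$-tightness of $\{\hat U^r\}$ in the Skorokhod space — which the cost bound does not give, since it only controls time-averages — I would prove tightness of the random path occupation measures in $\mathcal P(\sM^d)$ (or $\mathcal P$ of the relevant path space with the weaker vague/weak topology on the control component). Tightness of the $\hat X^r$ component follows from a functional CLT for renewal processes composed with the fluid-scaled controls; tightness of the $\hat W^r$ component follows from \eqref{eq:stateq1} plus the cost bound forcing $\hat W^r$ to spend most of its time in compact sets; and the occupation-measure viewpoint lets the (possibly wild) control $\hat U^r$ be handled as a measure. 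Along a weakly convergent subsubsequence, I would then invoke Theorem \ref{thm:conv} to identify the limit: its two supporting results, Propositions \ref{prop:ArrServBnds}(ii) and \ref{prop:FuildScaledAllocConv}(ii), show that time-averaged residual service times vanish and that the time-averaged discrepancy between the fluid-scaled control $\bar B^r$ and the nominal allocation $\rho\,\iota$ vanishes — these are precisely what is needed to show the limiting $\tilde X$ is a Brownian motion with drift $\theta$ and covariance $\Sigma$, that the limiting $\tilde U$ is nondecreasing with $\tilde U(0)=0$, and that $\tilde W=\tilde W(0)+\tilde X+\tilde U\ge 0$ holds. The uniform-in-random-time construction, together with the shift-invariance of the occupation measures, makes the limiting $\tilde W$ stationary in the sense of Definition \ref{def:BCP2}(iii), so $\tilde U\in\tilde\sA_E$.

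The third step is cost convergence: because $\hat h$ is continuous (a continuous minimizer $q^*$ exists) and nonnegative, lower semicontinuity under weak convergence gives $\tilde E[\hat h(\tilde W(0))]\le \liminf_r \frac1{T_r}\int_0^{T_r}E[\hat h(\hat W^r(t))]\,dt\le \liminf_r \frac1{T_r}\int_0^{T_r}E[h\cdot\hat Q^r(t)]\,dt = J_E(\{B^r\})$ along the subsequence; a Fatou/uniform-integrability argument (justified by the cost bound and the square-integrability assumptions in Condition \ref{cond:uniformintegrability}) upgrades the weak-convergence lower bound to the stated inequality. Then $\tilde J^{BCP,*}_E\le \tilde J^{BCP}_E(\tilde U)\le J_E(\{B^r\})$, and taking the infimum over $\{B^r\}\in\sA$ yields $J^*_E\ge \tilde J^{BCP,*}_E$.

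\textbf{Main obstacle.} The crux is the tightness/identification step in the ergodic regime: with only time-averaged control of the costs, $\hat U^r$ need not be tight as a path, so the entire argument must be carried out with occupation measures on the weaker space $\sM^d$, and one must show this coarser information still suffices to (a) identify the limiting driving noise as the correct Brownian motion — which is exactly where the new Propositions \ref{prop:ArrServBnds}(ii) and \ref{prop:FuildScaledAllocConv}(ii) replace the time-rescaling arguments of \cite{budcon1} — and (b) produce a genuinely \emph{stationary} limiting workload satisfying all of Definition \ref{def:BCP2}, rather than merely a process on a finite horizon. Handling the interaction between the random shift (needed for stationarity) and the renewal/CLT estimates (which are naturally uniform-in-time but must now be integrated against the occupation measure) is the delicate part.
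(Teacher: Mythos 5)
Your proposal is correct and follows essentially the same route as the paper's proof: near-optimal sequence, path occupation measures in the vague topology (Definition \ref{def:occMeas}), tightness and limit identification via Theorems \ref{thm:tightness} and \ref{thm:conv}, and passage to the limit in the cost (the paper implements your ``Fatou/lower-semicontinuity'' step via the truncations $\hat h_N$, a small time-average over $[0,z]$ with $z\to 0$, and monotone convergence in $N$, which avoids any uniform-integrability argument). No substantive differences.
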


\begin{remark} \upshape
	\label{rem:rem1}
	In \cite{budjoh2}, under the conditions assumed in the current work, the condition that $\theta<0$, and an additional exponential integrability assumption on the interrarrival times and job-sizes (see Condition 1 in \cite{budjoh2}), explicit threshold form admissible control policies $\{\ch {B}^{r}\}$ are constructed for which the following holds:
 If $\sup_r \hat{q}^r<\infty$ then $J_E^r(\ch B^{r}) \rightarrow \tilde J_E^{BCP}(\ch U')$. 
Here, $\ch U'$ is the reflected Brownian motion control defined in Theorem \ref{thm:monotsoln} for the ergodic cost Brownian control problem.  In view of Theorems \ref{thm:monotsoln} and \ref{thm:main}, we then have that under the conditions of \cite{budjoh2} and with the additional assumption that
	$\hat{h}$ is nondecreasing, the sequence of control policies $\{\ch B^{r}\}$ of \cite{budjoh2} is asymptotically optimal for the ergodic cost problem.  Namely, we have that
	$$ \tilde J^{BCP,*}_E \le J^*_E \le \liminf_{r\to \infty} J_E^r(\ch B^{r})\le \limsup_{r\to \infty} J_E^r(\ch B^{r}) = \tilde J^{BCP,*}_E,$$
equivalently, under these assumptions, $J^{*}_E = \tilde J^{BCP,*}_E$.
\end{remark}

The rest of this work is devoted to the proof of Theorem \ref{thm:main}. First, in Section \ref{sec:prelim}, we introduce some preliminary results and notation needed in the proof of the theorem.  This is divided  into  two subsections; Subsection \ref{sec:detRenewProc} which is primarily focused on the underlying renewal processes and Subsection \ref{sec:measSpaceDet} which focuses on the topology on the space $\sM^I$ that will be used when proving the tightness and convergences of the controls.  
In Section \ref{sec:erg} we prove our main result. 
Finally, Section \ref{sec:MovedProofs} contains the proofs of some supplementary results that were postponed to streamline the exposition.

\section{Preliminary results}
\label{sec:prelim}

In this section we give some preparatory  results.

\subsection{The renewal processes and cost function}
\label{sec:detRenewProc}

We first introduce some additional notation that will be used in the remainder of the paper.  In the proof of Theorem \ref{thm:main} we will consider increments of $\hat{X}^r$ after some time $t> 0$, i.e., $\hat{X}^r(t+\cdot)-\hat{X}^r(t)$, and we will be concerned with the extent to which these increments depend on $\sG^r(t)$.  To this end we introduce the following.
\begin{definition}
\label{def:xiAndUpsilon}
For  $t\in [0,\infty)$, $r\in \NN$, and $j\in \AAA_J$ define
\begin{align*}
\bar{\xi}^{A,r}_{j}(t)\doteq\frac{1}{r^{2}}\sum_{l=1}^{\tau^{r,A}_{j}(t)}u^{r}_{j}(l) \qquad \mbox{and} \qquad
\bar{\xi}^{S,r}_{j}(t)\doteq\frac{1}{r^{2}}\sum_{l=1}^{\tau^{r,S}_{j}(t)}v^{r}_{j}(l).
\end{align*}
Moreover, let 
\begin{equation*}
\bar{\Upsilon}^{A,r}_{j}(t)\doteq 
\bar{\xi}^{A,r}_{j}(t)-t \qquad\mbox{and}\qquad
\bar{\Upsilon}^{S,r}_{j}(t)\doteq 
\bar{\xi}^{S,r}_{j}(t)-\bar{B}_{j}^{r}(t),
\end{equation*}
and lastly define 
\begin{equation*}
 \hat{\Upsilon}^{A,r}_{j}(t)=r\bar{\Upsilon}^{A,r}_{j}(t) \qquad\mbox{and}\qquad 
\hat{\Upsilon}^{S,r}_{j}(t)=r\bar{\Upsilon}^{S,r}_{j}(t). 
\end{equation*}
\end{definition}
Observe that all the random variables in Definition \ref{def:xiAndUpsilon} are $\sG^r(t)$-measurable.
\begin{definition}
\label{def:newStartArrAndServ}
For $t \in \R_+$ and $j \in \AAA_J$, define
\begin{equation*}  
A^{r,t}_{j}(s)\doteq\max\left\{n\geq 0:\sum_{l=\tau^{r,A}_{j}(t)+1}^{\tau^{r,A}_{j}(t)+n}u^{r}_{j}(l)\leq s\right\},
\end{equation*}
and 
\begin{equation*}
S^{r,t}_{j}(s)\doteq\max\left\{n\geq 0:\sum_{l=\tau^{r,S}_{j}(t)+1}^{\tau^{r,S}_{j}(t)+n}v^{r}_{j}(l)\leq s\right\},
\end{equation*}
along with their diffusion-scaled versions
\begin{equation*} 
\hat{A}^{r,t}_{j}(s)\doteq\frac{1}{r}A^{r,t}_{j}(r^2s)
-rs\alpha^{r}_{j} \qquad \mbox{and}\qquad 
\hat{S}^{r,t}_{j}(s)\doteq\frac{1}{r}S^{r,t}_{j}(r^2s)-rs\beta^{r}_{j}.
\end{equation*} 
\end{definition}
The following lemma describes useful properties of $(\hat{A}^{r,t},\hat{S}^{r,t})$. The proof is standard using properties of stopping times and is therefore omitted.
\begin{lemma}
\label{lem:sameDist}
Let $t\in \RR_+$  and $r \in \NN$.
Then $(\hat{A}^{r,t},\hat{S}^{r,t})$ is independent of $\mathcal{G}^{r}(t)$, and $(\hat{A}^{r,t},\hat{S}^{r,t})$ has the same distribution as $(\hat{A}^{r},\hat{S}^{r})$.
\end{lemma}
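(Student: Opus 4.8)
\textbf{Proof plan for Lemma \ref{lem:sameDist}.}

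The plan is to reduce the assertion to the strong Markov property for multiparameter renewal sequences, applied at the multiparameter stopping time $\tau^r(t)$. First I would recall from Definition \ref{def:admissible}(iv) that $\tau^r(t) = (\tau^{r,A}_1(t),\ldots,\tau^{r,A}_J(t),\tau^{r,S}_1(t),\ldots,\tau^{r,S}_J(t))$ is an $\{\sF^r(m,n)\}$-stopping time, and that $\sG^r(t) = \sF^r(\tau^r(t))$ is the associated stopped $\sigma$-field as defined in Definition \ref{def:admissible}(v). The families $\{u^r_j(l)\}_{l\in\N}$ and $\{v^r_j(l)\}_{l\in\N}$, over all $j\in\AAA_J$, are mutually independent i.i.d. sequences, so the doubly-indexed array $\{(u^r_j(l),v^r_j(l'))\}$ generates a multiparameter filtration to which the renewal/strong-Markov machinery of \cite[Section 6.2]{ethier2009markov} applies.

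The key step is the following ``shift independence'' statement for i.i.d. sequences indexed by a multiparameter stopping time: if $\{Z_k\}_{k\in\N}$ is i.i.d. and $N$ is a stopping time for the filtration generated by $\{Z_k\}$, then the post-$N$ sequence $\{Z_{N+1}, Z_{N+2}, \ldots\}$ is independent of $\sigma\{Z_1,\ldots,Z_N\}$ and has the same law as $\{Z_1,Z_2,\ldots\}$; in the multiparameter case one applies this componentwise but simultaneously, using that the $2J$ coordinate sequences are mutually independent and that $\tau^r(t)$ is a stopping time for the joint filtration. Concretely, I would verify that the post-$\tau^r(t)$ arrays $\{u^r_j(\tau^{r,A}_j(t)+l)\}_{l\in\N}$ and $\{v^r_j(\tau^{r,S}_j(t)+l)\}_{l\in\N}$, for $j\in\AAA_J$, are jointly independent of $\sG^r(t)$ and have the same joint distribution as the original arrays $\{u^r_j(l)\}_{l\in\N}$, $\{v^r_j(l)\}_{l\in\N}$. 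Here I would invoke \cite[Section 6.2]{ethier2009markov} for the precise multiparameter version; the subtlety worth a sentence is that the stopping-time index $\tau^{r,S}_j(t)$ for the service sequence depends on $\bar B^r_j(t)$, which is $\sG^r(t)$-measurable by Definition \ref{def:admissible}(v), so this causes no circularity — it is exactly the kind of index that a multiparameter stopping time is allowed to be.

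Finally, I would observe that by their defining formulas in Definition \ref{def:newStartArrAndServ}, the processes $A^{r,t}_j$, $S^{r,t}_j$ — and hence their diffusion-scaled versions $\hat A^{r,t}_j$, $\hat S^{r,t}_j$ — are deterministic measurable functionals of precisely these post-$\tau^r(t)$ arrays, built by exactly the same recipe (counting renewals) that produces $A^r_j$, $S^r_j$, $\hat A^r_j$, $\hat S^r_j$ from the original arrays. Since a measurable function of a family independent of $\sG^r(t)$ is itself independent of $\sG^r(t)$, and a measurable function applied to an array carries distributional equality, both conclusions follow: $(\hat A^{r,t},\hat S^{r,t})$ is independent of $\sG^r(t)$, and $(\hat A^{r,t},\hat S^{r,t}) \overset{d}{=} (\hat A^r,\hat S^r)$. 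The main obstacle — really the only non-bookkeeping point — is stating the multiparameter strong-renewal property at $\tau^r(t)$ cleanly and citing it correctly; everything after that is just unwinding definitions, which is why the authors (and I) would omit the details.
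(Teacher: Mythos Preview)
Your proposal is correct and aligns with what the paper intends: the authors state that the proof ``is standard using properties of stopping times and is therefore omitted,'' and your plan---applying the multiparameter strong Markov/renewal property at the stopping time $\tau^r(t)$ (citing \cite[Section 6.2]{ethier2009markov}, the same reference the paper invokes for multiparameter stopping times) and then reading off both conclusions from the fact that $(\hat A^{r,t},\hat S^{r,t})$ is the same deterministic functional of the post-$\tau^r(t)$ arrays that $(\hat A^r,\hat S^r)$ is of the original arrays---is exactly that standard argument.
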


Note that we may write (cf. \cite[Equation 9]{budjoh2}), for all $j\in\AAA_J$ and $0\leq s<t$, 
\begin{align}
\hat{Q}^{r}_{j}(t)&=\hat{Q}^{r}_{j}(s)+r^{-1}\mathcal{I}_{\{t-s\geq \bar{\Upsilon}^{A,r}_{j}(s)>0\}}+r^{-1}A_{j}^{r,s}\left(r^2\left(t-s-\bar{\Upsilon}^{A,r}_{j}(s)\right)^{+}\right)\nonumber\\
&\quad -r^{-1}\mathcal{I}_{\{\bar{B}^{r}_{j}(t)- \bar{B}^{r}_{j}(s)\geq \bar{\Upsilon}^{S,r}_{j}(s)>0\}}-r^{-1}S_{j}^{r,s}\left(r^2\left(\bar{B}^{r}_{j}(t)-\bar{B}^{r}_{j}(s)- \bar{\Upsilon}^{S,r}_{j}(s)\right)^{+}\right).\label{eq:328}
\end{align}
In a similar fashion, for $0\le s\le t$,
\begin{equation}\label{eq:XhatDiff}
\begin{aligned}
\hat{X}^{r}(t)&=\hat{X}^{r}(s)+ KM^{r}\hat{A}^{r,s}\left(\left(t-s-\bar{\Upsilon}^{A,r}(s)\right)^{+}\right)-KM^{r}\hat{S}^{r,s}\left(\left(\bar{B}^{r}(t)-\bar{B}^{r}(s)-\bar{\Upsilon}^{S,r}(s)\right)^{+}\right) \\
&\quad+\frac{1}{r}KM^{r}\mathcal{I}_{\{t-s\geq\bar{\Upsilon}^{A,r}(s)>0\}}-\frac{1}{r}KM^{r}\mathcal{I}_{\{\bar{B}^{r}(t)-\bar{B}^{r}(s)\geq\bar{\Upsilon}^{S,r}(s)>0\}} \\
&\quad+r(t-s)K(\rho^{r}-\rho) -rK\rho^{r}\left((t-s)\wedge \bar{\Upsilon}^{A,r}(s)\right) +rK\left((\bar{B}^{r}(t)-\bar{B}^{r}(s))\wedge \bar{\Upsilon}^{S,r}(s)\right).
\end{aligned} 
\end{equation}

The proof of the following lemma provides some useful properties of the holding cost and the corresponding effective cost function on the workload.  Its proof is omitted because it follows easily from the local traffic condition (Condition \ref{cond:locTraffic}) and the fact that $h_j>0$ for all $j\in\AAA_J$.
\begin{lemma}
\label{lem:hHatBnds}
There exists a constant $c_h\in (1,\infty)$ such that for all $w\in\mathbb{R}_+^I$, $q\in \mathbb{R}^J_+$, and $r\in\mathbb{N}$ we have 
\begin{equation*}
c_h|w|\geq  \hat{h}(w)\geq c_h^{-1}|w|
\end{equation*}
and
\begin{equation*}
c_h|KM^r q|\geq h\cdot q \geq c_h^{-1} |KM^r q|
\end{equation*}
\end{lemma}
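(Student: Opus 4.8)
The plan is to read both displays as elementary ``equivalence of norms'' estimates, obtained by comparing the $\ell^1$ and $\ell^2$ norms on $\R^I$ and $\R^J$ together with the fact that the diagonal matrices $M$ and $M^r$ have entries bounded above and below uniformly in $r$. The first point to record is that by Condition~\ref{heavytraffic} we have $\beta^r_j \to \beta_j \in (0,\infty)$ for each $j \in \AAA_J$, and a convergent sequence of positive reals with positive limit is bounded away from both $0$ and $\infty$; hence there are constants $0 < \underline\beta \le \overline\beta < \infty$ with $\underline\beta \le \beta^r_j \le \overline\beta$ for all $r \in \NN$ and $j \in \AAA_J$, and also $\underline\beta \le \beta_j \le \overline\beta$. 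Throughout I will use that $(KM^r q)_i = \sum_{j} K_{ij} q_j / \beta^r_j$ and $(KMq)_i = \sum_j K_{ij} q_j / \beta_j$, with all summands nonnegative when $q \ge 0$.

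For the upper bound $\hat h(w) \le c_h |w|$, which is the one place the local traffic condition is genuinely needed, I would invoke Condition~\ref{cond:locTraffic} to pick, for each $i \in \AAA_I$, a job type $j(i)$ with $K_{i,j(i)} = 1$ and $K_{l,j(i)} = 0$ for $l \ne i$. These indices are distinct, since $j(i) = j(i')$ with $i \ne i'$ would force $K_{i',j(i)}$ to equal both $0$ and $1$. Define $q \in \R^J_+$ by $q_{j(i)} \doteq \beta_{j(i)} w_i$ for $i \in \AAA_I$ and $q_j \doteq 0$ for all other $j$; then $q \ge 0$, and since $K_{i,j(i')} = \mathbf{1}_{\{i = i'\}}$ one gets $(KMq)_i = \sum_{i'} K_{i,j(i')} w_{i'} = w_i$, so $q$ is feasible for the linear program in \eqref{eq:hhat}. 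Hence $\hat h(w) \le h \cdot q = \sum_i h_{j(i)} \beta_{j(i)} w_i \le \big( \max_i h_{j(i)} \beta_{j(i)} \big) \sum_i w_i \le \sqrt{I}\, \big( \max_i h_{j(i)} \beta_{j(i)} \big)\, |w|$, a bound of the required form.

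The remaining three bounds are the lower bound for $\hat h$ and the two-sided comparison of $h \cdot q$ with $|KM^r q|$, and all follow from the same two observations. First, since every job type uses at least one resource, for each $j \in \AAA_J$ there is $i_0 = i_0(j) \in \AAA_I$ with $K_{i_0 j} = 1$, whence $|KM^r q| \ge (KM^r q)_{i_0} \ge q_j / \beta^r_j \ge q_j / \overline\beta$; taking the maximum over $j$ gives $|KM^r q| \ge \overline\beta^{-1}\max_j q_j \ge (J\overline\beta)^{-1} \sum_j q_j$, and combining with $h \cdot q \le (\max_j h_j) \sum_j q_j$ yields $h \cdot q \le c_h |KM^r q|$. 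Second, $|KM^r q| \le \sum_i (KM^r q)_i = \sum_j (q_j/\beta^r_j) \sum_i K_{ij} \le (I/\underline\beta) \sum_j q_j \le \big( I/(\underline\beta \min_j h_j) \big)\, h \cdot q$, giving $h \cdot q \ge c_h^{-1} |KM^r q|$; running the same computation with $M$ in place of $M^r$ shows that for every $q$ feasible in \eqref{eq:hhat} one has $h \cdot q \ge c_h^{-1}|KMq| = c_h^{-1}|w|$, and taking the infimum over such $q$ gives $\hat h(w) \ge c_h^{-1}|w|$. It then suffices to let $c_h$ be the largest of the finitely many positive constants produced above, increased if necessary so that $c_h > 1$.

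There is no substantive obstacle here; the lemma is essentially bookkeeping. The only two points that deserve explicit justification are the uniform-in-$r$ bounds $0 < \underline\beta \le \beta^r_j \le \overline\beta < \infty$, which is exactly where Condition~\ref{heavytraffic} enters, and the distinctness of the dedicated job types $j(i)$, which is what makes the candidate $q$ actually satisfy $KMq = w$ in the proof of the $\hat h$ upper bound.
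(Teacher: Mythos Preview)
Your proof is correct and follows precisely the approach the paper indicates (the paper omits the proof, noting only that it ``follows easily from the local traffic condition (Condition~\ref{cond:locTraffic}) and the fact that $h_j>0$ for all $j\in\AAA_J$''). You also correctly identify the additional ingredient the paper leaves implicit, namely the uniform-in-$r$ bounds on $\beta_j^r$ coming from Condition~\ref{heavytraffic}, which are needed for the second display to hold with a single constant $c_h$.
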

The next result follows from the functional central limit theorem for renewal processes.
\begin{lemma}
\label{CLTLLN}
Recall the definitions of $\theta$, $\Sigma^u$, $\Sigma^v$, and $\Sigma$ in \eqref{eq:Sigma}.
 \begin{enumerate}[(i)] \item   The following central limit theorem holds: 
\[
	(\hat{A}^r, \hat{S}^r, KM^r(\hat{A}^r -\hat{S}^r(\rho \iota)) +rK(\rho^r-\rho)\iota) \Rightarrow (\hat{A}, \hat{S}, \hat{X}) \qquad\text{in $\cld^{2J+I}$, as $r\to\infty$},
\]
where $\hat{A}$ and $\hat{S}$ are independent $J$-dimensional Brownian motions with drift $0$ and covariances $\Sigma^u$ and $\Sigma^v$, respectively, and $\hat{X}=KM(\hat{A}+\hat{S}(\rho\iota))+ \theta \iota$. In particular, $\hat{X}$ is an $I$-dimensional Brownian motion with drift $\theta$ and covariance $\Sigma$.

\item The following law of large numbers holds: 
\[
	(\bar{A}^r, \bar{S}^r) \overset{P}{\to} ( \alpha \iota, \beta \iota) \qquad\text{in $\cld^{2J}$ as $r\to \infty$}.
\]

\end{enumerate}
\end{lemma}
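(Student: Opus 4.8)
The plan is to derive everything from Donsker's theorem for the partial-sum processes of the interarrival times and job sizes --- in its triangular-array form, since these distributions depend on $r$ --- and then transfer to the counting (renewal) processes by the standard inverse-map argument; part (i) then follows by the continuous mapping theorem and part (ii) is an immediate corollary.

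\textbf{Part (i).} First I would record the Donsker input: writing $\Psi^{u,r}_j(t) = r^{-1}\big(\sum_{l=1}^{\lfloor r^2 t\rfloor} u^r_j(l) - r^2 t\,E[u^r_j(1)]\big)$ and $\Psi^{v,r}_j$ analogously with $v^r_j$, the triangular-array functional CLT gives $(\Psi^{u,r},\Psi^{v,r}) \Rightarrow (\Psi^u,\Psi^v)$ in $\sD^{2J}$, a pair of independent driftless Brownian motions with component variance parameters $(\sigma^u_j)^2$ and $(\sigma^v_j)^2$; the Lindeberg and tightness hypotheses are supplied exactly by the uniform integrability of $\{u^r_j(1)^2\}_r$ and $\{v^r_j(1)^2\}_r$ from Condition \ref{cond:uniformintegrability}, together with the convergences $\alpha^r_j\to\alpha_j$, $\beta^r_j\to\beta_j$, $\sigma^{u,r}_j\to\sigma^u_j$, $\sigma^{v,r}_j\to\sigma^v_j$ of Condition \ref{heavytraffic}, and the mutual independence across $j$ and between arrivals and sizes. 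Applying the usual first-passage/inverse-function transformation (which sends variance parameter $(\sigma^u_j)^2$ and rate $\alpha^r_j\to\alpha_j$ to variance parameter $\alpha_j^3(\sigma^u_j)^2$, and similarly for job sizes) yields $(\hat A^r,\hat S^r) \Rightarrow (\hat A,\hat S)$ in $\sD^{2J}$, with $\hat A,\hat S$ independent driftless Brownian motions of covariances $\Sigma^u,\Sigma^v$.

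\textbf{Part (i), continued.} Next, the third coordinate $KM^r(\hat A^r - \hat S^r(\rho\iota)) + rK(\rho^r - \rho)\iota$ is a continuous image of $(\hat A^r,\hat S^r)$ plus a deterministic term: the map $x\mapsto x(\rho\iota)$ (componentwise time-change by $\rho_j$) is continuous on $\sD^J$ at continuous limit points, $M^r\to M$, and $r(\rho^r-\rho)\to\eta$, so $rK(\rho^r-\rho)\iota \to K\eta\,\iota = \theta\iota$ by \eqref{eq:etaTheta}. The continuous mapping theorem therefore gives joint convergence in $\sD^{2J+I}$ of the triple to $(\hat A,\hat S,\,KM(\hat A - \hat S(\rho\iota)) + \theta\iota)$; this has the law stated for $(\hat A,\hat S,\hat X)$, the sign in front of $\hat S$ being immaterial since $-\hat S \overset{d}{=} \hat S$ independently of $\hat A$. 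Finally $\hat X$ is Brownian with drift $\theta$, and as $\hat A$ has covariance $\Sigma^u$, $\hat S(\rho\iota)$ has covariance $\Sigma^v R$, and they are independent, the covariance of $KM(\hat A + \hat S(\rho\iota))$ equals $KM(\Sigma^u + \Sigma^v R)M^T K^T = \Sigma$ as in \eqref{eq:Sigma}.

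\textbf{Part (ii) and the only subtlety.} From the definitions, $\bar A^r = \alpha^r\iota + r^{-1}\hat A^r$ and $\bar S^r = \beta^r\iota + r^{-1}\hat S^r$; since $(\hat A^r,\hat S^r)$ is tight by (i), $(r^{-1}\hat A^r, r^{-1}\hat S^r)\Rightarrow 0$ in $\sD^{2J}$, hence $(\bar A^r,\bar S^r)\Rightarrow (\alpha\iota,\beta\iota)$, and convergence in distribution to a deterministic limit is convergence in probability; alternatively (ii) is the triangular-array functional law of large numbers for renewal processes, direct from $E[u^r_j(1)]\to 1/\alpha_j$, $E[v^r_j(1)]\to 1/\beta_j$ and uniform integrability. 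There is no genuine obstacle in this lemma --- it is, as the preceding sentence in the text says, just the renewal FCLT/FLLN --- and the only point requiring attention is the triangular-array nature of the arrays $\{u^r_j(l)\}_l$ and $\{v^r_j(l)\}_l$ (their laws vary with $r$), which forces one to invoke the triangular-array Donsker theorem rather than the classical one; Condition \ref{cond:uniformintegrability} is imposed precisely to make that version applicable.
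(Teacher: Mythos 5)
Your proposal is correct and follows essentially the same route as the paper: the paper simply cites the functional CLT for renewal processes (Billingsley, Theorem 14.6) for part (i) and obtains part (ii) from the identity $(\bar A^r,\bar S^r)=(r^{-1}\hat A^r+\alpha^r\iota,\, r^{-1}\hat S^r+\beta^r\iota)$, exactly as you do. Your unpacking of the renewal FCLT via the triangular-array Donsker theorem and the inverse-map argument, and your handling of the third coordinate by continuous mapping, are just the standard proof of the result the paper invokes as a black box.
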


\begin{proof} The first statement is just the functional central limit theorem for renewal processes (see Theorem 14.6 in \cite{BillingsleyConv}), and the independence of $A$ and $S$ follows from the independence of $\{u^r_j(k)\}$ and $\{v^r_j(k)\}$.  The second statement follows from the first statement, on observing that 
\begin{equation}\label{alphabetastar}
	(\bar{A}^r, \bar{S}^r) = \left(\frac{\hat{A}^r}{r} + \alpha^r\iota, \frac{\hat{S}^r}{r} + \beta^r\iota\right) \overset{P}{\to} (\alpha \iota, \beta \iota). 
\end{equation}
\end{proof}
The following lemma follows from standard arguments. See e.g. \cite[Lemma 3.5]{budgho2}.  Proof is omitted.
\begin{lemma}\label{lem:SandABnd} There is a constant $c\in (0,\infty)$ such that for all $j \in \AAA_J$, $r\in\mathbb{N}$, and $t\ge 0$, 
\[
	E\left[ \sup_{0\le s\le t} \hat{A}^r_j(s)^2 \right] + E\left[ \sup_{0\le s\le t} \hat{S}^r_j(s)^2 \right] \le c(t+1).
\]
\end{lemma}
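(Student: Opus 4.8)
The plan is to reduce the two-sided bound to a one-sided bound on the running maximum of a recentered renewal process, and then to control that maximum by a maximal inequality for the martingale obtained from the renewal process. I will write out the argument for $\hat A^r_j$; the argument for $\hat S^r_j$ is identical with $u^r_j$ replaced by $v^r_j$ and $\alpha^r_j$ by $\beta^r_j$. Throughout, fix $j \in \AAA_J$, $r \in \NN$, and $t \ge 0$, and recall $\hat A^r_j(s) = r^{-1}(A^r_j(r^2 s) - r^2 s \alpha^r_j)$, where $A^r_j$ is the renewal process associated with the i.i.d.\ interarrival times $\{u^r_j(k)\}$.

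First I would pass from $\sup_{0\le s\le t}\hat A^r_j(s)^2$ to $\sup_{0\le s \le t}|\hat A^r_j(s)|^2 \le (\sup_{0\le s\le t}\hat A^r_j(s)^+)^2 + (\sup_{0\le s\le t}(-\hat A^r_j(s))^+)^2$ and bound each piece. The lower bound $\hat A^r_j(s) \ge -r s \alpha^r_j$ is trivial but useless on its own; instead, the standard device is to introduce the partial-sum process $T^r_j(n) = \sum_{l=1}^n u^r_j(l)$ and use the inversion relation $\{A^r_j(\tau) \ge n\} = \{T^r_j(n) \le \tau\}$. Setting $n^*(s) = \lceil r^2 s \alpha^r_j + r\lambda \rceil$ for $\lambda > 0$, one gets
\begin{align}
P\Big(\sup_{0\le s\le t}\hat A^r_j(s) \ge \lambda\Big)
&\le P\big(A^r_j(r^2 t) \ge r^2 t\alpha^r_j + r\lambda\big) \nonumber\\
&\le P\Big(\sum_{l=1}^{n^*(t)}\big(u^r_j(l) - 1/\alpha^r_j\big) \le - \lambda/(r\alpha^r_j) \cdot (\text{const})\Big),\nonumber
\end{align}
and similarly for the lower tail, after which Kolmogorov's maximal inequality (or Doob's inequality applied to the martingale $\sum_{l=1}^n (u^r_j(l) - 1/\alpha^r_j)$) together with $\Var(u^r_j(1)) = (\sigma^{u,r}_j)^2$ and $n^*(t) \le c'(r^2 t + 1)$ yields a tail bound of the form $P(\sup_{0\le s\le t}|\hat A^r_j(s)| \ge \lambda) \le c''(t+1)/\lambda^2$, uniformly in $r$. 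Uniformity in $r$ here is exactly where Condition \ref{cond:uniformintegrability} enters: uniform integrability of $\{u^r_j(1)^2\}_r$ gives $\sup_r \Var(u^r_j(1)) < \infty$ and $\sup_r \alpha^r_j < \infty$, $\inf_r \alpha^r_j > 0$ by Condition \ref{heavytraffic}. Integrating the tail bound, $E[\sup_{0\le s\le t}|\hat A^r_j(s)|^2] = \int_0^\infty 2\lambda\, P(\sup|\hat A^r_j| \ge \lambda)\,d\lambda$; since the tail bound degrades only as $\lambda^{-2}$ this integral diverges, so one must cut it off: use the crude deterministic bound $\sup_{0\le s\le t}|\hat A^r_j(s)| \le r^{-1}A^r_j(r^2 t) + r t \alpha^r_j$ and $E[A^r_j(r^2 t)^2] \le C(r^4 t^2 + 1)$ (elementary renewal estimate) to control $\lambda \ge \lambda_0$ via a fourth-moment Markov bound, or more simply apply Doob's $L^2$ inequality directly to the martingale to get $E[\sup_{0\le s\le t}(\text{martingale part})^2] \le 4\,\Var(u^r_j(1))\cdot n^*(t) \le c(t+1)$ and then handle the discrepancy between the martingale sum at the random index $A^r_j(r^2t)$ and the renewal count itself by the overshoot bound $0 \le T^r_j(A^r_j(\tau)+1) - \tau \le u^r_j(A^r_j(\tau)+1)$, whose square has expectation bounded uniformly in $r$ and $\tau$ by uniform integrability.

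The cleanest route, which I would actually take, is: (i) write $A^r_j(r^2 t)/\alpha^r_j - T^r_j(A^r_j(r^2 t)) = -(\text{overshoot})$, so that $r^{-1}(A^r_j(r^2 s) - r^2 s\alpha^r_j) = -\alpha^r_j r^{-1} Z^r_j(A^r_j(r^2 s)) + \alpha^r_j r^{-1}(\text{overshoot}) + (\text{telescoping error} \le r^{-1})$, where $Z^r_j(n) = \sum_{l=1}^n(u^r_j(l) - 1/\alpha^r_j)$ is a mean-zero martingale; (ii) bound $\sup_{0\le s\le t}|Z^r_j(A^r_j(r^2 s))| \le \sup_{0\le n\le A^r_j(r^2 t)}|Z^r_j(n)| \le \sup_{0\le n \le N^r(t)}|Z^r_j(n)|$ where $N^r(t) = A^r_j(r^2 t)$ is a stopping time; (iii) apply the optional-stopping/Doob $L^2$-maximal inequality to get $E[\sup_{0\le n\le N^r(t)}Z^r_j(n)^2] \le 4 E[Z^r_j(N^r(t))^2] = 4\,\Var(u^r_j(1))\, E[N^r(t)]$, using Wald's identity for the variance of a martingale stopped at a stopping time with integrable index; (iv) bound $E[N^r(t)] = E[A^r_j(r^2 t)] \le \alpha^r_j r^2 t + C$ by the elementary renewal theorem bound (which is uniform in $r$ given $\inf_r \alpha^r_j > 0$ and the second-moment control), so after dividing by $r^2$ this contributes $\le c(t + 1/r^2) \le c(t+1)$; (v) bound $E[(\text{overshoot})^2]$ uniformly in $r$ and in the argument using the standard residual-lifetime estimate and uniform integrability of $\{u^r_j(1)^2\}$. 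Summing (iii)–(v) and using $(a+b+c)^2 \le 3(a^2+b^2+c^2)$ gives the claim.

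The main obstacle is the uniformity in $r$ of all the renewal-theoretic constants — specifically, that $E[A^r_j(r^2 t)] \le \alpha^r_j r^2 t + C$ with $C$ independent of $r$, and that the overshoot second moment is bounded uniformly in $r$. Both of these are exactly the content of Condition \ref{cond:uniformintegrability} (uniform integrability of $\{u^r_j(1)^2\}_r$, hence uniform boundedness of second moments and a uniform renewal-function bound) together with $\inf_r \alpha^r_j > 0$, $\sup_r \alpha^r_j < \infty$ from Condition \ref{heavytraffic}; I would cite a standard reference (e.g.\ Gut, \emph{Stopped Random Walks}) for the uniform renewal estimates, paralleling the treatment in \cite[Lemma 3.5]{budgho2}. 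Everything else is routine.
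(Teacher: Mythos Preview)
The paper omits the proof entirely, referring the reader to \cite[Lemma 3.5]{budgho2} for the standard argument, so there is no detailed approach to compare against. Your route (ii)--(v) --- decompose the recentered renewal process as a martingale evaluated at a random index plus an age/overshoot term, then apply Doob's $L^2$ maximal inequality and Wald's second-moment identity --- is precisely the standard argument one finds in such references, and it is correct in outline. One small technical point: $A^r_j(r^2 t)$ is not a stopping time for the filtration $\sigma(u^r_j(1),\ldots,u^r_j(n))$ (only $A^r_j(r^2 t)+1$ is, since $\{A^r_j(\tau)\ge n\}=\{T^r_j(n)\le\tau\}$ is $\sigma(u_1,\ldots,u_n)$-measurable but $\{A^r_j(\tau)=n\}$ needs $u_{n+1}$), so in steps (iii)--(iv) you should work with $N=A^r_j(r^2 t)+1$; this changes nothing beyond an additive constant. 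Your identification of where uniformity in $r$ enters --- Condition \ref{cond:uniformintegrability} for $\sup_r E[u^r_j(1)^2]<\infty$ and Condition \ref{heavytraffic} for $\alpha^r_j$ bounded away from $0$ and $\infty$, which together give a uniform Lorden-type bound on the renewal function --- is exactly right.
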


The proof of the following Proposition is very similar to the proof of Proposition 7.5 in \cite{budjoh2}, except here we are not assuming that the moment generating functions of the interarrival times are uniformly bounded in a neighborhood of the origin (see Condition 1 in \cite{budjoh2}). Thus, instead of uniform exponential integrability, we get a weaker result. See Section \ref{sec:proofNextArrTimeBnd} for the proof. We take the convention that $u^r_j(0)=0$ for all $r\in\mathbb{N}$ and $j\in\AAA_J$.
\begin{proposition}
\label{prop:nextArrTimeBnd}
There exists $R\in(0,\infty)$ such that for all  $j\in\AAA_J$ we have 
\begin{equation*}
\sup_{t\in [0,\infty )} \sup_{r\ge R} E\left[u^{r}_{j}(\tau^{A,r}_{j}(t))\right] <\infty.
\end{equation*}
\end{proposition}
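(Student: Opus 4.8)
The plan is to control the expected length of the renewal interval that straddles time $r^2 t$, uniformly in $t$ and in $r$ (for $r$ large). The quantity $u^r_j(\tau^{A,r}_j(t))$ is precisely the interarrival interval containing $r^2 t$ (with the convention $u^r_j(0)=0$ when $r^2 t=0$), so this is a classical inspection-paradox / size-biasing estimate, but it must be made uniform over the heavy-traffic sequence rather than for a fixed renewal process. First I would fix $j$ and write, for the renewal sequence $\{u^r_j(l)\}_{l\ge 1}$ with partial sums $T^r_j(n)=\sum_{l=1}^n u^r_j(l)$, the identity $\{\tau^{A,r}_j(t)=n\}=\{T^r_j(n-1)<r^2 t\le T^r_j(n)\}$ for $n\ge 1$. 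Then
\[
E\left[u^r_j(\tau^{A,r}_j(t))\right]=\sum_{n=1}^\infty E\left[u^r_j(n)\,\ind{T^r_j(n-1)<r^2 t\le T^r_j(n)}\right].
\]
The key point is that for each $n$, $u^r_j(n)$ is independent of $T^r_j(n-1)=\sum_{l=1}^{n-1}u^r_j(l)$, so one can condition on $u^r_j(n)=x$ and obtain $E[u^r_j(\tau^{A,r}_j(t))]=\int_0^\infty x\,P\big(r^2 t - x < T^r_j(N^r_j)\le r^2 t\big)\,\mu^r_j(dx)$ in an appropriate sense, where $N^r_j$ indexes the renewal just before $r^2 t$; more cleanly, I would use the standard renewal-reward / Wald-type bound that for any renewal process the expected length of the covering interval at a deterministic time $s$ is at most $E[u^2]/E[u] + $ (boundary corrections near $s=0$), the boundary term being itself bounded by $E[u]$. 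This gives a bound of the form $E[u^r_j(\tau^{A,r}_j(t))]\le E[u^r_j(1)^2]/E[u^r_j(1)] + E[u^r_j(1)] = \alpha^r_j E[u^r_j(1)^2] + 1/\alpha^r_j$.

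Having reduced to this, I would invoke the hypotheses: by Condition \ref{cond:uniformintegrability}, $\{u^r_j(1)^2\}_{r\in\N}$ is uniformly integrable, hence in particular $\sup_r E[u^r_j(1)^2]<\infty$; and by Condition \ref{heavytraffic}, $\alpha^r_j\to\alpha_j\in(0,\infty)$, so $\sup_r\alpha^r_j<\infty$ and $\inf_r\alpha^r_j>0$ for $r$ large. Therefore there exist $R\in(0,\infty)$ and a finite constant, independent of $t$, bounding $\alpha^r_j E[u^r_j(1)^2] + 1/\alpha^r_j$ for all $r\ge R$, and taking the max over the finitely many $j\in\AAA_J$ completes the argument. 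One technical wrinkle: the case $r^2 t$ small (including $t=0$) must be handled so that the covering-interval bound still applies; the convention $u^r_j(0)=0$ makes the $t=0$ value zero, and for $0<r^2 t\le$ (first renewal) one has $\tau^{A,r}_j(t)=1$, contributing $E[u^r_j(1)\ind{u^r_j(1)\ge r^2 t}]\le E[u^r_j(1)]$, which is again uniformly bounded.

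The main obstacle — and the reason the proof is more delicate than for a single fixed renewal process — is making the covering-interval estimate genuinely uniform over $r$ when the interarrival distributions vary with $r$ and the time horizon $r^2 t\to\infty$. The classical bound $E[\text{covering interval at }s]\le E[u^2]/E[u]$ for $s\to\infty$ is an asymptotic (key renewal theorem) statement, and for finite $s$ there is a correction that, although controllable, depends on the distribution; the clean way around this is to avoid the key renewal theorem entirely and instead use the exact inequality $E[u^r_j(\tau^{A,r}_j(t))\ind{\tau^{A,r}_j(t)\ge 1}]\le E[u^r_j(1)] + \sum_{n\ge 1}E[(u^r_j(n)-c)^+\,\ind{\cdots}]$ type splitting, or more directly the size-biasing bound $E[u^r_j(\tau^{A,r}_j(t))]\le \sup_{s\ge 0} E[\,\text{covering interval at }s\,]$ together with the elementary fact (valid for every renewal process, with no limit) that this supremum is at most $E[u^r_j(1)] + E[u^r_j(1)^2]/E[u^r_j(1)]$ — which I believe is what the proof in Section \ref{sec:proofNextArrTimeBnd} establishes, following the pattern of Proposition 7.5 in \cite{budjoh2} but replacing the exponential-moment input by the weaker uniform integrability of second moments. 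Once that uniform, non-asymptotic inequality is in hand, the conclusion is immediate from Conditions \ref{cond:uniformintegrability} and \ref{heavytraffic} as above.
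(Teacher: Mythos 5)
Your proposal is correct in outline and shares its skeleton with the paper's proof: both decompose $E[u^{r}_{j}(\tau^{r,A}_{j}(t))]$ over the index $n$ of the covering interval, exploit the independence of $u^{r}_{j}(n)$ from $\sum_{l=1}^{n-1}u^{r}_{j}(l)$ to condition on $u^{r}_{j}(n)=x$, and thereby reduce the problem to bounding, linearly in $x$ and uniformly in $r$ and $t$, the expected number of renewals falling in an interval of length $x$ --- which, after integrating against the law of $u^{r}_{j}(1)$, produces first and second moments that Conditions \ref{cond:uniformintegrability} and \ref{heavytraffic} control uniformly for $r\ge R$. Where you diverge is in how that renewal-count bound is obtained. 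The paper proves it by hand: uniform integrability forces a uniform-in-$r$ positive probability that $u^{r}_{j}(1)\ge (4\alpha_j)^{-1}$, so the number of renewals needed to cover length $x$ is stochastically dominated by a sum of $\lceil 4x\alpha_j\rceil$ geometric variables, yielding $E[\tau^{r,A}_{j}(x/r^{2})]\le 4K\alpha_j+16xK\alpha_j^2$. You instead invoke a ``standard'' non-asymptotic covering-interval bound $\sup_{s}E[C(s)]\le E[u^2]/E[u]+E[u]$. Two caveats here. First, this is the one genuinely nontrivial ingredient of your argument and you assert it rather than prove it; made precise, it follows from subadditivity of the renewal measure together with Lorden's inequality (equivalently, Wald's identity plus a uniform bound on the overshoot), which does hold assuming only a finite second moment and with constants depending only on $E[u^{r}_{j}(1)]$ and $E[u^{r}_{j}(1)^2]$, hence uniformly controllable for $r\ge R$. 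Second, the exact constant you quote is not obviously attainable as stated; the subadditivity-plus-Lorden route gives $2E[u^2]/E[u]$, which is all that is needed. With that ingredient supplied, your final reduction to Conditions \ref{cond:uniformintegrability} and \ref{heavytraffic} (and the handling of small $t$ via the convention $u^r_j(0)=0$) is exactly the paper's, so the proof closes; the paper's geometric-domination argument simply buys self-containedness at the cost of a less familiar-looking estimate.
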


%

\subsection{The space $\sM^I$}
\label{sec:measSpaceDet}
Recall that $\sM^d$ is the $d$-fold product space of nonnegative, locally finite measures on $[0,\infty)$ and $\sN_+^d$ is the  subset of $\sD_+^d$ consisting of nondecreasing functions.  We will use the connection between $\sN_+^I$ and $\sM^I$ to prove tightness and convergence of our controls.  
Note that for any $d\in \mathbb{N}$ there is a one-to-one correspondence between $\sN_+^d$ and $\sM^d$ where for any $g\in \sN_+^d$ the corresponding $\mu^g\in\sM^d$ is defined by
\begin{equation*}
\mu_i^g([0,t])\doteq g_i(t)\,\text{ for all }\,t\in [0,\infty)\,\text{ and }\,i\in\AAA_d, 
\end{equation*}
and for any $\mu\in \sM^d$ the corresponding $g^\mu \in \sN_+^d$ is defined by 
\begin{equation*}
g^\mu_i(t)\doteq \mu_i([0,t])\,\text{ for all }\,t\in [0,\infty)\,\text{ and }\,i\in\AAA_d.
\end{equation*}
Note that with $g \in \sN_+^d$ and $\mu \in \sM^d$ identified in the above fashion,
$g(0) = \mu(\{0\})$.

Recall that we endow $\sM^d$ with the weakest topology such that, for any $f\in \mathcal{C}_0([0,\infty))$ and $i\in\AAA_d$, the function $\mu\mapsto \int_0^\infty f(x)\mu_i(dx)$ from $\sM^d$ to $\mathbb{R}$ is continuous.  We use the distance defined below to make this a Polish space.  This metric is  a straightforward extension to multiple dimensions of the metric described in \cite[Appendix A.4]{buddup2019}.
\begin{definition}
For all $k\in\mathbb{N}$ define
\begin{equation*}
\mathcal{BL}^0_k\doteq \left\{f\in \mathcal{C}([0,k]): \sup_{t\in [0,k]}|f(t)|\leq 1\text{, }\sup_{t,s\in [0,k]}\frac{|f(t)-f(s)|}{|t-s|}\leq 1\text{, and }f(k)=0\right\}
\end{equation*} 
and 
\begin{equation*}
d_{\sM,k}(\mu,\nu)\doteq \sup_{f\in \mathcal{BL}^0_k}\left|\int_{[0,k]}f(t)\mu(dt)-\int_{[0,k]}f(t)\nu(dt)\right|, \qquad \mu,\nu\in\sM^1. 
\end{equation*}
Then the distance on $\sM^1$ given by 
\begin{equation*}
d_{\sM}(\mu,\nu)\doteq \sum_{k=1}^\infty 2^{-k}\left(d_{\sM,k}(\mu,\nu)\wedge 1\right), \qquad \mu,\nu\in\sM^1, 
\end{equation*}
and the corresponding distance  on $\sM^d$ is
\begin{equation*}
d_{\sM^d}(\mu,\nu)\doteq \sum_{i=1}^d d_{\sM}(\mu_i,\nu_i), \qquad \mu,\nu\in\sM^d. 
\end{equation*} 
\end{definition}
It is easily seen that $\lim_{n\rightarrow\infty} d_{\sM^d}(\mu^n,\mu)=0$ if and only if $\mu_i^n\rightarrow \mu_i$ in the weak topology as finite measures on $[0,k)$ for all $i\in \AAA_d$ and $k\in\mathbb{N}$.  It can be verified that $\sM^d$ is a Polish space under the distance $d_{\sM^d}$.  Although we often identify elements of $\sM^d$ and $\sN_+^d$, when referring to the space $\sM^d$ it is implied that we are using the topology from the $d_{\sM^d}$ distance and when referring to the space $\sN_+^d$ it is implied that we are using the stronger Skorokhod topology.  

Recall that $\mu^n([0,\cdot])\rightarrow\mu([0,\cdot]))$ in $\sN^d_+$ implies $\mu^n_i([0,t])\rightarrow \mu_i([0,t])$ at any continuity point $t$ of $\mu_i([0,\cdot])$, $i \in \NN_d$ (cf.  \cite[Theorem 12.5 (i)]{BillingsleyConv}).  This implies that $\mu^n\rightarrow \mu$  in $\sM^d$. So, convergence in $\sN_+^d$ implies convergence in $\sM^d$.  However, it is easily checked that  the opposite is not true.  
Consequently, if we identify the elements of $\sN^d_+$ and $\sM^d$ as discussed above, the Skorokhod topology is finer than the vague convergence topology.  We choose to use the vague convergence topology for our controls because it is sufficient for our purposes and proving tightness in this topology is easier.  
The key fact that allows us to use the tightness in the coarser vague convergence topology is that the Borel sigma fields  corresponding to the vague convergence topology and the Skorohod topology (relative to $\sN^d_+$ ) are the same.
This result is given in Lemma \ref{Borel} below. 
We start with the following definition.
\begin{definition}
\label{def:piProj}
For $t\in [0,\infty)$ define the projection $\pi_t$ from $\sD^d$ to $\mathbb{R}^d$ by $\pi_t(x)=x(t)$ for $x\in \sD^d$.  With an abuse of notation, we denote the analogous map on $\sN_+^d$ by $\pi_t$ as well. Namely,   $\pi_t: \sM^d \to  \mathbb{R}^d$ is given by $\pi_t(\mu)=\mu([0,t])$ for  $\mu\in \sM^d$.  
\end{definition}
It is known that  (cf. \cite[Proposition 3.7.1]{ethier2009markov}) that $\mathcal{B}(\sD^d) = \sigma\{\pi_t: t\ge 0\}$.
  Thus, since  $\sN_+^d$ is equipped with the topology inherited from $\sD^d$, it follows that, restricted to $\sN_+^d$, $\mathcal{B}(\sN_+^d) = \sigma\{\pi_t: t\ge 0\}$ as well.  The following lemma says that 
  $\sigma\{\pi_t: t\ge 0\}$ also equals $\clb(\sM^d)$ and consequently, 
   although the  Skorokhod topology is finer than  the topology of vague convergence, they both generate the same Borel $\sigma$-algebras. 
  The proof of the lemma follows on noting first that, by a simple monotone class argument,
  $\sigma\{\pi_t: t \in [0, \infty)\}$ is the same as $\sigma\{ \pi_A: A \in \clb([0, \infty))\}$
  where $\pi_A: \clm^1 \to [0,\infty]$ is defined as $\pi_A(\nu) \doteq \nu(A)$; and then by a standard approximation argument that the latter sigma field is the same as
  $\sigma\{ \pi_f: f \in \clc_0[0, \infty)\}$ where $\pi_f: \clm^1 \to [0,\infty)$ is defined as $\pi_f(\nu) \doteq \int f d\nu$. We omit the details.
\begin{lemma} \label{Borel}
We have,  $\mathcal{B}(\sM^d)=\sigma\left(\pi_t: t\in [0,\infty)\right)$.  In particular, the map from 
 $\sM^d$ to  $\sN_+^d$ given by $\mu\rightarrow \mu([0,\cdot])$ is measurable. 
\end{lemma}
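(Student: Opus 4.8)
\textbf{Proof plan for Lemma \ref{Borel}.}

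The plan is to establish the chain of equalities $\sigma(\pi_t : t \in [0,\infty)) = \sigma(\pi_A : A \in \clb([0,\infty))) = \sigma(\pi_f : f \in \clc_0([0,\infty))) = \clb(\sM^d)$, where $\pi_A(\nu) \doteq \nu(A)$ and $\pi_f(\nu) \doteq \int f\,d\nu$, and these are understood coordinatewise on the product space $\sM^d$. It suffices to treat $d=1$ and then take products. First I would show $\sigma(\pi_t : t \ge 0) = \sigma(\pi_A : A \in \clb([0,\infty)))$: the inclusion ``$\subseteq$'' is immediate since $\pi_t = \pi_{[0,t]}$; for ``$\supseteq$'', apply a monotone class (Dynkin $\pi$-$\lambda$) argument, noting that $\{[0,t] : t \ge 0\}$ together with $[0,\infty)$ is a $\pi$-system generating $\clb([0,\infty))$, and that the collection of sets $A$ for which $\pi_A$ is $\sigma(\pi_t : t\ge 0)$-measurable is a $\lambda$-system (closed under proper differences and increasing limits, using that measures are countably additive and $\nu$ is locally finite so values on bounded sets are finite).

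Next I would show $\sigma(\pi_A : A \in \clb([0,\infty))) = \sigma(\pi_f : f \in \clc_0([0,\infty)))$. For ``$\supseteq$'': any $f \in \clc_0$ is a bounded Borel function supported on some compact $[0,k]$, hence a uniform (equivalently, pointwise bounded) limit of simple functions built from Borel sets contained in $[0,k]$; since $\nu$ restricted to $[0,k]$ is finite, $\int f\,d\nu$ is the corresponding limit of $\sigma(\pi_A)$-measurable functions, so $\pi_f$ is $\sigma(\pi_A)$-measurable. For ``$\subseteq$'': given a bounded interval $[0,t]$, approximate the indicator $\ind{[0,t]}$ from above by continuous compactly supported functions $f_n \downarrow \ind{[0,t]}$ (for instance, piecewise-linear functions equal to $1$ on $[0,t]$, $0$ outside $[0, t+1/n]$); by monotone (dominated) convergence of the finite measure $\nu|_{[0,t+1]}$, we get $\pi_{[0,t]}(\nu) = \lim_n \pi_{f_n}(\nu)$, so each $\pi_{[0,t]}$ is $\sigma(\pi_f : f\in\clc_0)$-measurable, and then the previous step gives the full inclusion.

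Finally I would identify $\sigma(\pi_f : f \in \clc_0([0,\infty)))$ with $\clb(\sM^1)$. By definition the topology on $\sM^1$ is the weakest making every $\pi_f$, $f \in \clc_0$, continuous, so each $\pi_f$ is Borel measurable, giving ``$\subseteq$''. For the reverse, since $\sM^1$ is a Polish space under $d_\sM$ (a fact already recorded in the excerpt), its Borel $\sigma$-algebra coincides with the one generated by any countable family of continuous functions that separates points and generates the topology; one such family is $\{\pi_f : f \in F\}$ for a countable $\|\cdot\|_\infty$-dense subset $F$ of $\bigcup_k \clc([0,k])$ with the sup and Lipschitz normalizations (these are precisely what enter the definition of $d_{\sM,k}$), and each such $\pi_f$ lies in $\sigma(\pi_f : f \in \clc_0)$. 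Hence $\clb(\sM^1) \subseteq \sigma(\pi_f : f \in \clc_0)$. Combining all steps yields $\clb(\sM^d) = \sigma(\pi_t : t \ge 0)$, and the final assertion --- measurability of $\mu \mapsto \mu([0,\cdot])$ from $\sM^d$ to $\sN_+^d$ --- follows because $\clb(\sN_+^d) = \sigma(\pi_t : t \ge 0)$ (as noted just before the lemma) and the map is, coordinatewise and at each fixed $t$, exactly $\pi_t$.

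The main obstacle I anticipate is the last step: care is needed to argue that $\clb(\sM^1)$ is generated by countably many of the evaluation maps $\pi_f$, rather than merely contained in $\sigma(\pi_f : f \in \clc_0)$, since a priori the vague topology could be finer than the topology generated by any countable subfamily. This is resolved by using the \emph{explicit} metric $d_\sM$: the functions $f \in \mathcal{BL}^0_k$ (suitably discretized to a countable dense set in sup norm) are exactly the test functions appearing in $d_{\sM,k}$, so $d_\sM$ — and hence the Borel $\sigma$-algebra — is measurable with respect to this countable family of $\pi_f$'s. Everything else is routine $\pi$-$\lambda$ and approximation bookkeeping, which is why the excerpt is content to omit the details.
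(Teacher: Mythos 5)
Your proposal is correct and follows essentially the same route the paper sketches: the chain $\sigma(\pi_t)=\sigma(\pi_A)=\sigma(\pi_f)$ via a monotone class argument and a standard approximation argument, with the details omitted in the paper. Your additional care on the final identification $\sigma(\pi_f)=\clb(\sM^d)$ — using the countable family of test functions underlying the explicit metric $d_\sM$ to capture open balls — is exactly the right way to fill the one step the paper leaves entirely implicit.
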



The following lemma allows us to easily prove tightness of controls when working with the vague convergence topology and is our main motivation for using the space $\sM^I$.  The proof is straightfoward and therefore omitted. \begin{lemma}
\label{lem:compactMeasSpace}
For any sequence $K=(K_1,K_2,...)$ satisfying $K_n\in [0,\infty)$ for all $n\in\mathbb{N}$ the set 
\begin{equation*}
C_K\doteq\{\mu\in \sM^d: |\mu([0,n))|\leq K_n\text{ for all }n\in\mathbb{N}\}
\end{equation*}
is a relatively compact subset of $\sM^d$. Furthermore, if $\{\theta_k, k \in \NN\}$ is a sequence of $\clm^d$ valued random variables, then this sequence is tight (namely the corresponding probability laws are relative compact), if and only if the sequence 
$\{\theta_k[0,n), k \in \N\}$ is tight for each $n \in \NN$.
\end{lemma}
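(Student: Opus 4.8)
The plan is to reduce to the one-dimensional case and then use the elementary characterization of vague compactness. Since $\sM^d$ carries the product topology, a finite product of relatively compact sets is relatively compact, and tightness of a family of $\sM^d$-valued random variables is equivalent to tightness of each of the $d$ coordinate families; moreover $C_K$ is contained in the $d$-fold product of the one-dimensional sets $\{\nu\in\sM^1:\nu([0,n))\le K_n \text{ for all } n\}$. Hence it suffices to prove both assertions for $d=1$, and I would begin by recording this reduction.

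For the relative compactness of $C_K$ with $d=1$, I would give a self-contained argument via Helly's selection theorem. Given any sequence $\{\mu^m\}\subset C_K$, the associated nondecreasing functions $F_m(t)\doteq\mu^m([0,t])$ satisfy $0\le F_m(t)\le K_n$ for all $t<n$ and all $m$, so they are uniformly bounded on every compact interval; by Helly's theorem there is a subsequence along which $F_m$ converges at every continuity point of some nondecreasing function, which after taking a right-continuous modification corresponds to a locally finite measure $\mu$. A truncation argument testing against functions $f\in\mathcal{C}_0([0,\infty))$ with $f\le\mathbf{1}_{[0,n)}$ then shows $\mu^m\to\mu$ vaguely along the subsequence and $\mu([0,n))\le K_n$ for every $n$, i.e. $\mu\in C_K$. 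Thus $C_K$ is sequentially compact, hence compact, in the Polish space $\sM^1$; in particular it is relatively compact. (Alternatively one may simply invoke the standard fact that a family of locally finite measures on $[0,\infty)$ is vaguely relatively compact iff it is uniformly bounded on compact sets, which applies since every compact subset of $[0,\infty)$ lies in some $[0,n)$.)

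For the tightness equivalence I would argue both directions. For the forward implication, suppose $\{\theta_k\}$ is tight; fix $\varepsilon>0$ and, by Prokhorov's theorem, a compact $C\subset\sM^1$ with $\inf_k P(\theta_k\in C)\ge 1-\varepsilon$. For each $n$ choose $f_n\in\mathcal{C}_0([0,\infty))$ with $\mathbf{1}_{[0,n)}\le f_n\le 1$; the map $\mu\mapsto\int f_n\,d\mu$ is continuous on $\sM^1$ by the definition of the vague topology, hence bounded on $C$ by some $K_n<\infty$, and since $\theta_k([0,n))\le\int f_n\,d\theta_k$ we get $\sup_k P(\theta_k([0,n))>K_n)\le\varepsilon$, which is tightness of $\{\theta_k([0,n))\}$ for each $n$. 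For the converse, suppose $\{\theta_k([0,n))\}$ is tight for every $n$; given $\varepsilon>0$, pick $K_n<\infty$ with $\sup_k P(\theta_k([0,n))>K_n)\le\varepsilon 2^{-n}$, set $K=(K_1,K_2,\dots)$, and note that $C_K$ is relatively compact by the first part and that $C_K=\bigcap_n\{\mu:\mu([0,n))\le K_n\}$ is Borel since $\mu\mapsto\mu([0,n))$ is measurable by Lemma \ref{Borel}. Then $\{\theta_k\notin C_K\}\subseteq\bigcup_n\{\theta_k([0,n))>K_n\}$ gives $P(\theta_k\notin\overline{C_K})\le\varepsilon$ for all $k$, so $\{\theta_k\}$ is tight.

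I expect the only genuine obstacle to be the relative compactness of $C_K$; everything else is routine bookkeeping with continuous compactly supported test functions and a union bound. If one cites the vague-compactness criterion the first step is immediate, while the self-contained Helly-selection route requires a small but careful verification that the pointwise limit of the distribution functions is the vague limit of the measures and that the defining bounds $\mu^m([0,n))\le K_n$ survive the passage to the limit.
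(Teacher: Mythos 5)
Your proof is correct; the paper omits the argument as straightforward, and what you give is exactly the standard route one would expect: reduction to $d=1$ via the product topology, relative compactness of $C_K$ from the uniform-boundedness-on-compacts criterion (or Helly selection), and the tightness equivalence via Prokhorov together with a test function $f_n\in\mathcal{C}_0$ dominating $\mathbf{1}_{[0,n)}$ in one direction and the union bound $\{\theta_k\notin C_K\}\subseteq\bigcup_n\{\theta_k([0,n))>K_n\}$ in the other. Nothing to correct.
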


The next lemma gives an important continuity property that makes use of the above identification between $\cln_+^d$ and $\clm^d$.

\begin{lemma}
\label{lem:intBndContFnc}
Suppose that $f: \mathbb{R}^d\times\mathbb{R}_+ \to \R$ is measurable and satisfies
\begin{enumerate}[(i)]

		\item $f(\cdot,t)$ is a continuous for a.e. $t\in \mathbb{R}_+$, and 

		\item there exists a measurable function $K:\mathbb{R}_+\rightarrow\mathbb{R}_+$ such that $\int_0^\infty K(t)dt<\infty$ and for all $t\in\mathbb{R}_+$ we have $\sup_{y\in\mathbb{R}^d}|f(y,t)|\leq K(t)$. 
\end{enumerate}
Then, the real-valued function on $\mathbb{R}^d\times \sD^d\times \sM^d$ given by
\begin{equation*}
(w,x,\upsilon) \mapsto \int_0^\infty f(w+x(t)+\upsilon([0,t]),t)dt
\end{equation*}
is bounded and continuous.
\end{lemma}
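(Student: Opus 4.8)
The plan is to reduce the continuity claim to a pointwise-a.e.\ convergence statement for the integrands and then invoke dominated convergence; boundedness is immediate. Indeed, for any $(w,x,\upsilon)\in\mathbb{R}^d\times\sD^d\times\sM^d$, hypothesis (ii) gives
\[
\left|\int_0^\infty f\big(w+x(t)+\upsilon([0,t]),t\big)\,dt\right|\le \int_0^\infty K(t)\,dt<\infty,
\]
a bound independent of $(w,x,\upsilon)$, so the function is bounded. (Measurability of $t\mapsto f(w+x(t)+\upsilon([0,t]),t)$ is clear since $x$ and $t\mapsto\upsilon([0,t])$ are RCLL, hence measurable, and $f$ is jointly measurable.)

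For continuity, since $\mathbb{R}^d\times\sD^d\times\sM^d$ is metrizable it suffices to prove sequential continuity, so let $(w_n,x_n,\upsilon_n)\to(w,x,\upsilon)$. The heart of the argument is the claim that, for Lebesgue-a.e.\ $t\in\mathbb{R}_+$,
\[
w_n+x_n(t)+\upsilon_n([0,t])\longrightarrow w+x(t)+\upsilon([0,t]).
\]
To establish this I would discard three null sets of exceptional times: the set $N_f$ of $t$ for which $f(\cdot,t)$ is not continuous (null by (i)); the set $N_x$ of discontinuity points of $x$ (at most countable, since $x\in\sD^d$); and the set $N_\upsilon$ of atoms of $\upsilon$ (at most countable, since $\upsilon$ is locally finite). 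For $t\notin N_x$, convergence in the Skorokhod topology yields $x_n(t)\to x(t)$ (cf.\ \cite[Theorem 12.5]{BillingsleyConv}); and for $t\notin N_\upsilon$, I would use the portmanteau characterization of vague convergence -- $\upsilon_n\to\upsilon$ in $\sM^d$ forces $\upsilon_n(A)\to\upsilon(A)$ for every relatively compact Borel set $A$ with $\upsilon(\partial A)=0$ -- applied coordinatewise to $A=[0,t]$, whose boundary in $[0,\infty)$ is the singleton $\{t\}$; this gives $\upsilon_n([0,t])\to\upsilon([0,t])$. Together with $w_n\to w$, this proves the claim off $N_x\cup N_\upsilon$.

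Finally, for $t\notin N_f\cup N_x\cup N_\upsilon$, continuity of $f(\cdot,t)$ at the limit point and the displayed convergence give $f(w_n+x_n(t)+\upsilon_n([0,t]),t)\to f(w+x(t)+\upsilon([0,t]),t)$; since the exceptional set is Lebesgue-null, this holds for a.e.\ $t$. As the integrands are dominated in absolute value by $K\in L^1(\mathbb{R}_+)$ uniformly in $n$, the dominated convergence theorem gives
\[
\int_0^\infty f\big(w_n+x_n(t)+\upsilon_n([0,t]),t\big)\,dt \longrightarrow \int_0^\infty f\big(w+x(t)+\upsilon([0,t]),t\big)\,dt,
\]
which is the desired continuity. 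I expect the only genuinely delicate point to be the step passing from vague convergence of $\upsilon_n$ to pointwise convergence of the cumulative functions $t\mapsto\upsilon_n([0,t])$ outside the atoms of $\upsilon$ -- this is precisely where the coarser topology on $\sM^d$ enters -- while the remainder is a routine dominated-convergence argument.
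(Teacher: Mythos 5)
Your proof is correct and follows essentially the same route as the paper's (deliberately terse) argument: establish the a.e.-pointwise convergence $(w_n, x_n(t), \upsilon_n([0,t])) \to (w, x(t), \upsilon([0,t]))$ off the countable sets of discontinuities of $x$ and atoms of $\upsilon$, then conclude by dominated convergence with dominating function $K$. You have simply supplied the details (the identification of the exceptional null sets and the portmanteau step for the vague convergence) that the paper omits.
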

The above lemma follows on making the following observations. If $\{(w^n,x^n,\upsilon^n)\}_n$ is a  sequence in $\mathbb{R}^d\times\sD^d\times \sM^d$ such $\lim_{n\rightarrow\infty}(w^n,x^n,\upsilon^n)=(w,x,\upsilon)$, then for a.e. $t$, $(w^n, x^n(t), \upsilon^n([0,t])) \to (w, x(t), \upsilon([0,t]))$ in $\R^{3d}$. The 
convergence of 
$\int_0^\infty f(w_n+x_n(t)+\upsilon_n([0,t]),t)dt$ to $\int_0^\infty f(w+x(t)+\upsilon([0,t]),t)dt$ is then an immediate consequence of dominated convergence theorem which proves the continuity of the above map. The boundedness of the map is clear.  We omit the details.

\section{Proof of Theorem \ref{thm:main}}
\label{sec:erg}

Here we prove our main result. Recall the definition of $J_E^*$ given in \eqref{eq:asymValueFcn}. 
Note that if $J_E^*=\infty$, then Theorem \ref{thm:main}  holds trivially. Thus throughout the rest of this section we assume  that $J_E^*<\infty$.  We start by considering a subsequence of networks and corresponding controls which achieve the asymptotically optimal cost.
\begin{definition}
\label{def:optSeq}
Assume $J_E^*<\infty$.  Let $\{r_{k},B^{r_{k}},T_{k}\}_{k\in\mathbb{N}}$ be a sequence such that $r_k\in\mathbb{N}$, $B^{r_k}\in\mathcal{A}^{r_k}$, and $T_k\in\mathbb{R}_+$ for all $k\in\mathbb{N}$, and which satisfies 
\begin{equation*}
r_{k} \uparrow \infty,\;\; T_{k} \uparrow \infty, \,\text{ and }\,\sup_{T\geq T_{k}}J_{E}^{r_{k},T}(B^{r_{k}})\leq J_{E}^{*}+\frac{1}{r_{k}}, 
\end{equation*}
where $J_E^{r,T}(B^r) = E\left[T^{-1}\int_0^T h \cdot \hat Q^r(t)dt\right]$. 
\end{definition}

Note that one can always find a sequence with properties as in the above definition. For notational simplicity, we will frequently write the superscripts $r_k$ in various quantities (e.g. as in $B^{r_k}$) as simply $k$ (i.e. as $B^k$).

The next two propositions are central to the proof of Theorem \ref{thm:main}, but their proofs are somewhat technical and given in Section \ref{sec:MovedProofs}.  The first allows us to control the impact of the residual arrival and service times.  In this proposition, in contrast to
$r_k\bar{\Upsilon}^{A,k}_{j}(t)$,
we only have a `time-averaged' control over $r_k\bar{\Upsilon}^{S,k}_{j}(t)$.
which is obtained by leveraging the bound on $\sup_{k}J_E^{r_k}(B^{r_k})$ in Definition 
\ref{def:optSeq}.

\begin{proposition}
\label{prop:ArrServBnds}
Consider the sequence $\{r_{k},B^{k},T_{k}\}_{k\in\mathbb{N}}$ from Definition \ref{def:optSeq}.  For any $j\in \AAA_J$ and $\epsilon>0$, we have
\begin{enumerate}[(i)]

\item $\displaystyle \lim_{k\rightarrow\infty}\sup_{t\geq 0}P\left( \hat{\Upsilon}^{A,k}_{j}(t)>\epsilon\right)=0$, and 

\item $\displaystyle \lim_{k\rightarrow\infty}\frac{1}{T_k}\int_0^{T_k}P\left( \hat{\Upsilon}^{S,k}_{j}(t)>\epsilon\right)dt=0$. 

\end{enumerate}
\end{proposition}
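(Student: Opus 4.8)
\textbf{Plan for the proof of Proposition \ref{prop:ArrServBnds}.}

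The two statements have the same flavour but require genuinely different inputs, reflecting the asymmetry between arrivals (uncontrolled) and service (controlled) emphasized in the introduction. For part (i), recall that $\hat\Upsilon^{A,k}_j(t) = r_k\bar\Upsilon^{A,k}_j(t) = r_k(\bar\xi^{A,k}_j(t)-t)$, and that $r_k^2\bar\xi^{A,k}_j(t) = \sum_{l=1}^{\tau^{r_k,A}_j(t)} u^{r_k}_j(l)$ is the first partial sum of interarrival times that reaches or exceeds $r_k^2 t$. Hence $r_k^2\bar\Upsilon^{A,k}_j(t)$ is precisely the \emph{overshoot} (residual life) of the renewal process $A^{r_k}_j$ at time $r_k^2 t$, which is at most $u^{r_k}_j(\tau^{A,r_k}_j(t))$ — the length of the interarrival interval straddling $r_k^2 t$. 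Thus
$$
P\big(\hat\Upsilon^{A,k}_j(t) > \epsilon\big) = P\big(r_k^2\bar\Upsilon^{A,k}_j(t) > \epsilon r_k\big) \le \frac{1}{\epsilon r_k}\, E\big[u^{r_k}_j(\tau^{A,r_k}_j(t))\big]
$$
by Markov's inequality, and the right-hand side is bounded by $\frac{C}{\epsilon r_k}$ uniformly in $t$ for $r_k\ge R$ by Proposition \ref{prop:nextArrTimeBnd}. Taking $\sup_{t\ge 0}$ and then $k\to\infty$ gives (i). This part is essentially a uniform-in-time renewal estimate and should be short.

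For part (ii), the analogous quantity $r_k^2\bar\Upsilon^{S,k}_j(t)$ is the overshoot of the renewal process $S^{r_k}_j$ at the (random, control-dependent) time $r_k^2\bar B^{r_k}_j(t)$, and is again dominated by $v^{r_k}_j(\tau^{S,r_k}_j(t))$. The obstacle is that there is no uniform-in-$t$ analogue of Proposition \ref{prop:nextArrTimeBnd} here: the argument of $S^{r_k}_j$ is $r_k^2\bar B^{r_k}_j(t)$, which need not grow, so the renewal process may sit for a long time inside a single (possibly large) service interval, and only a time-averaged control is available. The plan is to relate a large overshoot of the service-renewal process to a build-up of the queue, via the representation \eqref{eq:328} of $\hat Q^{r_k}_j$. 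Concretely: if $\hat\Upsilon^{S,k}_j(t)>\epsilon$, i.e. $r_k^2\bar\Upsilon^{S,k}_j(t)>\epsilon r_k$, then for all $s$ in an interval after $t$ of length of order $\epsilon/r_k$ (on the fluid/diffusion scale) no type-$j$ service completion occurs while type-$j$ arrivals continue at the nominal rate; using part (i) to control the arrival residual and the FCLT/LLN bounds of Lemmas \ref{CLTLLN}--\ref{lem:SandABnd}, this forces $h\cdot\hat Q^{r_k}(s)$ (more precisely its $j$-th contribution, which has a strictly positive coefficient $h_j$) to stay above a constant multiple of $\epsilon$ on a set of $s$ of positive length, on a good event of probability close to one. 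Integrating over $s$, averaging, and using the cost bound $\sup_{T\ge T_k}J^{r_k,T}_E(B^{r_k})\le J_E^*+1/r_k$ from Definition \ref{def:optSeq} then bounds $\frac{1}{T_k}\int_0^{T_k}P(\hat\Upsilon^{S,k}_j(t)>\epsilon)\,dt$ by a quantity tending to $0$.

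\textbf{Main obstacle.} The delicate point is making the ``a large service overshoot at time $t$ forces a queue build-up over a time window'' argument quantitative and uniform: one must choose the window length (shrinking appropriately with $r_k$) so that, with high probability, (a) no service completion of type $j$ happens in the window because we are inside one long service interval, (b) enough arrivals occur that the diffusion-scaled queue $\hat Q^{r_k}_j$ is pushed up by $\gtrsim\epsilon$, and (c) the contribution of the other job types to $h\cdot\hat Q^{r_k}$ is not negative (it is automatically $\ge 0$ since $\hat Q^{r_k}\ge 0$ and $h>0$), so the lower bound on the cost integrand survives. Handling the interaction between the random integration time $r_k^2\bar B^{r_k}_j$ and the renewal structure of $S^{r_k}_j$ — for which Lemma \ref{lem:sameDist} and the decomposition \eqref{eq:328} with the fresh renewal processes $A^{r,s}, S^{r,s}$ are the right tools — and bookkeeping the various ``good events'' uniformly in $t\in[0,T_k]$ is where the real work lies; this is presumably why the authors defer the proof to Section \ref{sec:MovedProofs}.
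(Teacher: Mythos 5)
Part (i) of your proposal is correct and is essentially verbatim the paper's argument: the overshoot is dominated by the straddling interarrival time $u_j^{r_k}(\tau_j^{r_k,A}(t))$, and Markov's inequality together with Proposition \ref{prop:nextArrTimeBnd} gives the uniform-in-$t$ bound.

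Part (ii) has a genuine quantitative gap. Your mechanism is: on $\{\hat{\Upsilon}^{S,k}_j(t)>\epsilon\}$ the unscaled residual service requirement exceeds $\epsilon r_k$, so by the Lipschitz bound \eqref{eq:CLip} no type-$j$ completion can occur on a fluid-time window of length $\delta_k=\epsilon/(Lr_k)$ after $t$, during which arrivals push $\hat Q^{k}_j$ up by roughly $\alpha_j r_k^2\delta_k/r_k=\alpha_j\epsilon/L$. This is all true, but it only yields $\int_t^{t+\delta_k}\hat Q^{k}_j(s)\,ds\gtrsim \epsilon\,\delta_k=\epsilon^2/(Lr_k)$, and when you integrate this inequality over $t\in[0,T_k]$ and exchange the order of integration you pick up the same factor $\delta_k$ on the other side:
\begin{equation*}
\frac{c\,\epsilon^2}{r_k}\int_0^{T_k}P\left(\hat\Upsilon^{S,k}_j(t)>\epsilon\right)dt\;\lesssim\;\delta_k\,E\left[\int_0^{T_k+\delta_k}\hat Q^{k}_j(s)\,ds\right]\;\lesssim\;\frac{\epsilon}{r_k}\,T_k\left(J_E^*+1\right).
\end{equation*}
The powers of $r_k$ cancel exactly, leaving $\frac{1}{T_k}\int_0^{T_k}P(\cdot)\,dt=O(1/\epsilon)$ uniformly in $k$ — a constant, not a quantity tending to $0$. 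The window cannot be lengthened, because the Lipschitz bound is the only a priori control on how fast the allocation eats through the residual.

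The missing idea is a dichotomy on whether a type-$j$ service completion occurred during $[t-1,t]$, i.e.\ whether $\tau^{k,S}_j(t-1)<\tau^{k,S}_j(t)$. If yes, the current service interval starts within allocation-distance $L$ of $\bar\xi^{S,k}_j(t-1)$, and an overshoot exceeding $\epsilon r_k$ forces the centered process $\hat S^{k,t-1}_j$ to oscillate by at least $\epsilon\beta^{k}_j$ over a subinterval of $[0,L+\epsilon/r_k]$ of width $\epsilon/r_k$; by Lemma \ref{lem:sameDist} and the FCLT this has probability tending to $0$ uniformly in $t$, with no appeal to the cost bound. If no, then no type-$j$ work completes over the entire unit window, the queue builds up to order $r_k$ (not order $\epsilon$) on $[t-1/4,t]$, so $\int_{t-1}^t\hat Q^{k}_j(s)\,ds\gtrsim r_k$, and Markov plus the cost bound gives a probability of order $1/r_k$ after time-averaging — this is where the vanishing factor actually comes from. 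Your sketch correctly identifies the ``large overshoot forces queue build-up'' philosophy, but deploys it in the regime (window of length $O(1/r_k)$, build-up of size $O(\epsilon)$) where it is not strong enough to beat the averaging.
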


The first part of the next proposition follows in a straightforward manner from
Proposition \ref{prop:ArrServBnds}(i).
The main work is in proving the second part which says that the (time-averaged) allocation policy $\bar{B}^r$ is suitably close to $\rho$.  
\begin{proposition}
\label{prop:FuildScaledAllocConv}
Consider the sequence $\{r_{k},B^{k},T_{k}\}_{k\in\mathbb{N}}$ from Definition \ref{def:optSeq}.  For any $u\in[0,\infty)$ and $\epsilon>0$, we have
\begin{enumerate}[(i)]

\item $\displaystyle \lim_{k\rightarrow\infty}\sup_{t\geq 0}P\left(\sup_{s\in [0,u]}\left|\left(s-\bar{\Upsilon}^{A,k}(t)\right)^+-s\right|>\epsilon\right)=0$, and 

\item $\displaystyle \lim_{k\rightarrow\infty}\frac{1}{T_k}\int_0^{T_k}P\left(\sup_{s\in [0,u]}\Big|\left(\bar{B}^{k}(s+t)-\bar{\xi}^{S,k}(t)\right)^+-\rho s\Big|>\epsilon\right)dt=0$. 
%
%
\end{enumerate}
\end{proposition}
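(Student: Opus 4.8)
\emph{Part (i)} should follow directly from Proposition~\ref{prop:ArrServBnds}(i). Since $\bar{\Upsilon}^{A,k}(t)\ge 0$ coordinatewise, we have $(s-\bar{\Upsilon}^{A,k}(t))^+-s=-\big(s\wedge\bar{\Upsilon}^{A,k}(t)\big)$ coordinatewise, so that $\sup_{s\in[0,u]}|(s-\bar{\Upsilon}^{A,k}(t))^+-s|\le|\bar{\Upsilon}^{A,k}(t)|=r_k^{-1}|\hat{\Upsilon}^{A,k}(t)|$, uniformly in $u$. Hence the probability in (i) is at most $\sum_{j\in\AAA_J}P(\hat{\Upsilon}^{A,k}_j(t)>r_k\epsilon/\sqrt J)$, and for $k$ large (so that $r_k/\sqrt J\ge 1$) this is bounded by $\sum_j\sup_{t\ge 0}P(\hat{\Upsilon}^{A,k}_j(t)>\epsilon)$, which tends to $0$ by Proposition~\ref{prop:ArrServBnds}(i).

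\emph{Part (ii).} The plan is to reduce to a one-coordinate, single-lag statement and then invoke the decomposition \eqref{eq:328}. Write $w^k_j(t,s):=(\bar{B}^k_j(t+s)-\bar{\xi}^{S,k}_j(t))^+$. Because $|\cdot|$ is the Euclidean norm on $\R^J$, a union bound over coordinates reduces matters to showing, for each $j\in\AAA_J$ and each $\epsilon>0$, that $\frac1{T_k}\int_0^{T_k}P\big(\sup_{s\in[0,u]}|w^k_j(t,s)-\rho_j s|>\epsilon\big)\,dt\to 0$. By \eqref{eq:CLip} the map $s\mapsto\bar{B}^k_j(t+s)$ is $L$-Lipschitz, hence so is $s\mapsto w^k_j(t,s)$, and therefore $s\mapsto w^k_j(t,s)-\rho_j s$ is Lipschitz with a constant $L+\rho_j$ independent of $t$ and $k$. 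Covering $[0,u]$ by a fixed grid $0=s_0<\ldots<s_M=u$ of mesh $<\epsilon/(2(L+\rho_j))$ (so $M$ does not depend on $k$), and using $|\rho^k_j-\rho_j|\,u\to 0$, the problem is further reduced to showing: for each fixed $s\in[0,u]$ and each $\delta>0$,
\[
\frac1{T_k}\int_0^{T_k}P\big(|w^k_j(t,s)-\rho^k_j s|>\delta\big)\,dt\longrightarrow 0 .
\]

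With $s$ fixed, the plan is to apply \eqref{eq:328} with the roles of its time arguments $s,t$ played here by $t,t+s$, substitute $r_k^{-1}A^{k,t}_j(r_k^2\cdot)=\hat{A}^{k,t}_j(\cdot)+r_k\alpha^k_j(\cdot)$ and $r_k^{-1}S^{k,t}_j(r_k^2\cdot)=\hat{S}^{k,t}_j(\cdot)+r_k\beta^k_j(\cdot)$, and solve for $r_k\beta^k_jw^k_j(t,s)$, which gives
\[
w^k_j(t,s)-\rho^k_j s=-\rho^k_j\big(s\wedge\bar{\Upsilon}^{A,k}_j(t)\big)+\frac1{r_k\beta^k_j}\Big[\hat{Q}^k_j(t)-\hat{Q}^k_j(t+s)+r_k^{-1}(\mathcal{I}_A-\mathcal{I}_S)+\hat{A}^{k,t}_j\big((s-\bar{\Upsilon}^{A,k}_j(t))^+\big)-\hat{S}^{k,t}_j\big(w^k_j(t,s)\big)\Big],
\]
where $\mathcal{I}_A,\mathcal{I}_S\in\{0,1\}$ are the two indicator terms appearing in \eqref{eq:328}. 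Using $0\le w^k_j(t,s)\le\bar{B}^k_j(t+s)-\bar{B}^k_j(t)\le Lu$, $(s-\bar{\Upsilon}^{A,k}_j(t))^+\le u$, and $s\wedge\bar{\Upsilon}^{A,k}_j(t)\le r_k^{-1}\hat{\Upsilon}^{A,k}_j(t)$, this yields the pathwise bound
\[
|w^k_j(t,s)-\rho^k_j s|\le\frac{\rho^k_j}{r_k}\hat{\Upsilon}^{A,k}_j(t)+\frac1{r_k\beta^k_j}\Big[\hat{Q}^k_j(t)+\hat{Q}^k_j(t+s)+\frac2{r_k}+\sup_{v\in[0,u]}|\hat{A}^{k,t}_j(v)|+\sup_{w\in[0,Lu]}|\hat{S}^{k,t}_j(w)|\Big].
\]
I would then split $\{|w^k_j(t,s)-\rho^k_j s|>\delta\}$ according to which of these six summands exceeds $\delta/6$ and estimate the time-averaged probabilities term by term: the $\hat{\Upsilon}^{A,k}_j$-term is handled (even without time-averaging) by Proposition~\ref{prop:ArrServBnds}(i); the two $\hat{Q}$-terms by Markov's inequality and Tonelli, using that by Definition~\ref{def:optSeq} and $h_j>0$ one has $\frac1{T_k}\int_0^{T_k}E[\hat{Q}^k_j(t)]\,dt\le(J^*_E+1)/h_j$ and $\frac1{T_k}\int_0^{T_k}E[\hat{Q}^k_j(t+s)]\,dt\le\frac{T_k+u}{h_jT_k}(J^*_E+1)$, both of which stay bounded, so those contributions are $O(1/r_k)$; the $2/r_k$-term is deterministic and $o(1)$; and the two fresh-renewal terms by Markov's inequality together with the fact (Lemma~\ref{lem:sameDist}) that $\sup_{v\in[0,u]}|\hat{A}^{k,t}_j(v)|$ and $\sup_{w\in[0,Lu]}|\hat{S}^{k,t}_j(w)|$ have the same laws as $\sup_{v\in[0,u]}|\hat{A}^{k}_j(v)|$ and $\sup_{w\in[0,Lu]}|\hat{S}^{k}_j(w)|$, whose second moments are $O(1)$ uniformly in $t$ by Lemma~\ref{lem:SandABnd}, giving contributions $O(1/r_k^2)$. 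Summing the $M+1$ (finitely many, $k$-independent) grid contributions completes Part~(ii).

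\emph{Main obstacle.} The crux, as the paper emphasizes, is that there is no pathwise control on the oscillation of $\hat{Q}^k_j$ over an $O(1)$ time window — an admissible policy may over- or under-serve type $j$ arbitrarily aggressively over $[t,t+u]$ — so $\sup_{s\in[0,u]}|w^k_j(t,s)-\rho_j s|$ cannot be bounded uniformly in $t$, and only a time-averaged statement is available. The identity \eqref{eq:328} is exactly what makes the time average tractable: it isolates all window-dependent randomness into the fresh renewal processes $\hat{A}^{k,t},\hat{S}^{k,t}$, which by Lemma~\ref{lem:sameDist} have $t$-independent laws and are hence controlled uniformly in $t$, while $\hat{Q}^k_j$ enters only through its two endpoint values $\hat{Q}^k_j(t)$ and $\hat{Q}^k_j(t+s)$, each small on time-average by the ergodic cost bound of Definition~\ref{def:optSeq}. (On this route the residual-service estimate Proposition~\ref{prop:ArrServBnds}(ii) is not needed, since the relation between work served and jobs completed is absorbed into the $\hat{S}^{k,t}_j$ fluctuation term.) Arranging the reduction to a finite grid and the Tonelli/Markov bookkeeping so that they interlock cleanly with these inputs, while keeping every constant $k$-independent, is where the bulk of the (technical) effort lies.
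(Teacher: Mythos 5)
Your part (i) is exactly the paper's argument (the one--line bound $|(s-\bar\Upsilon^{A,k}(t))^+-s|\le|\bar\Upsilon^{A,k}(t)|$ plus Proposition \ref{prop:ArrServBnds}(i)), so nothing to compare there. Your part (ii) is correct but follows a genuinely different route. The paper works with an under-/over-allocation dichotomy: on the under-allocation event $\mathcal{U}^{j,k}_{\epsilon,t}$ it shows that $\hat Q^k_j$ stays above $c\alpha_j r_k\epsilon$ on a sub-interval of length $\epsilon/8$, so that $\int_0^u\hat Q^k_j(s+t)\,ds\gtrsim r_k\epsilon^2$, contradicting the ergodic cost bound on time average; it then disposes of over-allocation by invoking the capacity constraint of Definition \ref{def:admissible}(ii) together with $C=K\rho$ to show $\mathcal{O}^{l,k}_{\epsilon,t}\subset\bigcup_j\mathcal{U}^{j,k}_{\tilde\epsilon,t}$. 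You instead solve the exact balance identity \eqref{eq:328} for $r_k\beta^k_j\,(\bar B^k_j(t+s)-\bar\xi^{S,k}_j(t))^+$, which expresses the allocation discrepancy entirely through the two endpoint queue lengths, the residual arrival time, and the fresh-renewal fluctuations $\hat A^{k,t}_j,\hat S^{k,t}_j$; after dividing by $r_k$, the endpoint terms vanish on time average by Markov plus Definition \ref{def:optSeq}, and the fluctuation terms vanish uniformly in $t$ by Lemmas \ref{lem:sameDist} and \ref{lem:SandABnd}. I checked the identity you display (using $\bar B^k_j(t+s)-\bar B^k_j(t)-\bar\Upsilon^{S,k}_j(t)=\bar B^k_j(t+s)-\bar\xi^{S,k}_j(t)$ and $(s-\bar\Upsilon^{A,k}_j(t))^+=s-s\wedge\bar\Upsilon^{A,k}_j(t)$) and the bounds $0\le(\bar B^k_j(t+s)-\bar\xi^{S,k}_j(t))^+\le Lu$; they are correct, as are the Lipschitz grid reduction and the term-by-term Markov estimates. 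Your route buys two things: it treats over- and under-allocation symmetrically (over-service beyond $\rho_j s$ forces $\hat Q^k_j(t)\gtrsim r_k$, under-service forces $\hat Q^k_j(t+s)\gtrsim r_k$, so only nonnegativity of the queue, Definition \ref{def:admissible}(iii), is needed rather than the capacity constraint and $C=K\rho$), and it replaces the paper's sub-interval queue-buildup argument by a pointwise Markov bound, at the cost of needing the quantitative second-moment estimate of Lemma \ref{lem:SandABnd} where the paper only uses the qualitative tightness of Lemma \ref{CLTLLN}. Both are legitimate; your version is, if anything, shorter.
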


Recall the process $\hat U^r$ associated with a control $B^r$, as defined in Section 
\ref{sec:stateProc}. We now introduce the path occupation measures that are key to our analysis.
\begin{definition}
\label{def:occMeas}
Given $r\in\mathbb{N}$, $T\in \mathbb{R}_+$, and a control $B^r\in \mathcal{A}^r$ define the corresponding $\sM^I$-valued stochastic process $\hat{\upsilon}^r = \{\hat{\upsilon}_t^r : t \ge 0\}$ by $\hat\upsilon_t^r(\{0\}) = 0$ and 
\[
	\hat{\upsilon}^r_t( [0,s]) \doteq \hat U^r(t+s) - \hat U^r(t), \qquad s>0. 
\]
Define the random variables $\{\mu^{T,r}\}$ in $\sP(\mathbb{R}_{+}^{I}\times \sD^I\times \sM^I)$ by 
\begin{equation*}
\mu^{T,r} \doteq \frac{1}{T}\int_{0}^{T}\delta_{\left(\hat{W}^{r}(t),\hat{X}^{r}(t+\cdot)-\hat{X}^{r}(t),\hat{\upsilon}^{r}_t\right)}\,dt. 
\end{equation*}
\end{definition}
Since, $|\hat{U}_i^r(t+s)-\hat{U}_i^r(t)|\leq rC_i |t-s|$ for all $r\in\mathbb{N}$, $i\in\AAA_I$, and $t,s\in\mathbb{R}_+$, $\hat{\upsilon}^r_t$ is indeed a $\clm^I$ valued process.
Note that $\mu^{T,r}$ depends on the control $B^r$ but that this dependence is left out of the notation for the sake of brevity. Also, frequently we write $\mu^{T_k,r_k}$ as simply $\mu^k$.
In what follows,we will several times make use of the fact that a sequence $\{\gamma_k, k \in \NN\}$ of $\clp(\cls)$ valued random variables (where $\cls$ is some Polish space) is tight (namely the corresponding probability laws are relatively compact in $\clp(\clp(S))$), if and only if the sequence
$\{E(\gamma_k), k \in \NN\}$ is relatively compact in $\clp(\cls)$. (cf. \cite[Theorem 2.11]{buddup2019}).

\begin{theorem}
\label{thm:tightness}
Let $\{r_{k},B^{k},T_{k}\}_{k\in\mathbb{N}}$ be the sequence in Definition \ref{def:optSeq}, and let $\{\mu^k\}_{k\in\mathbb{N}}$ be the corresponding sequence of random measures given by Definition \ref{def:occMeas}.  Then $\{\mu^k\}_{k\in\mathbb{N}}$ is tight in $\sP(\mathbb{R}_{+}^{I}\times \sD^I\times \sM^I)$.
\end{theorem}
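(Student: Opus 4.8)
The plan is to use Lemma \ref{lem:compactMeasSpace} together with the reduction of tightness of $\clp(\cls)$-valued random variables to relative compactness of their means. Since $\{\mu^k\}$ are $\clp(\cls)$-valued with $\cls = \mathbb{R}_+^I \times \sD^I \times \sM^I$, it suffices (by \cite[Theorem 2.11]{buddup2019}) to show that the mean measures $\{E[\mu^k]\}$ are relatively compact in $\clp(\cls)$. By the characterization of tightness in product spaces, this reduces to showing tightness of the three marginals of $E[\mu^k]$ separately: the $\mathbb{R}_+^I$-marginal (the law of $\hat W^k(t)$ averaged over $t \in [0,T_k]$), the $\sD^I$-marginal (the law of the increment process $\hat X^k(t+\cdot) - \hat X^k(t)$ averaged over $t$), and the $\sM^I$-marginal (the law of the occupation measure $\hat\upsilon^k_t$ averaged over $t$).

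First I would handle the $\mathbb{R}_+^I$-marginal. Using Lemma \ref{lem:hHatBnds}, $|\hat W^r(t)| \le c_h \hat h(\hat W^r(t)) \le c_h\, h\cdot \hat Q^r(t)$ pointwise (since $\hat Q^r(t)$ is a feasible point in the linear program defining $\hat h(\hat W^r(t))$). Hence
\[
\frac{1}{T_k}\int_0^{T_k} E|\hat W^k(t)|\, dt \le c_h \cdot \frac{1}{T_k}\int_0^{T_k} E[h\cdot \hat Q^k(t)]\, dt = c_h\, J_E^{r_k,T_k}(B^k) \le c_h(J_E^* + 1),
\]
using Definition \ref{def:optSeq}. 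This uniform first-moment bound on the $\mathbb{R}_+^I$-marginal of $E[\mu^k]$ gives tightness of that marginal via Markov's inequality.

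Next, the $\sM^I$-marginal. By the second part of Lemma \ref{lem:compactMeasSpace}, it suffices to show that for each $n \in \NN$ the family of real random variables $\{\hat\upsilon^k_t([0,n)): t \ge 0, k\}$ averaged appropriately is tight, i.e. that $\frac{1}{T_k}\int_0^{T_k} E[\hat\upsilon^k_t([0,n))]\,dt$ is bounded. But $\hat\upsilon^r_t([0,n)) \le \hat\upsilon^r_t([0,n]) = \hat U^r(t+n) - \hat U^r(t)$, and from the Lipschitz bound \eqref{eq:CLip} (applied to $U^r$, which satisfies $0 \le U^r(t) - U^r(s) \le (t-s)C$), each coordinate satisfies $0 \le \hat U^r_i(t+n) - \hat U^r_i(t) \le r^{-1} \cdot r^2 n\, C_i / $ — wait, one must be careful with scaling: $\hat U^r(t) = r^{-1}U^r(r^2 t)$, so $\hat U^r_i(t+n) - \hat U^r_i(t) = r^{-1}(U^r_i(r^2(t+n)) - U^r_i(r^2 t)) \le r^{-1} \cdot r^2 n\, C_i = r n C_i$. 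That is not uniformly bounded. The correct bound comes instead from the decomposition \eqref{eq:stateq1}: $\hat U^r = \hat W^r - \hat w^r - \hat X^r$, so $\hat\upsilon^r_t([0,s]) = \hat U^r(t+s) - \hat U^r(t) = (\hat W^r(t+s) - \hat W^r(t)) - (\hat X^r(t+s) - \hat X^r(t))$, and since $\hat\upsilon^r_t$ is nondecreasing, $\hat\upsilon^r_t([0,n)) \le \hat W^r(t+n) + |\hat X^r(t+n) - \hat X^r(t)|$; the expectation of the first term is controlled by the bound above (shifting the integration variable, using that $J_E^{r_k,T}$ is controlled for all $T \ge T_k$ in Definition \ref{def:optSeq}, so that $\sup_{T}\frac{1}{T}\int_0^T E[h\cdot\hat Q^k] $ is effectively bounded up to the $1/r_k$ slack — one must check the tail $[T_k, T_k+n]$ contributes negligibly, e.g. by noting $\frac{1}{T_k}\int_{T_k}^{T_k+n} E|\hat W^k(t)|\,dt \le \frac{T_k+n}{T_k}\cdot\frac{1}{T_k+n}\int_0^{T_k+n}E|\hat W^k| \le \frac{T_k+n}{T_k}c_h(J_E^*+1)$), and the expectation of the second term is controlled by Lemma \ref{lem:SandABnd} together with \eqref{eq:XhatDiff}, after absorbing the drift and residual terms (here one uses $\sup_k r_k|\rho^{r_k}-\rho| < \infty$ from Condition \ref{heavytraffic} and Proposition \ref{prop:nextArrTimeBnd} / Proposition \ref{prop:ArrServBnds}(i) to bound $\hat\Upsilon^{A,k}$; the $\hat\Upsilon^{S,k}$ terms enter via $\bar B^k(t+n)-\bar B^k(t) \le Ln$). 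This gives a uniform bound on $\frac{1}{T_k}\int_0^{T_k}E[\hat\upsilon^k_t([0,n))]\,dt$ for each $n$, hence tightness of the $\sM^I$-marginal.

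Finally, the $\sD^I$-marginal. By Lemma \ref{lem:sameDist}, the shifted increment process $\hat X^r(t+\cdot) - \hat X^r(t)$ equals in law (conditionally on $\sG^r(t)$, hence unconditionally) a process built from fresh renewal data $(\hat A^{r,t}, \hat S^{r,t})$ distributed as $(\hat A^r, \hat S^r)$, plus residual/drift corrections as in \eqref{eq:XhatDiff}. One should show $\clc$-tightness of the law of $\hat X^r(t+\cdot) - \hat X^r(t)$, uniformly in $t$; the dominant term $KM^r \hat A^{r,t}(\cdot) - KM^r \hat S^{r,t}(\bar B^r(t+\cdot) - \bar B^r(t))$ is $\clc$-tight by Lemma \ref{CLTLLN}(i), the functional CLT (the time-change argument $\bar B^r(t+s) - \bar B^r(t) \le Ls$ is Lipschitz with constant not depending on $r$), the drift term $r(t-s)K(\rho^r-\rho)$ is Lipschitz with uniformly bounded constant by Condition \ref{heavytraffic}, and the residual terms $\hat\Upsilon^{A,r}, \hat\Upsilon^{S,r}$ and the $O(1/r)$ indicator terms vanish in probability (uniformly in $t$ for the $A$ parts, in time-average for the $S$ parts) by Propositions \ref{prop:ArrServBnds} and \ref{prop:FuildScaledAllocConv}; since these are averaged over $t \in [0,T_k]$ the time-averaged vanishing suffices. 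Averaging the resulting tight laws over $t$ preserves tightness (a mixture of a tight family is tight), so $E[\mu^k]$ restricted to the $\sD^I$ coordinate is relatively compact.

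Combining the three marginal tightness statements yields relative compactness of $\{E[\mu^k]\}$ in $\clp(\cls)$, hence tightness of $\{\mu^k\}$ in $\clp(\clp(\cls))$, completing the proof.

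\medskip

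\textbf{Main obstacle.} I expect the $\sM^I$-marginal bound to be the delicate point: the naive Lipschitz bound on $\hat U^r$ blows up like $r$, so one is forced to route the estimate through $\hat\upsilon^r_t([0,n)) \le \hat W^r(t+n) + |\hat X^r(t+n)-\hat X^r(t)|$ and then carefully control the $\hat X$-increment's first moment uniformly in $t$ using the decomposition \eqref{eq:XhatDiff} — in particular handling the residual-time terms $\hat\Upsilon^{S,k}$, which are only controlled in a time-averaged sense by Proposition \ref{prop:ArrServBnds}(ii). Keeping track of whether uniform-in-$t$ or merely time-averaged control is needed at each step, and making sure the slack from extending the integration interval past $T_k$ (to $T_k + n$) is harmless, is where the real care lies.
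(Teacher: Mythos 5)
Your plan is correct and follows essentially the same route as the paper: reduce to relative compactness of the mean measures and treat the three marginals separately, with the cost bound from Definition \ref{def:optSeq} and Lemma \ref{lem:hHatBnds} handling the $\mathbb{R}_+^I$ marginal, the decomposition \eqref{eq:XhatDiff} together with Lemmas \ref{lem:sameDist} and \ref{CLTLLN} and Proposition \ref{prop:ArrServBnds} handling the $\sD^I$ marginal, and the identity $\hat U^r = \hat W^r - \hat w^r - \hat X^r$ handling the $\sM^I$ marginal via Lemma \ref{lem:compactMeasSpace}. The one (harmless) difference is that the paper controls the $\hat X$-increment appearing in the third marginal simply by invoking the already-established modulus-of-continuity bound for the second marginal — a time-averaged probability estimate rather than the first-moment estimate you sketch, which cleanly sidesteps the difficulty you correctly flag that $\hat\Upsilon^{S,k}$ is controlled only in time-averaged probability and not in expectation.
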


\begin{proof}
We will prove the result by showing that each of the three marginals are tight.  We will start with the first marginals, $\{\mu^k_{(1)}\}_{k=1}^\infty$.

Due to Lemma \ref{lem:hHatBnds} there exists $c_h<\infty$ such that for all $w\in \mathbb{R}^I_+$ we have $|w|\leq c_h \hat{h}(w)$, which gives
\begin{align*}
&E\left[\int_{\mathbb{R}_{+}^{I}}|w|\mu_{(1)}^k(dw)\right]=E\left[\frac{1}{T_{k}}\int_{0}^{T_{k}}|\hat{W}^{k}(s)|ds\right]\leq E\left[\frac{c_h}{T_{k}}\int_{0}^{T_{k}} h\cdot \hat{Q}^{k}(s)ds\right]\leq  c_h\left(J_{E}^{*}+\frac{1}{r_{k}}\right),
\end{align*}
where the last inequality uses the properties of $(r_k,B^{k},T_k)$ given in Definition \ref{def:optSeq}.  The tightness of the first marginals follows. 

 We will now show that the second marginals, $\{\mu^k_{(2)}(dx)\}_{k=1}^\infty$, are tight.
 It suffices to show that for any $\epsilon_1,\epsilon_2>0$ and $T \in [0,\infty)$, there exists $\delta>0$ and $K \in[0,\infty)$ such that for all $k\geq K$ we have
\begin{equation}
\label{eq:mu2ndMargTight}
E\left[\mu_{(2)}^k\left(\sup_{s,t\in [0,T], |t-s|< \delta}|x(t)-x(s)| >\epsilon_1 \right) \right]<\epsilon_2, 
\end{equation}
where $x$ denotes the coordinate variable on $\mathcal{D}^I$  (note that this actually gives the stronger $\sC$-tightness result).
Observe that the left side of equation \eqref{eq:mu2ndMargTight} equals 
\begin{equation*}
\frac{1}{T_k}\int_0^{T_k}P\left(\sup_{s,t\in [u,u+T], |t-s|< \delta}\left|\hat{X}^{k}(t)-\hat{X}^{k}(s) \right| >\epsilon_1\right)du.
\end{equation*}
Recall \eqref{eq:XhatDiff} and that for all $s,t\in [0,\infty)$, $j\in \AAA_J$, and $r\in\mathbb{N}$ we have $|\bar{B}^r(t)-\bar{B}^r(s)|\leq L|t-s|$,
where  $L = \max_i\{C_i\}$ 

Consequently, to prove \eqref{eq:mu2ndMargTight} it is sufficient to show that for any $\epsilon_1,\epsilon_2>0$ and $T\in[0,\infty)$, there exists $\delta>0$ and $K\in[0,\infty)$ such that for all $k\geq K$ we have the following: 
\begin{enumerate}[(a)]

\item $\displaystyle \frac{1}{T_k}\int_0^{T_k}P\left(\sup_{s,t\in [0,T], |t-s|< \delta}\left|\hat{A}^{k,u}(t)-\hat{A}^{k,u}(s) \right| >\epsilon_1\right)du<\epsilon_2$, 

\item $\displaystyle \frac{1}{T_k}\int_0^{T_k}P\left(\sup_{s,t\in [0,LT], |t-s|< L\delta}\left|\hat{S}^{k,u}(t)-\hat{S}^{k,u}(s) \right| >\epsilon_1\right)du<\epsilon_2$,

\item $\displaystyle \frac{1}{T_k}\int_0^{T_k}P\left(r_k |\bar{\Upsilon}^{A,k}(u)|+r_k |\bar{\Upsilon}^{S,k}(u)|>\epsilon_1\right)du<\epsilon_2$, and 

\item $\displaystyle \sup_{s,t\in[0,T],|t-s|<\delta}|(t-s) r_k (\rho^{k}-\rho)|\leq \epsilon_1$. 

\end{enumerate}
%
The inequalities (a) and (b) follow from Lemma \ref{CLTLLN} (i) and Lemma \ref{lem:sameDist}, (c) follows from Proposition \ref{prop:ArrServBnds}, and (d) follows from Condition \ref{heavytraffic} and the sentence below it.  This gives \eqref{eq:mu2ndMargTight} and proves the tightness of the second marginals.

Now we show that the sequence of third marginals, $\{\mu^k_{(3)}(d\upsilon)\}_{k=1}^\infty$, is tight.    Due to Lemma \ref{lem:compactMeasSpace}, it is sufficient to show that for any $n\in \mathbb{N}$ and $\epsilon>0$, there exists corresponding $M,K\in[0,\infty)$ such that 
\begin{equation}
\label{eq:timeAveMeasTightLargeK}
\sup_{k\geq K} E\left[\mu_{(3)}^k \left( |\upsilon ([0,n])|> M \right)\right]<\epsilon, 
\end{equation}
where $\upsilon$ denotes the coordinate variable on $\mathcal{M}^I$. 
Note that
\begin{equation*}
E\left[\mu_{(3)}^k\left(|\upsilon ([0,n])|> M\right)\right]= \frac{1}{T_k}\int_0^{T_k}P\left(|\hat{U}^{k}(u+n)-\hat{U}^{k}(u)|>M\right)du.
\end{equation*}
From \eqref{eq:stateq1}, for all $u\geq 0$ and $n,k\in\mathbb{N}$, we have
\begin{equation*}
\hat{U}^{k}(u+n)-\hat{U}^{k}(u)=\hat{W}^{k}(u+n)-\hat{W}^{k}(u)-(\hat{X}^{k}(u+n)-\hat{X}^{k}(u)), 
\end{equation*}
so 
\begin{equation*}
|\hat{U}^{k}(u+n)-\hat{U}^{k}(u)| \leq |\hat{W}^{k}(u+n)|+|\hat{W}^{k}(u)|+|\hat{X}^{k}(u+n)-\hat{X}^{k}(u)|. 
\end{equation*}
Thus, tightness of third marginals will follow if we can show that there exists corresponding $M,K\in[0,\infty)$ such that
\begin{equation}
\label{eq:thirdMargTightX}
\sup_{k\geq K} \frac{1}{T_k}\int_0^{T_k}P\left(|\hat{X}^{k}(u+n)-\hat{X}^{k}(u)|>M\right)du\leq \epsilon, 
\end{equation}
and
\begin{equation}
\label{eq:thirdMargTightW}
\sup_{k\geq K} \frac{1}{T_k}\int_0^{T_k}P\left( |\hat{W}^{k}(u+n)|+|\hat{W}^{k}(u)|>M\right)du\leq \epsilon.
\end{equation}
The statement in \eqref{eq:thirdMargTightX} is a consequence of the tightness property shown in \eqref{eq:mu2ndMargTight}.
 In addition, due to Lemma \ref{lem:hHatBnds} there exists $c_h<\infty$ such that for all $w\in \mathbb{R}^I_+$ we have $c_h\hat{h}(w)\geq |w|$ which gives
\begin{align*}
\frac{1}{T_{k}}\int_{0}^{T_{k}}\left(E\left[|\hat{W}^{k}(u+n)|\right]+E\left[|\hat{W}^{k}(u)|\right]\right)du&\leq\frac{2c_h}{T_{k}}\int_{0}^{T_{k}+n}E\left[h\cdot \hat{Q}^{k}(u)\right]du\\
&\leq 2c_h\frac{T_{k}+n}{T_{k}}\left(J^{*}_{E}+\frac{1}{r_{k}}\right), 
\end{align*}
where the last line uses the properties of $(r_k,B^{k},T_k)$ given in Definition \ref{def:optSeq}.  From this \eqref{eq:thirdMargTightW} follows readily.
\end{proof}

The following continuous approximations of functions in $\sD^d$ will be useful.
\begin{definition}\label{def:psidelta}
For any $\delta>0$ and $x\in\sD^1$ define $\psi_\delta(x)\in \sC^1$ by
\begin{equation*}
\psi_\delta (x)(t)\doteq \frac{1}{\delta}\int_{t}^{t+\delta} x(u)du, \qquad t \in [0,\infty). 
\end{equation*}
For $\mu\in \sM^1$, we will write $\psi_\delta(\mu)$ to denote $\psi_\delta(\mu([0,\cdot])$.  For $x\in \sD^d$ (resp. $\mu\in\sM^d$) we define $\psi_\delta$ component-wise so $\psi_\delta(x)_i=\psi_\delta(x_i)$ (resp. $\psi_\delta(\mu)_i=\psi_\delta(\mu_i)$) for all $i\in\AAA_d$.  
\end{definition}

It is straightforward to verify that for fixed $\delta>0$, and $x\in \sD^1, \mu \in \sM^1$, the functions $\psi_\delta(x)(\cdot)$ and
 $\psi_\delta(\mu)(\cdot)$ are continuous, and due to right continuity we have $\lim_{\delta\rightarrow 0}\psi_\delta(x)(t)=x(t)$, $\lim_{\delta\rightarrow 0}\psi_\delta(\mu)(t)=\mu([0,t])$ for all $t\geq 0$.  In addition,  it is easily verified that for any $\delta>0$ and $\mu\in \sM^1$ the function $\psi_\delta(\mu)(\cdot)$ is nondecreasing. Thus, for $\mu \in \sM^d$, $\psi_\delta(\mu)\in\sN_+^d$.  The following lemma says that the map $\psi_{\delta}$ is continuous.

\begin{lemma}\label{lem:contApproxDistFun}\label{lem:contApproxSkorok} Consider sequences $\{\mu^n\}$ and $\{x^n\}$ in $\mathcal{M}^d$ and $\mathcal{D}^d$, respectively, and let $\delta > 0$ be arbitrary. 

\begin{enumerate}

\item[(i)]
If $\mu^n \rightarrow \mu$ in $\sM^d$, then $\psi_\delta(\mu^n)\rightarrow \psi_\delta(\mu)$ in $\sC^d$.

\item[(ii)] 
If $x^n\rightarrow x$ in $\sD^d$, then $\psi_\delta(x^n)\rightarrow\psi_\delta(x)$ in $\sC^d$.

\end{enumerate}
\end{lemma}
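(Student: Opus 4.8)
The plan is to prove the two statements essentially in parallel, since in both cases the argument reduces to a dominated-convergence-type passage to the limit, and the local uniform topology on $\sC^d$ means it suffices to work on a fixed compact interval $[0,T]$. Throughout, it is enough to treat the case $d=1$, since the topologies on $\sM^d$ and $\sD^d$ are the product topologies and $\psi_\delta$ acts componentwise.

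For part (ii), suppose $x^n \to x$ in $\sD^1$. Fix $T>0$; by standard facts about the Skorokhod topology there is a sequence of increasing homeomorphisms $\lambda_n$ of $[0,T+\delta]$ with $\sup_t|\lambda_n(t)-t|\to 0$ and $\sup_t|x^n(\lambda_n(t))-x(t)|\to 0$. The key observation is that $\psi_\delta(x^n)(t) = \tfrac1\delta\int_t^{t+\delta} x^n(u)\,du$, and the time-changed integrand $x^n(\lambda_n(\cdot))$ converges uniformly to $x$ on $[0,T+\delta]$ while being uniformly bounded (a Skorokhod-convergent sequence is uniformly bounded on compacts). Changing variables $u = \lambda_n(v)$ and controlling the error from $\lambda_n$ being close to the identity (the measure $d\lambda_n$ is close to Lebesgue measure in a suitable sense, e.g. because $\lambda_n$ is Lipschitz-comparable to the identity, or more simply by splitting $\int_t^{t+\delta}x^n\,du$ into $\int x^n\circ\lambda_n\,dv$ over $\lambda_n^{-1}([t,t+\delta])$ and estimating the symmetric-difference of that interval with $[t,t+\delta]$, which has length $o(1)$ uniformly in $t$), we get $\sup_{t\in[0,T]}|\psi_\delta(x^n)(t)-\psi_\delta(x)(t)|\to 0$. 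Since $T$ was arbitrary, this is convergence in $\sC^1$.

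For part (i), suppose $\mu^n\to\mu$ in $\sM^1$, i.e. $\mu^n\to\mu$ vaguely (equivalently weakly on each $[0,k)$, by the remark following the definition of $d_{\sM}$). Write $g^n(t)=\mu^n([0,t])$, $g(t)=\mu([0,t])$. The crucial point is that $\psi_\delta(\mu^n)(t)=\tfrac1\delta\int_t^{t+\delta}g^n(u)\,du$ can be rewritten by Fubini as an integral of a fixed bounded continuous (in fact Lipschitz, compactly supported) function against $\mu^n$: indeed $\int_t^{t+\delta}g^n(u)\,du = \int_t^{t+\delta}\mu^n([0,u])\,du = \int_{[0,\infty)}\big(\min(t+\delta,s)\wedge(\cdot) - \ldots\big)$; more precisely $\tfrac1\delta\int_t^{t+\delta}\mathbf 1_{[0,u]}(s)\,du = \tfrac1\delta\,\big((t+\delta-s)\wedge\delta\big)^+ =: \phi_{t,\delta}(s)$, which for each fixed $t$ is a bounded, Lipschitz function of $s$ with compact support contained in $[0,t+\delta]$. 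Hence $\psi_\delta(\mu^n)(t)=\int\phi_{t,\delta}\,d\mu^n \to \int\phi_{t,\delta}\,d\mu = \psi_\delta(\mu)(t)$ pointwise in $t$, by the definition of vague convergence (using that $\phi_{t,\delta}\in\clc_0([0,\infty))$). To upgrade pointwise to local uniform convergence, note that each $\psi_\delta(\mu^n)$ is nondecreasing, the limit $\psi_\delta(\mu)$ is continuous, and pointwise convergence of monotone functions to a continuous limit is automatically uniform on compacts (a Dini/Pólya-type argument); combined with $\sup_n\mu^n([0,T+\delta])<\infty$ on compacts (which follows from vague convergence, e.g. from the uniform bound implicit in $d_{\sM,k}$-convergence), this gives $\psi_\delta(\mu^n)\to\psi_\delta(\mu)$ in $\sC^1$.

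The main obstacle is the time-change bookkeeping in part (ii): one must show that the $o(1)$-closeness of $\lambda_n$ to the identity genuinely produces a uniform-in-$t\in[0,T]$ error in the sliding-window averages, not just a pointwise one. This is handled by the uniform boundedness of $\{x^n\}$ on compacts together with the uniform (in $t$) smallness of the Lebesgue measure of the symmetric difference $[t,t+\delta]\,\triangle\,\lambda_n([t,t+\delta]')$; it is routine but must be written with some care. Part (i) has no comparable difficulty once the Fubini rewriting $\psi_\delta(\mu)(t)=\int\phi_{t,\delta}\,d\mu$ is in hand. Given that the paper states the proof is omitted, I would present only these observations rather than the full $\veps$-$\delta$ details.
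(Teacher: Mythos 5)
Your proposal is correct; both parts go through. It does, however, take a somewhat different route from the paper's sketch. For part (i) the paper argues that vague convergence gives $\mu^n([0,t])\to\mu([0,t])$ for a.e.\ $t$ together with $\sup_n\mu^n([0,t])<\infty$, so that dominated convergence yields pointwise convergence of $\psi_\delta(\mu^n)$ while the uniform local bound yields equicontinuity, hence local uniform convergence; you instead rewrite $\psi_\delta(\mu^n)(t)=\int\phi_{t,\delta}\,d\mu^n$ with $\phi_{t,\delta}\in\clc_0([0,\infty))$, obtaining pointwise convergence at \emph{every} $t$ straight from the definition of vague convergence, and then upgrade via P\'olya's theorem using monotonicity of $\psi_\delta(\mu^n)$. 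Both are valid; your duality trick avoids having to establish a.e.\ convergence of the distribution functions, while the paper's equicontinuity step is the one that transfers verbatim to part (ii). For part (ii) the paper again uses a.e.\ pointwise convergence (Skorokhod convergence gives $x^n(u)\to x(u)$ at every continuity point of $x$, a co-countable set) plus local uniform boundedness, then dominated convergence for pointwise convergence of the averages and the common local Lipschitz bound on $\psi_\delta(x^n)$ for uniformity; your explicit time-change argument with the symmetric-difference estimate is correct but heavier than necessary, and carries two small technicalities you should flag if you write it out: the homeomorphisms $\lambda_n$ should be taken on $[0,T']$ for a continuity point $T'>T+\delta$ of $x$, and they need not be absolutely continuous, so the interval-splitting variant you mention (rather than a literal change of variables) is the safe one.
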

To see the proof of part (i), it is sufficient to consider $d=1$ and note that $\mu^n \rightarrow \mu$ implies $\sup_n\mu^n([0,t])  < \infty$ for all $t$ and $\mu^n([0,t]) \rightarrow \mu([0,t])$ for a.e. $t$.  This implies the pointwise convergence of $\psi_\delta(\mu^n)\rightarrow \psi_\delta(\mu)$ and that $\{\psi_\delta(\mu^n)\}_{n\ge 1}$ is equicontinuous from which uniform convergence follows.  The statement in part (ii) is standard, and follows using an argument similar to that employed for the proof of part (i) combined with the results of \cite[Problem 3.11.26]{ethier2009markov}. We omit the details.

\begin{theorem}
\label{thm:conv}
Consider a subsequence $\{\mu^{m}\}_{m=1}^{\infty}$ of the tight sequence in Theorem \ref{thm:tightness} that converges weakly to a random variable $\mu^{*}\in\sP(\mathbb{R}_{+}^{I}\times \sD^I\times \sM^I)$ defined on some probability space $(\Omega^*, \mathcal{F}^*, P^*)$. Let $(w, x, \upsilon)$ denote the coordinate variables on $\mathbb{R}_{+}^{I}\times \sD^I\times \sM^I$. Then, $P^*$-a.s., $\mu^*$ satisfies the following. 
\begin{enumerate}[(i)]
\item Under $\mu^*$, $x(\cdot)$ is  a Brownian motion with drift $\theta$ and covariance $\Sigma$ with respect to the filtration $\sF^*(t) = \sigma\{w, x(s), \upsilon([0,s]) : s\le t\}$,   
\item  if $\mathbf{w}(0) \doteq w + \upsilon(\{0\})$ and ${\bf w}(t) \doteq w + x(t) + \upsilon([0,t])$ for $t > 0$, then ${\bf w}(t) \ge 0$ for all $t\ge0$, $\mu^*$-a.s., and 
\item if $\mathbf{u}(0) \doteq 0$ and ${\bf u}(t)\doteq \upsilon((0,t])$ for $t > 0$, then 
under $\mu^*$, $({\bf w}(t + \cdot),{\bf u}(t+\cdot) - \mathbf{u}(t)) \overset{d}{=} ({\bf w}(\cdot),{\bf u}(\cdot))$ on $\cld^{2I}$, for every $t \ge 0$. That is, 
\[
	\mu^*\left(({\bf w}(t + \cdot),{\bf u}(t+\cdot) - \mathbf{u}(t)) \in \cdot\right) = \mu^*\left( ({\bf w}(\cdot),{\bf u}(\cdot)) \in \cdot \right), 
\]
for all $t$.  
\end{enumerate}
\end{theorem}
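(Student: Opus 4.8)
The plan is to identify the three structural features of a weak limit point $\mu^*$ one at a time. By the Skorokhod representation theorem I may assume $\mu^m\to\mu^*$ almost surely in $\mathcal P(\R_+^I\times\sD^I\times\sM^I)$, and since the second marginals were shown to be $\sC$-tight in the proof of Theorem \ref{thm:tightness}, $\mu^*$ is supported on $\R_+^I\times\sC^I\times\sM^I$ $P^*$-a.s. The basic algebraic input, valid for every $m$, is
\[
\hat W^{m}(u+s)=\hat W^{m}(u)+\bigl(\hat X^{m}(u+s)-\hat X^{m}(u)\bigr)+\hat\upsilon^{m}_u([0,s]),
\]
immediate from \eqref{eq:stateq1} and Definition \ref{def:occMeas}. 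A recurring technical device will be the mollification $\psi_\delta$ of Definition \ref{def:psidelta}: since $x\mapsto\psi_\delta(x)(s)$ and $\upsilon\mapsto\psi_\delta(\upsilon)(s)$ are continuous (Lemmas \ref{lem:contApproxDistFun}--\ref{lem:contApproxSkorok}) whereas $x\mapsto x(s)$, $\upsilon\mapsto\upsilon([0,s])$ are not, I will work with the mollified coordinates, pass to the limit by weak convergence, and then remove the mollification using $\psi_\delta(x)(s)\to x(s)$, $\psi_\delta(\upsilon)(s)\to\upsilon([0,s])$ as $\delta\downarrow0$ (right-continuity).

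For part (ii): for rationals $s>0$, $\epsilon>0$, $\delta>0$ the set $\{(w,x,\upsilon):w+\psi_\delta(x)(s)+\psi_\delta(\upsilon)(s)<-\epsilon\}$ is open, and its $\mu^m$-measure equals $\tfrac1{T_m}\int_0^{T_m}\mathbf 1\{\tfrac1\delta\int_s^{s+\delta}\hat W^{m}(u+v)\,dv<-\epsilon\}\,du=0$, because $\hat W^{m}\ge 0$; hence it has $\mu^*$-measure zero $P^*$-a.s.\ by the portmanteau theorem. Letting $\delta\downarrow0$, then $\epsilon\downarrow0$, then taking the union over rational $s>0$, yields $w+x(s)+\upsilon([0,s])\ge0$ for all rational $s>0$, $\mu^*$-a.s.; right-continuity of $s\mapsto x(s)+\upsilon([0,s])$ and $\mathbf w(0)=w+\upsilon(\{0\})\ge0$ finish it.

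For part (iii): a time-shift acts almost trivially on a path-occupation measure. For rationals $0\le r_1<\dots<r_k$, bounded continuous $g$, and $\delta>0$, the functional $\Lambda_{t,\delta}(w,x,\upsilon)=\bigl((w+\psi_\delta(x)(t+r_i)+\psi_\delta(\upsilon)(t+r_i))_i,(\psi_\delta(\upsilon)(t+r_i)-\psi_\delta(\upsilon)(t))_i\bigr)$ is continuous, and by the displayed identity $\Lambda_{t,\delta}$ evaluated at the sample point of $\mu^m$ at time $u$ equals $\bigl((\psi_\delta(\hat W^{m})(u+t+r_i))_i,(\psi_\delta(\hat U^{m})(u+t+r_i)-\psi_\delta(\hat U^{m})(u+t))_i\bigr)$; hence $\int g\circ\Lambda_{t,\delta}\,d\mu^m$ and $\int g\circ\Lambda_{0,\delta}\,d\mu^m$ differ by at most $2t\|g\|_\infty/T_m\to0$. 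Passing to the limit gives $\int g\circ\Lambda_{t,\delta}\,d\mu^*=\int g\circ\Lambda_{0,\delta}\,d\mu^*$ $P^*$-a.s.; letting $\delta\downarrow0$ identifies the finite-dimensional distributions of $(\mathbf w(t+\cdot),\mathbf u(t+\cdot)-\mathbf u(t))$ and $(\mathbf w(\cdot),\mathbf u(\cdot))$ under $\mu^*$ at rational times, and right-continuity (together with $\mathcal B(\sD^{2I})=\sigma\{\pi_v:v\ge0\}$) upgrades this to equality of laws on $\sD^{2I}$, first for rational $t$ and then for all $t\ge0$.

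Part (i) is the core. I will verify the martingale problem for $\mathcal L f=\tfrac12\sum_{ij}\Sigma_{ij}\partial_{ij}f+\theta\cdot\nabla f$: for $f\in\sC_0^2(\R^I)$, rationals $s<t$, and $G=g_0(w)g_1(\psi_\delta(x)(s_1),\dots)g_2(\psi_\delta(\upsilon)(s_1'),\dots)$ with $s_i,s_i'\le s-\delta$ (which suffices to check adaptedness to $\{\mathcal F^*(\cdot)\}$ after $\delta\downarrow0$), I want $E^{\mu^*}[A]=0$ $P^*$-a.s.\ with $A=(f(x(t))-f(x(s))-\int_s^t\mathcal Lf(x(v))\,dv)\,G$. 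Writing $H^m(u)$ for $A$ evaluated at the $\mu^m$-sample point at time $u$, so $E^{\mu^m}[A]=\tfrac1{T_m}\int_0^{T_m}H^m(u)\,du$, boundedness and $\mu^*$-a.s.\ continuity of $\nu\mapsto(E^\nu[A])^2$ give $E^{P^*}[(E^{\mu^*}[A])^2]=\lim_m E[(\tfrac1{T_m}\int_0^{T_m}H^m(u)\,du)^2]$, so it is enough to show this limit vanishes. Expanding the square, the strip $\{|u-u'|<t-s\}$ contributes $O(1/T_m)$; for $u'\ge u+t-s$, writing $H^m(u')=P^m_{u'}F^m_{u'}$ with $F^m_{u'}$ the ``past factor,'' one checks from admissibility (Definition \ref{def:admissible}) and the definition of $\tau^m(\cdot)$ that $H^m(u)$ and $F^m_{u'}$ are $\mathcal G^m(u'+s)$-measurable, so $E[H^m(u)H^m(u')]=E[H^m(u)F^m_{u'}\,E[P^m_{u'}\mid\mathcal G^m(u'+s)]]$. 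The heart of the matter is then $\tfrac1{T_m}\int_0^{T_m}E\,\bigl|E[P^m_{u'}\mid\mathcal G^m(u'+s)]\bigr|\,du'\to0$: by \eqref{eq:XhatDiff} and Lemma \ref{lem:sameDist}, $\hat X^m(u'+s+\cdot)-\hat X^m(u'+s)$ is, up to errors, an independent (given $\mathcal G^m(u'+s)$) copy in law of $KM^m(\hat A^{m}-\hat S^{m}(\rho\iota))(\cdot)+rK(\rho^m-\rho)\iota(\cdot)$, whose law converges by Lemma \ref{CLTLLN}(i) to that of the $(\theta,\Sigma)$-Brownian motion, the errors being controlled uniformly in time by Propositions \ref{prop:ArrServBnds}(i), \ref{prop:FuildScaledAllocConv}(i) and in time-average by Propositions \ref{prop:ArrServBnds}(ii), \ref{prop:FuildScaledAllocConv}(ii); combined with the identity $E[f(y+\hat X(a))-f(y)-\int_0^a\mathcal L f(y+\hat X(v))\,dv]=0$ for a $(\theta,\Sigma)$-Brownian motion $\hat X$, this yields the claim. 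Continuity of $x$ under $\mu^*$ and $x(0)=0$ (immediate from Definition \ref{def:occMeas}) then identify $x$ as a $(\theta,\Sigma)$-Brownian motion with respect to $\{\mathcal F^*(t)\}$. I expect this last estimate --- extracting, after a generic time, an increment of $\hat X^m$ that is simultaneously asymptotically Brownian and conditionally independent of the past despite the control $\bar B^m$ entering through $\hat S^m(\bar B^m(\cdot))$, and with errors small enough to survive the double time-average --- to be the main obstacle; it is precisely where the time-averaged Propositions \ref{prop:ArrServBnds}(ii) and \ref{prop:FuildScaledAllocConv}(ii) do the work, in contrast to the uniform-in-time estimates that sufficed for the discounted problem of \cite{budcon1}.
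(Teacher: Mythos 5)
Your proposal is correct and follows essentially the same route as the paper's proof: the portmanteau/lower-semicontinuity argument with the mollification $\psi_\delta$ for part (ii), the $O(t/T_m)$ boundary-term computation for the shifted occupation integrals in part (iii), and for part (i) the expansion of the squared time-average, restriction to $|u-u'|\ge t-s$ so the past factors are $\sG^{m}(u'+s)$-measurable, and the replacement of the post-$(u'+s)$ increment of $\hat X^{m}$ by a conditionally independent copy via Lemma \ref{lem:sameDist}, \eqref{eq:XhatDiff}, Lemma \ref{CLTLLN}, and the time-averaged estimates of Propositions \ref{prop:ArrServBnds}(ii) and \ref{prop:FuildScaledAllocConv}(ii). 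You have correctly identified the crux, and the step you flag as the main obstacle is handled in the paper exactly as you outline, via the auxiliary process $\breve X^{m,u+s}$.
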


\begin{proof} 

We first prove (i).  Recall in the proof of Theorem \ref{thm:tightness} we actually proved $\sC$-tightness of the second marginals 
$\{\mu^{m}_2(\cdot)\}_{m=1}^{\infty}$ which gives $\mu^*(x\in \sC^I)=1$ $P^*$-a.s.  
Fix $k \in \NN$ and $0\le r_1\le r_2 \le \cdots \le r_k\le \tilde s <t$,
$\Phi \in \clc_b(\RR^{(1+2k)I})$, $f \in \clc_0^2(\RR^{I})$. Define
$\phi: \RR_+^I \times \cld^I \times \clm^I \to \RR$ as, for $(w,x,\upsilon) \in \RR_+^I \times \cld^I \times \clm^I$,
$$\phi(w,x,\upsilon) \doteq \Phi(w, x(r_1), \ldots x(r_k), \upsilon([0,r_1]), \ldots \upsilon([0,r_k])).$$
To prove (i) it suffices to show that
\begin{equation}
\label{eq:longTermCostBMProof}
E^* \left[\left(\int_{\R_+^I \times \sD^I \times \sM^I} \phi\left( w, x, \upsilon  \right)\left( f(x(t)) - f(x(\tilde{s})) - \int_{\tilde{s}}^{t} \sL f(x(z))\,dz \right)\, \mu^*(dw, \,dx, \,d\upsilon)\right)^2 \right] = 0,
\end{equation}
where $\sL$ is the generator of a Brownian motion with drift $\theta$ and covariance $\Sigma$. Namely, 
\[
	\sL f(x) = \sum_{i=1}^I \theta_i \frac{\partial f}{\partial x_i}(x) + \frac{1}{2}\sum_{i=1}^I\sum_{j=1}^I \Sigma_{i,j} \frac{\partial^2 f}{\partial x_i\partial x_j}(x),\;\; f \in \clc_0^2(\RR^I).
\]
One of the issues to be careful about is that we do not know that $\mu^*(\upsilon[0, \cdot) \in \clc^I)= 1$. Thus to employ a weak convergence argument, some mollification of $\upsilon$ is useful.
Fix $\delta \in (0, t-\tilde s)$ and define
$\phi^{\delta}: \RR_+^I \times \cld^I \times \clm^I \to \RR$ as, for $(w,x,\upsilon) \in \RR_+^I \times \cld^I \times \clm^I$,
$$\phi^{\delta}(w,x,\upsilon) \doteq \Phi(w, x(r_1), \ldots x(r_k), \psi_{\delta}(\upsilon)(r_1), \ldots \psi_{\delta}(\upsilon)(r_k)).$$
We will now argue that 
\begin{equation}
\label{eq:longTermCostBMProofApprox}
E^* \left[\left(\int_{\R_+^I \times \sD^I \times \sM^I} 
\phi^{\delta}\left( w, x, \upsilon  \right)
\left( f(x(t)) - f(x(\tilde s+\delta)) - \int_{\tilde s+\delta}^{t} \sL f(x(z))\,dz \right)\, \mu^*(dw, \,dx, \,d\upsilon)\right)^2 \right] = 0, 
\end{equation}
The claimed identity in \eqref{eq:longTermCostBMProof} follows immediately from this on noting that $\mu^*(x\in \sC^I)=1$ $P^*$-a.s.,
using the fact that for any $\mu \in \clm^d$, $\lim_{\delta\rightarrow 0}\psi_\delta(\mu)(t)=\mu([0,t])$ for all $t\geq 0$, and sending 
$\delta \to 0$ in the above display.


We now show (\ref{eq:longTermCostBMProofApprox}) to complete the proof of (i).  Let $s = \tilde s + \delta$.
For notational convenience, for any $u\in [0,\infty)$, $x\in \sD^I$, $\xi\in\sD^I_+$, and an $\sM^I$-valued process $\nu = \{\nu_u : u \ge 0\}$, we define 
\[
F^{u}_{s,t}(x) \doteq  f(x(u+t)-x(u)) - f(x(u+s)-x(u)) - \int_{s}^{t} \sL f(x(u+z)-x(u))\,dz, 
\]
and
\[
G^{u}_{\delta}(\xi,x,\nu) \doteq   {\phi}^{\delta}\left( \xi(u), x(u+\cdot)-x(u), \nu_u  \right).
\]
Due to the weak convergence $\mu^{m} \to \mu^*$, and recalling that $\mu^*(x\in \sC^I)=1$ $P^*$-a.s. and ${\phi}^{\delta}$ is a bounded function that is continuous for $x\in \sC^I$ due to Lemma \ref{lem:contApproxDistFun}, we have
\begin{equation}\label{eq:BMProofMeasTimeAve}
\begin{aligned}
&E^* \left[\left(\int_{\R_+^I \times \sD^I \times \sM^I} {\phi}^{\delta}\left( w, x, \upsilon  \right)\left( f(x(t)) - f(x(s)) - \int_s^{t} \sL f(x(z))\,dz \right)\, \mu^*(dw, \,dx, \,d\upsilon)\right)^2 \right] \\
&= \lim_{m\rightarrow\infty}E \left[\left(\int_{\R_+^I \times \sD^I \times \sM^I} {\phi}^{\delta}\left( w, x, \upsilon  \right)\bigg( f(x(t)) - f(x(s)) \right. \right. \\
 &\qquad\qquad\qquad\qquad\qquad\qquad\qquad\qquad\qquad \left.  \left. - \int_s^{t} \sL f(x(z))\,dz \bigg)\, \mu^{m}(dw, \,dx, \,d\upsilon)\right)^2 \right]\\
&= \lim_{m\rightarrow\infty}E \left[\left(\frac{1}{T_{m}}\int_{0}^{T_{m}}G^{u}_{\delta}( \hat{W}^{m}, \hat{X}^{m}, \hat \upsilon^{m})F^{u}_{s,t}(\hat{X}^{m})du\, \right)^2 \right], 
\end{aligned}
\end{equation}
where we recall the convention that the superscripts $r_m$ are simply written as $m$ and the process $\hat \upsilon^m = \hat\upsilon^{r_m}$ from Definition \ref{def:occMeas}. 
Note also that 
\begin{align*}
&E \left[\left(\frac{1}{T_{m}}\int_{0}^{T_{m}}G^{u}_{\delta}( \hat{W}^{m}, \hat{X}^{m}, \hat\upsilon^{m} )F^{u}_{s,t}(\hat{X}^{m})du\, \right)^2 \right]\\
&=\frac{1}{T^{2}_{m}}\int_{0}^{T_{m}} \int_{0}^{T_{m}}E\left[G^{u}_{\delta}( \hat{W}^{m}, \hat{X}^{m}, \hat\upsilon^{m})F^{u}_{s,t}
(\hat{X}^{m})G^{v}_{\delta}( \hat{W}^{m}, \hat{X}^{m}, \hat\upsilon^{m})F^{v}_{s,t}(\hat{X}^{m})\right]dvdu\\
&=\frac{2}{T^{2}_{m}}\int_{0}^{T_{m}} \int_{0}^{u}E\left[G^{u}_{\delta}( \hat{W}^{m}, \hat{X}^{m}, \hat\upsilon^{m})F^{u}_{s,t}(\hat{X}^{m})G^{v}_{\delta}( \hat{W}^{m}, \hat{X}^{m}, \hat \upsilon^{m})F^{v}_{s,t}(\hat{X}^{m})\right]dvdu. 
\end{align*}
Recall the definitions $\hat A^{r,t}$ and $\hat S^{r,t}$ from Section \ref{sec:detRenewProc} and recall that we denote
$\hat A^{r_m,t}$ and $\hat S^{r_m,t}$ as $\hat A^{m,t}$ and $\hat S^{m,t}$, respectively. For all $z,v\geq 0$ and $m\in\mathbb{N}$, define
\begin{equation*}
\tilde{X}^{m,v}(z)\doteq KM^{m}\left( \hat{A}^{m,v}(z)-\hat{S}^{m,v}(\rho z)\right) +r_{m}zK(\rho^{m}-\rho),
\end{equation*}
and 
\begin{equation*}
\breve{X}^{m,v}(z)\doteq\begin{cases}
\hat{X}^{m}(z)&\text{ if } z\leq v,\\
\hat{X}^{m}(v)+\tilde{X}^{m,v}(z-v) & \text{ if } z > v. 
\end{cases}
\end{equation*}
By Lemma \ref{CLTLLN} (i) and Lemma \ref{lem:sameDist}, for any $\epsilon>0$ there exists an $M \in [0,\infty)$ and $\kappa>0$ such that
\begin{equation*}
\sup_{u\geq 0, m\geq M}P\left( \sup_{v,z\in [0,Lt],|v-z|\leq \kappa}\Big|\hat{S}^{m,u+s}(v)- \hat{S}^{m,u+s}(z)\Big|>\epsilon\right)<\epsilon, 
\end{equation*}
and
\begin{equation*}
\sup_{u\geq 0,m\geq M}P\left( \sup_{v,z\in [0,t],|v-z|\leq \kappa}\Big|\hat{A}^{m,u+s}(v)- \hat{A}^{m,u+s}(z)\Big|>\epsilon\right)<\epsilon.
\end{equation*}
Consequently, recalling \eqref{eq:XhatDiff} and using Propositions \ref{prop:ArrServBnds} and \ref{prop:FuildScaledAllocConv}, it follows that for any $\epsilon>0$ we have
\begin{align*}
&\lim_{m\rightarrow\infty}\frac{1}{T_m}\int_0^{T_m}P\left(\sup_{z\in[0,u+t]}\Big|\hat{X}^{m}(z)-\breve{X}^{m,u+s}(z) \Big|>\epsilon \right)du\\
&=\lim_{m\rightarrow\infty}\frac{1}{T_m}\int_0^{T_m}P\left(\sup_{z\in[0,t-s]}\Big|\hat{X}^{m}(u+s+z)-\hat{X}^{m}(u+s)-\tilde{X}^{m,u+s}(z) \Big|>\epsilon \right)du=0, 
\end{align*}
which, combined with the fact that $f \in \sC_0^2(\R^I)$, implies that for any $\epsilon>0$ we have
\begin{equation}
\label{eq:FXhatFXbreve}
\lim_{m\rightarrow\infty}\frac{1}{T_m}\int_0^{T_m}P\left(\Big| F^{u}_{s,t}(\hat{X}^{{m}})-F^{u}_{s,t}(\breve{X}^{{m},u+s}) \Big|>\epsilon \right)du=0.
\end{equation}
Recall from Lemma \ref{lem:sameDist} that for all $m\in \mathbb{N}$ and $v\geq 0$, we have $\tilde{X}^{m,v}\overset{d}{=}\tilde{X}^{m,0}$ and from Lemma \ref{CLTLLN} (i) that $\tilde{X}^{m,0}\Rightarrow \hat{X}$ in $\sD^I$, where $\hat{X}$ is an $I$-dimensional Brownian motion with drift $\theta$ and covariance matrix $\Sigma$. Since $f \in \sC_0^2(\R^I)$, for all $y\in\mathbb{R}^I$ we have
\begin{equation*}
E^*\left[  f(\hat{X}(t-s)+y) - f(y) - \int_{s}^{t} \sL f(\hat{X}(z-s)+y)\,dz\right]=0. 
\end{equation*}
Thus, the compact support of $f$ implies
\begin{equation}
\label{eq:limExpF}
\lim_{m\rightarrow\infty}\sup_{y\in\mathbb{R}^I,v\geq 0}\left |E\left[  f(\tilde{X}^{m,v}(t-s)+y) - f(y) - \int_{s}^{t} \sL f(\tilde{X}^{m,v}(z-s)+y)\,dz\right] \right| =0.
\end{equation}
Now, writing
\begin{align*}
F^{u}_{s,t}(\breve{X}^{m,u+s})=& f(\tilde{X}^{m,u+s}(t-s)+\hat{X}^{m}(u+s)-\hat{X}^{m}(u)) - f(\hat{X}^{m}(u+s)-\hat{X}^{m}(u)) \\
&- \int_{s}^{t} \sL f(\tilde{X}^{m,u+s}(z-s)+\hat{X}^{m}(u+s)-\hat{X}^m(u))\,dz, 
\end{align*}
and since 
$\tilde{X}^{m,u+s}(\cdot)$ is independent of $\sG^{r_m}(u+s)$ and $\hat{X}^{m}(u+s)-\hat{X}^m(u)$ is $\sG^{r_m}(u+s)$-measurable, we have
\begin{equation}\label{eq:condExpFBnd}
\begin{aligned}
&\sup_{u\geq 0}E\left[F^{u}_{s,t}(\breve{X}^{m,u+s})\Big| \sG^{m}(u+s) \right]\\
&\leq \sup_{y\in\mathbb{R}^I,u\geq 0}E\left[  f(\tilde{X}^{m,u+s}(t-s)+y) - f(y) - \int_{s}^{t} \sL f(\tilde{X}^{m,u+s}(z-s)+y)\,dz\right].
\end{aligned}
\end{equation}
Finally, 
since $G^{u}_{r,\delta}$ and $F^{u}_{s,t}$ are uniformly bounded in $u$, we have
\begin{align*}
&E^* \left[\left(\int_{\R_+^I \times \sD^I \times \sM^I} {\phi}^{\delta}\left( w, x, \upsilon  \right)\left( f(x(t)) - f(x(s)) - \int_s^{t} \sL f(x(z))\,dz \right)\, \mu^*(dw, \,dx, \,d\upsilon)\right)^2 \right] \\
&=\lim_{m\rightarrow\infty}\frac{2}{T^{2}_{m}}\int_{0}^{T_{m}} \int_{0}^{u}E\left[G^{u}_{\delta}( \hat{W}^{{m}}, \hat{X}^{{m}}, \hat\upsilon^{{m}})F^{u}_{s,t}(\hat{X}^{{m}})G^{v}_{\delta}( \hat{W}^{{m}}, \hat{X}^{{m}}, \hat\upsilon^{{m}} )F^{v}_{s,t}(\hat{X}^{{m}})\right]dvdu\\
&=\lim_{m\rightarrow\infty}\frac{2}{T^{2}_{m}}\int_{0}^{T_{m}} \int_{0}^{u-t+s}E\left[G^{u}_{\delta}( \hat{W}^{{m}}, \hat{X}^{{m}}, \hat\upsilon^{{m}})F^{u}_{s,t}(\hat{X}^{{m}})G^{v}_{\delta}( \hat{W}^{{m}}, \hat{X}^{{m}}, \hat\upsilon^{{m}})F^{v}_{s,t}(\hat{X}^{{m}})\right]dvdu\\
&=\lim_{m\rightarrow\infty}\frac{2}{T^{2}_{m}}\int_{0}^{T_{m}} \int_{0}^{u-t+s}E\left[G^{u}_{\delta}( \hat{W}^{{m}}, \hat{X}^{{m}}, \hat\upsilon^{{m}})F^{u}_{s,t}(\breve{X}^{{m},u+s}) \right. \\
&\hspace{2in} \times \left.  G^{v}_{\delta}( \hat{W}^{{m}}, \hat{X}^{{m}}, \hat\upsilon^{{m}} )F^{v}_{s,t}(\hat{X}^{{m}})\right]dvdu\\
&=\lim_{m\rightarrow\infty}\frac{2}{T^{2}_{m}}\int_{0}^{T_{m}} \int_{0}^{u-t+s}E\left[E\left[F^{u}_{s,t}(\breve{X}^{{m},u+s})\Big|\sG^{r_m}(u+s) \right]G^{u}_{\delta}( \hat{W}^{{m}}, \hat{X}^{{m}}, \hat\upsilon^{{m}})\right. \\
&\hspace{2in} \times \left. G^{v}_{\delta}( \hat{W}^{{m}}, \hat{X}^{{m}}, \hat\upsilon^{{m}})F^{v}_{s,t}(\hat{X}^{{m}}) \right]dvdu \\
&= 0. 
\end{align*}
Above, the third equality uses \eqref{eq:FXhatFXbreve}, the fourth equality uses the fact that $G^{u}_{\delta}( \hat{W}^{{m}}, \hat{X}^{{m}}, \hat\upsilon^{{m}})$,  $G^{v}_{\delta}( \hat{W}^{{m}}, \hat{X}^{{m}}, \hat\upsilon^{{m}})$, and $F^{v}_{s,t}(\hat{X}^{{m}})$ are $\sG^{r_m}(u+s)$-measurable for $v\leq u-t+s$, and the last equality uses \eqref{eq:condExpFBnd} combined with \eqref{eq:limExpF}.  This shows \eqref{eq:longTermCostBMProofApprox} and completes the proof of (i).

We now prove (ii). Since $\mathbf{w}(0) = w + \upsilon(\{0\}) \ge 0$, it suffices, from the continuity and right continuity properties of $x$ and $\upsilon[0,\cdot]$, to show that,
 for all $t > 0$, 
\begin{equation}
\label{eq:posOrthTimeAve}
E^* \left[ \mu^*\left( w + x(t) + \upsilon([0,t]) \in \mathbb{R}_+^I \right) \right] = 1.
\end{equation}
Now fix $t\geq 0$.  For $\epsilon\geq 0$ define
\begin{equation*}
\mathcal{N}_\epsilon \doteq \left\{y\in\mathbb{R}^I: \inf_{ x \in\mathbb{R}^{I}_{+}}|y-x|>\epsilon\right\}, 
\end{equation*}
i.e., $\mathcal{N}_\epsilon$ is the set of values $y\in\mathbb{R}^I$ that are more than $\epsilon$ away from the positive orthant $\mathbb{R}^I_+$.  Let $\delta>0$ be arbitrary and note that due to 
Lemma \ref{lem:contApproxSkorok}, the mapping 
\begin{equation*}
(w,x,\upsilon) \mapsto w+\psi_\delta(x)(t)+\psi_\delta(\upsilon)(t), \qquad (w,x,\upsilon)\in \R_+^I \times \sD^I \times \sM^I, 
\end{equation*}
is continuous. Consequently, because $\mathcal{N}_\epsilon$ is an open set, the mapping 
\begin{equation*}
(w,x,\upsilon)\mapsto \mathcal{I}_{\mathcal{N}_\epsilon}(w+\psi_\delta(x)(t)+\psi_\delta(\upsilon)(t)), \qquad (w,x,\upsilon)\in \R_+^I \times \sD^I \times \sM^I, 
\end{equation*}
 is lower semicontinuous.  This then implies (cf. \cite[Theorem A.3.12]{Dupuis97}) that the mapping 
\begin{align*}
\mu\mapsto & \int_{\R_+^I \times \sD^I \times \sM^I} \mathcal{I}_{\mathcal{N}_\epsilon}(w+\psi_\delta (x)(t)+\psi_\delta(\upsilon)(t)) \mu(dw, \,dx, \,d\upsilon) \\
&= \mu \left( w + \psi_\delta(x)(t) + \psi_\delta(\upsilon)(t) \in \mathcal{N}_\epsilon \right), \qquad \mu\in\mathcal{P}(\R_+^I \times \sD^I \times \sM^I), 
\end{align*}
is also lower semicontinuous.  
Recalling, for $u \ge 0$, $\hat W^r(u) = \hat w^r + \hat X^r(u) + \hat U^r(u) \in \R_+^I$ a.s., note that 
\begin{equation}\label{eq:psiWXUcalc}
\begin{aligned}
	&\hat W^r(u) + \psi_\delta(\hat X^r(u + \cdot) - \hat X^r(u))(t) + \psi_\delta(\hat \upsilon_u^r)(t) \\
	&= \hat W^r(u) + \frac{1}{\delta}\int_t^{t+\delta}(\hat X^r(u+z) - \hat X^r(u))dz + \frac{1}{\delta}\int_t^{t+\delta}\hat \upsilon^r_u([0,z])dz \\
	&= \hat W^r(u) - \hat X^r(u) - \hat U^r(u) + \frac{1}{\delta} \int_t^{t+\delta} (\hat X^r(u+z) + \hat U^r(u+z))dz \\
	&= \frac{1}{\delta}\int_t^{t+\delta}\hat W^r(u+z)dz = \psi_\delta(\hat{W}^{r}(u+\cdot))(t) \in \R_+^I \;\mbox{a.s.}
\end{aligned}
\end{equation}
Then, from weak convergence of $\mu^m\to \mu^*$
\begin{align*}
E^*\left[ \mu^*\left(w+\psi_\delta (x)(t)+\psi_\delta(\upsilon)(t) \in \mathcal{N}_\epsilon\right)\right] 
&\leq\lim_{m\rightarrow\infty}E\left[ \mu^{m}\left(w+\psi_\delta (x)(t)+\psi_\delta(\upsilon)(t) \in \mathcal{N}_\epsilon \right) \right]\\
&=\lim_{m\rightarrow\infty} \frac{1}{T_m}\int_0^{T_m} P\left(\psi_\delta(\hat{W}^{m}(u+\cdot))(t) \in \mathcal{N}_\epsilon\right) du = 0.
\end{align*}
Since $\mathcal{N}_{0}=\bigcup_{n\in\mathbb{N}}\mathcal{N}_{1/n}$, it follows that
\begin{equation}\label{eq:arbdelta}
E^*\left[ \mu^*\left(w+\psi_\delta (x)(t)+\psi_\delta(\upsilon)(t) \in \mathcal{N}_0\right) \right]\leq \sum_{n\in\mathbb{N}}E^*\left[\mu^*\left(w+\psi_\delta (x)(t)+\psi_\delta(\upsilon)(t) \in \mathcal{N}_{1/n}\right)\right]=0.
\end{equation}
Next, since
\[
	w + x(t) + \upsilon([0,t]) = \lim_{\delta\to0} \left( w + \psi_{\delta}(x)(t) + \psi_{\delta}(\upsilon)(t) \right),
\]
the statement in \eqref{eq:posOrthTimeAve}  follows on sending $\delta\to 0$ in \eqref{eq:arbdelta}. This
completes the proof of (ii). 


Finally, we turn to (iii). 
For $n\in\mathbb{N}$, $\{s_{i}\}_{i=1}^n\subset\mathbb{R}_+^n$, $t\ge0$, $(y,z)\in\sD^{2I}$, and $f\in\mathcal{C}_b(\mathbb{R}^{n2I})$, 
define the quantities
\begin{equation*}
\label{eq:finDimDistContFcn}
\Psi^f_{\{s_i\}}(y,z) \doteq f\left(\pi_{\{s_i\}}(y(\cdot),z(\cdot)-z(0))\right), \qquad
\Psi^{f,t}_{\{s_i\}}(y,z) \doteq f\left(\pi_{\{s_i\}}(y(t + \cdot),z(t+\cdot)-z(t))\right),
\end{equation*}
and
\begin{equation}
\label{eq:PhiDefPsiMinusPsiT}
\Phi^{f,t}_{\{s_i\}}(y,z)\doteq \Psi^f_{\{s_i\}}(y,z)-\Psi^{f,t}_{\{s_i\}}(y,z), 
\end{equation}
where $\pi_{\{s_i\}} \doteq (\pi_{s_1},...,\pi_{s_n})$ and $\pi_t$ is the projection mapping given in Definition \ref{def:piProj}.  
Recall that (cf. \cite[Proposition 3.7.1]{ethier2009markov}), there is a countable collection of measure-determining maps on $\sD^{2I}$ of the form 
\begin{equation*}
(y,z) \mapsto f\left(\pi_{\{s_i\}}(y,z)\right), \qquad (y,z) \in \sD^{2I}. 
\end{equation*}
Together with the right continuity of $t\mapsto ({\bf w}(t + \cdot),{\bf u}(t+\cdot)-{\bf u}(t))$, this says that, to show (iii) it suffices to 
 show that for any $t\ge0$, $n\in\mathbb{N}$, $\{s_{i}\}_{i=1}^n \subset \mathbb{R}^{n}$, and $f\in\mathcal{C}_b(\mathbb{R}^{n2I})$ we have
\begin{equation}
E^*\left[\bigg|\int_{\R_+^I \times \sD^I \times \sM^I}\Phi^{f,t}_{\{s_i\}}(w + x+\upsilon([0,\cdot]),\upsilon([0,\cdot]))\mu^*(dw,dx,d\upsilon) \bigg| \right]=0. \label{eq:finDimDistContFcnTimeDiff}
\end{equation}

We now prove \eqref{eq:finDimDistContFcnTimeDiff}. Let $\delta>0$.  Due to Lemma \ref{lem:contApproxDistFun}, 
 the map
\begin{equation*}
(w,x,\upsilon) \mapsto \Phi^{f,t}_{\{s_i\}}(w+ \psi_\delta(x)+ \psi_\delta(\upsilon),\psi_\delta(\upsilon)), \qquad (w,x,\upsilon)\in\mathbb{R}_+^I\times \sD^I\times \sM^I.
\end{equation*}
is bounded and continuous. 
Then, 
\begin{equation} \label{eq:psilim0}
\begin{aligned}
&E^*\left[\bigg|\int_{\R_+^I \times \sD^I \times \sM^I}\Phi^{f,t}_{\{s_i\}}(w+ \psi_\delta(x)+ \psi_\delta(\upsilon),\psi_\delta(\upsilon))\mu^*(dw,dx,d\upsilon) \bigg| \right]\\
&=\lim_{m\rightarrow\infty}E\left[\bigg|\int_{\R_+^I \times \sD^I \times \sM^I}\Phi^{f,t}_{\{s_i\}}(w+ \psi_\delta(x)+ \psi_\delta(\upsilon),\psi_\delta(\upsilon))\mu^{m}(dw,dx,d\upsilon) \bigg| \right] \\
&=\lim_{m\rightarrow\infty}E\left[\bigg|\frac{1}{T_m}\int_0^{T_m}\Phi^{f,t}_{\{s_i\}}(\psi_\delta(\hat{W}^{m}(u+\cdot)),\psi_\delta(\hat{U}^{m}(u+\cdot)-\hat{U}^{m}(u)))du \bigg| \right], 
\end{aligned}
\end{equation}
where the second equality uses the calculation in \eqref{eq:psiWXUcalc}. 
Note that
\begin{align*}
&\Psi^f_{\{s_i\}}(\psi_\delta(\hat{W}^{m}(u+t+\cdot)),\psi_\delta(\hat{U}^{m}(u+t+\cdot)-\hat{U}^{m}(u+t)))\\
&=\Psi^{f,t}_{\{s_i\}}(\psi_\delta(\hat{W}^{m}(u+\cdot)),\psi_\delta(\hat{U}^{m}(u+\cdot)-\hat{U}^{m}(u)))
\end{align*}
for all $u,t\geq0$, and therefore
\begin{align*}
&\int_0^{T_m} \Psi^f_{\{s_i\}}(\psi_\delta(\hat{W}^{r_m}(u+\cdot)),\psi_\delta(\hat{U}^{r_m}(u+\cdot)-\hat{U}^{r_m}(u)) ) du\\
&= \int_0^t \Psi^f_{\{s_i\}}(\psi_\delta(\hat{W}^{r_m}(u+\cdot)),\psi_\delta(\hat{U}^{r_m}(u+\cdot)-\hat{U}^{r_m}(u)) ) du \\
 &\quad + \int_0^{T_m-t} \Psi^{f,t}_{\{s_i\}}(\psi_\delta(\hat{W}^{r_m}(u+\cdot)),\psi_\delta(\hat{U}^{r_m}(u+\cdot)-\hat{U}^{r_m}(u)) ) du.
 \end{align*}
 It then follows that
 \begin{align*}
 &\int_0^{T_m}\Phi^{f,t}_{\{s_i\}}(\psi_\delta(\hat{W}^{r_m}(u+\cdot)),\psi_\delta(\hat{U}^{r_m}(u+\cdot)-\hat{U}^{r_m}(u)))du  \\
 &= \int_0^t \Psi^f_{\{s_i\}}(\psi_\delta(\hat{W}^{r_m}(u+\cdot)),\psi_\delta(\hat{U}^{r_m}(u+\cdot)-\hat{U}^{r_m}(u)) ) du \\
 &\quad - \int_{T_m-t}^{T_m} \Psi^{f,t}_{\{s_i\}}(\psi_\delta(\hat{W}^{r_m}(u+\cdot)),\psi_\delta(\hat{U}^{r_m}(u+\cdot)-\hat{U}^{r_m}(u)) ) du. 
\end{align*}
Recalling the boundedness of $f$ we now see that the right side (and therefore also the left side) of \eqref{eq:psilim0} is $0$. The statement in \eqref{eq:finDimDistContFcnTimeDiff}
now follows on sending $\delta \to 0$.
This completes the proof of (iii).
\end{proof}


We now prove our main result. \\


{\bf Proof of Theorem \ref{thm:main}.} 
Note that the inequality $J_{E}^*\geq \tilde{J}_{E}^{BCP,*}$ holds trivially when $J_E^*=\infty$. So suppose now that $J_E^*<\infty$. 
Under this assumption, by  Theorems \ref{thm:tightness} and \ref{thm:conv}, there exists a subsequence $\{r_m,B^{m},T_{r_m}\}_{m=1}^{\infty}$ of the sequence defined in Definition \ref{def:optSeq} such that the corresponding random measures $\{\mu^{m}\}_{m=1}^{\infty}$ given by Definition \ref{def:occMeas} converge weakly to a random variable $\mu^*\in\mathcal{P}(\mathbb{R}_+^I\times\sD^I\times\sM^I)$,  defined on some space $(\Omega^*, \sF^*, P^*)$,  which satisfies the conclusions of Theorem \ref{thm:conv}.

For arbitrary $N\in[0,\infty)$, define the following bounded approximation of the effective cost function:
\begin{equation}
\label{eq:hHatN}
\hat{h}_{N}(w)\doteq\hat{h}(w)\wedge N, \qquad w\in\mathbb{R}_+^I. 
\end{equation}
Note that $\hat{h}_N$ is a bounded, continuous function on $\mathbb{R}_+^I$ and that $\hat{h}_N(w)\uparrow\hat{h}(w)$ as $N\rightarrow\infty$ for all $w\in\mathbb{R}_+^I$.  Let $z>0$ be arbitrary.  
 Lemma \ref{lem:intBndContFnc} applied to the function $f(y,s) = z^{-1}\hat h_N(y)\mathcal{I}_{\{s \le z\}}$ says that the map
 \[
 	(w, x, \upsilon) \mapsto \frac{1}{z}\int_0^z \hat h_N(w + x(s) + \upsilon([0,s]) ds, \qquad (w, x, \upsilon) \in \mathbb{R}_+^I\times\sD^I\times\sM^I, 
 \]
 is bounded and continuous, and so 
\begin{equation*}
\mu\mapsto\int_{\R_+^I \times \sD^I \times \sM^I}\frac{1}{z}\int_{0}^{z}\hat{h}_{N}(w+x(s)+\upsilon([0,s]))ds\mu(dw,dx,d\upsilon), \qquad \mu\in\mathcal{P}(\mathbb{R}_+^I\times\sD^I\times\sM^I), 
\end{equation*}
is a bounded and continuous function. Combined with the fact that $\mu^{m}\Rightarrow\mu^*$ in $\mathcal{P}(\mathbb{R}_+^I\times\sD^I\times\sM^I)$, this implies that 
\begin{equation}\label{eq:BndErgCostMeasConv}
\begin{aligned}
&\lim_{m\rightarrow\infty}E\left[\int_{\R_+^I \times \sD^I \times \sM^I}\frac{1}{z}\int_{0}^{z}\hat{h}_{N}(w+x(s)+\upsilon([0,s]))ds\mu^{{m}}(dw,dx,d\upsilon)\right]\\
&=E^*\left[\int_{\R_+^I \times \sD^I \times \sM^I}\frac{1}{z}\int_{0}^{z}\hat{h}_{N}(w+x(s)+\upsilon([0,s]))ds\mu^{*}(dw,dx,d\upsilon)\right].
\end{aligned}
\end{equation}
In addition, for all $m\in\mathbb{N}$ we have 
\begin{align*}
\label{eq:BndErgCostMeasUpprBnd}
&E\left[\int_{\R_+^I \times \sD^I \times \sM^I}\frac{1}{z}\int_{0}^{z}\hat{h}_{N}(w+x(s)+\upsilon([0,s]))ds\mu^{m}(dw,dx,d\mu)\right]\\
&=E\left[\frac{1}{zT_{m}}\int_{0}^{T_{m}}\int_{0}^{z}\hat{h}_{N}(\hat{W}^{{m}}(s+t))dsdt\right] 
=E\left[\frac{1}{zT_{m}}\int_{0}^{T_{m}+z}\int_{(u-z)\vee 0}^{u\wedge T_m}\hat{h}_{N}(\hat{W}^{r_{m}}(u))dtdu\right] \\
&\leq E\left[\frac{1}{T_{m}}\int_{0}^{T_{m}+z}\hat{h}_{N}(\hat{W}^{r_{m}}(u))du\right]\leq E\left[\frac{1}{T_{m}}\int_{0}^{T_{m}+z}h\cdot \hat{Q}^{r_{m}}(u)du\right]\leq \frac{T_m+z}{T_{m}}\left( J_E^{*}+\frac{1}{r_m}\right), 
\end{align*}
where the second equality uses a change of variables with $u=s+t$, the first inequality uses the fact that $\hat{h}_N(\cdot)$ is nonnegative, the second inequality uses the definitions of $\hat{h}$ and $\hat{h}_N$ in \eqref{eq:hhat}  and \eqref{eq:hHatN}, and the third inequality uses the properties of $(r_m,B^{r_m},T_m)$ given in Definition \ref{def:optSeq}.  Combining  this bound with \eqref{eq:BndErgCostMeasConv} 
implies
\begin{equation*}
E^*\left[\int_{\R_+^I \times \sD^I \times \sM^I}\frac{1}{z}\int_{0}^{z}\hat{h}_{N}(w+x(s)+\upsilon([0,s]))ds\mu^{*}(dw,dx,d\upsilon)\right]\leq J_E^*
\end{equation*}
Since this holds for all $z>0$, and right continuity implies
\begin{equation*}
\lim_{z\rightarrow 0}\frac{1}{z}\int_{0}^{z}\hat{h}_N(w+x(s)+\upsilon([0,s]))ds = \hat{h}_N(w+x(0)+\upsilon(\{0\})), 
\end{equation*}
for all $(w,x,\upsilon)\in \mathbb{R}_+^I\times \sD^I\times\sM^I$, the dominated convergence theorem (combined with the fact that $\mu^*(\{x(0)=0\})=1$ $P^*$-a.s.) gives
\begin{equation}\label{eq:BndErgCostMeasBndUnifN}
\begin{aligned}
&E^*\left[\int_{\R_+^I \times \sD^I \times \sM^I}\hat{h}_N(w+\upsilon(\{0\}))\mu^{*}(dw,dx,d\upsilon)\right]\\
&=\lim_{z\rightarrow 0}E\left[\int_{\R_+^I \times \sD^I \times \sM^I}\frac{1}{z}\int_{0}^{z}\hat{h}(w+x(s)+\upsilon([0,s]))ds\mu^{*}(dw,dx,d\upsilon)\right]\leq J_E^*. 
\end{aligned}
\end{equation}
Due to the fact that $\hat{h}_N(w)\uparrow\hat{h}(w)$ as $N\rightarrow\infty$ for all $w\in\mathbb{R}_+^I$, the monotone convergence theorem and \eqref{eq:BndErgCostMeasBndUnifN} then give
\begin{align}
&E^*\left[\int_{\R_+^I \times \sD^I \times \sM^I}\hat{h}(w+\upsilon(\{0\}))\mu^{*}(dw,dx,d\upsilon)\right] \nonumber\\
&=\lim_{N\rightarrow\infty}E^*\left[\int_{\R_+^I \times \sD^I \times \sM^I}\hat{h}_{N}(w+\upsilon(\{0\}))\mu^{*}(dw,dx,d\upsilon)\right] \le J_E^*. \label{eq:147}
\end{align}
Due to Theorem \ref{thm:conv}, $P^*$-a.s., $\mathbf{u}(\cdot) = \upsilon((0,\cdot])$ is an admissible control as in Definition \ref{def:BCP2} with $(\R_+^I \times \sD^I \times \sM^I,\mathcal{B}(\R_+^I \times \sD^I \times \sM^I), \mu^*, \{\sF^*(t)\},x, \mathbf{w})$ in place of  $(\tilde{\Omega}, \tilde{\sF}, \tilde{P}, \{\tilde{\sF}(t)\},\tilde{X}, \tilde{W})$, where $\mathbf{w}(\cdot) = w + x(\cdot) + \upsilon([0, \cdot])$. 
So, from \eqref{eq:147}, $P^*$-a.s. we have 
\begin{align*}
\tilde{J}_E^{BCP,*} &\leq   \int_{\R_+^I \times \sD^I \times \sM^I}\hat{h}(\mathbf{w}(0))\mu^*(dw, dx, d\upsilon)   \\
& = \int_{\R_+^I \times \sD^I \times \sM^I}\hat{h}(w+\upsilon(\{0\}))\mu^*(dw, dx, d\upsilon) \le J_E^*, 
\end{align*}
completing the proof.  \hfill \qed


\section{Proofs of Auxiliary Results}\label{sec:MovedProofs}

We finish with the proofs of several results used above. Section \ref{sec:proofNextArrTimeBnd} contains the proof of Proposition  \ref{prop:nextArrTimeBnd},  and the proofs of Propositions   \ref{prop:ArrServBnds} and  \ref{prop:FuildScaledAllocConv} are given in Sections \ref{sec:ProofPropArrServBnds} and \ref{sec:Proofof PropFluidScaledAllocConv}, respectively. 

\subsection{Proof of Proposition \ref{prop:nextArrTimeBnd}}
\label{sec:proofNextArrTimeBnd}
Recall the convention that $u^r_j(0)=0$ for all $r\in\mathbb{N}$ and $j\in\AAA_J$.  Let $j \in \AAA_J$ and $t\in [0,\infty)$ be arbitrary.  
For $n \in \N$, we have
\begin{align*}
\left\{ \tau^{r,A}_j(t)=n\right\}&=\left\{ \left(tr^{2}-u^r_j(n)\right)^{+}\leq \sum_{l=1}^{n-1}u^{r}_{j}(l) < tr^{2} \right\}\\
&=\left\{\tau^{r,A}_{j}\left(\left(t-\frac{u^r_j(n)}{r^{2}}\right)^{+}\right)\leq n-1\right\}\cap \left\{\tau^{r,A}_{j}(t)> n-1\right\}, 
\end{align*}
and note that, for any fixed $x\in [0,\infty)$, the event
\begin{eqnarray*}
\left\{\tau^{r,A}_{j}\left(\left(t-\frac{x}{r^{2}}\right)^{+}\right)\leq n-1<\tau^{r,A}_{j}(t)\right\} 
\end{eqnarray*}
is independent of $u_j^r(n)$. Moreover, by standard results about random walks, $\tau_j^{r,A}(t) < \infty$ a.s.
%
Then, recalling that the $\{u^{r}_{j}(l)\}_{l=1}^{\infty}$ are i.i.d.,  and denoting by $\gamma^r_j$ the distribution of $u_j^r(1)$,
\begin{align}
E\left[u^{r}_{j}(\tau^{r,A}_{j}(t))\right]&=\sum_{n=1}^{\infty}E\left[u^{r}_{j}(n)\mathcal{I}_{\{ \tau^{r,A}_{j}(t)=n \}}\right] \nonumber\\
&=\sum_{n=1}^{\infty}E\left[u^{r}_{j}(n)\mathcal{I}_{\left\{\tau^{r,A}_{j}\left(\left(t-r^{-2}u^r_j(n)\right)^{+}\right)\leq n-1<\tau^{r,A}_{j}(t)\right\}}\right] \nonumber\\
&=\sum_{n=1}^{\infty}\int_0^\infty xE\left[\mathcal{I}_{\left\{\tau^{r,A}_{j}\left(\left(t-r^{-2}x\right)^{+}\right)\leq n-1<\tau^{r,A}_{j}(t)\right\}}\right] \gamma^r_j(dx) \nonumber\\
&=\int_0^\infty xE\left[\sum_{n=1}^{\infty}\mathcal{I}_{\left\{\tau^{r,A}_{j}\left(\left(t-r^{-2}x\right)^{+}\right)\leq n-1<\tau^{r,A}_{j}(t)\right\}}\right] \gamma^r_j(dx) \nonumber\\
&=\int_0^\infty xE\left[\tau^{r,A}_{j}(t)-\tau^{r,A}_{j}\left(\left(t-\frac{x}{r^2}\right)^{+}\right)\right]\gamma^r_j(dx).  \label{eq:uTauBnd1}
\end{align}
Note that 
\[
\tau^{r,A}_{j}(t)-\tau^{r,A}_{j}\left(\left(t-\frac{x}{r^2}\right)^{+}\right) \le  
\min\left\{n\geq 1:\sum_{l=\tau^{r,A}_{j}\left(\left(t-r^{-2}x\right)^{+}\right)+1}^{\tau^{r,A}_{j}\left(\left(t-r^{-2}x\right)^{+}\right)+n}u^{r}_{j}(l)\geq xr^{-2}\right\}
\overset{d}{=} \tau^{r,A}_{j}\left(\frac{x}{r^2}\right).
\]
Consequently, from \eqref{eq:uTauBnd1} we have
\begin{equation}
\label{eq:uTauBnd2}
\sup_{t>0} E\left[u^{r}_{j}(\tau^{r,A}_{j}(t))\right]\leq \int_0^\infty xE\left[ \tau^{r,A}_{j}\left(\frac{x}{r^2}\right)\right]\gamma^r_j(dx).
\end{equation}
Now, due to Condition \eqref{cond:uniformintegrability}, there exists $K \in [0,\infty)$ such that
\begin{equation}
\label{eqn:uniformTailBnd}
\sup_{r \ge 0} E\left[ u^{r}_{j}(1) \mathcal{I}_{\{u^{r}_{j}(1) > K\}} \right] \leq \frac{1}{4\alpha_{j}}, 
\end{equation}
and without loss of generality, we may suppose that $K > 1/(4\alpha_j)$. 
Let $R_0>0$ be such that, for all $r\ge R_0$, $\frac{3}{4}\alpha^{-1}_j \leq (\alpha^r_j)^{-1}$. Then, for $r\ge R_0$, 
\begin{equation*} 
E\left[ u^{r}_{j}(1) \mathcal{I}_{\{u^{r}_{j}(1)\leq K\}} \right] = \frac{1}{\alpha^r_{j}}-E\left[ u^{r}_{j}(1) \mathcal{I}_{\{u^{r}_{j}(1) > K\}} \right]\geq \frac{3}{4\alpha_j}-\frac{1}{4\alpha_j} = \frac{1}{2\alpha_j}, 
\end{equation*}
as well as
\begin{align*}
E\left[ u^{r}_{j}(1) \mathcal{I}_{\{u^{r}_{j}(1)\leq K\}} \right] 
& \leq \frac{1}{4\alpha_{j}}P\left(u^{r}_{j}(1) < \frac{1}{4\alpha_{j}}\right)+KP\left(\frac{1}{4\alpha_{i}} \leq u^{r}_{j}(1) \leq K\right) \\
&\leq \frac{1}{4\alpha_{j}}+KP\left(u^{r}_{j}(1) \ge \frac{1}{4\alpha_{j}} \right).
\end{align*}
Therefore, for $r\ge R_0$  we have
\begin{equation}
P\left(u^{r}_{j}(1) \ge \frac{1}{4\alpha_{j}} \right) \geq \frac{1}{K}E\left[ u^{r}_{j}(1) \mathcal{I}_{\{u^{r}_{j}(1)\leq K\}} \right]-\frac{1}{4K \alpha_{j}} \geq\frac{1}{2K \alpha_{j}}- \frac{1}{4K \alpha_{j}}\geq \frac{1}{4K \alpha_{j}}.\label{eq:530}
\end{equation}
Now, define  
\[
	C^{r}_{j}(n)=\sum_{l=1}^{n}\mathcal{I}_{\{ u^{r}_{j}(l)\geq (4 \alpha_{j})^{-1} \}} \qquad  \mbox{and} \qquad  \zeta^{r}_{j}(x)=\min\{n\geq0:C^{r}_{j}(n)= \lceil 4x \alpha_{j}\rceil \}. 
\]
For some $m \in \N$, suppose that $\zeta_j^r(x) \le m$. Then, 
\[
	\sum_{l=1}^m u_j^r(l) \ge \frac{C_j^r(m)}{4\alpha_j} \ge \frac{4x\alpha_j}{4\alpha_j}= x, 
\] 
or $\tau_j^{r,A}(r^{-2}x) \le m$. 
Therefore, $E\left[\tau^{r,A}_{j}\left(r^{-2}x\right) \right] \leq E\left[ \zeta^{r}_{j}(x)\right]$.  Because the $\{u^{r}_{j}(l)\}_{l=1}^{\infty}$ are i.i.d., from \eqref{eq:530} we see that $\zeta^{r}_{j}(x)$ is the sum of $\lceil 4x \alpha_{j}\rceil$ independent geometric distributions with probability of success $P(u_j^r(1) \ge (4\alpha_j)^{-1})\geq (4K \alpha_{j})^{-1}$.  Thus, for all $r\ge R_0$, we have 
\begin{equation*}
E\left[\tau^{r,A}_{j}\left(\frac{x}{r^2}\right) \right]\leq E\left[ \zeta^{r}_{j}(x) \right] \leq  (1+ 4x \alpha_{j}) 4K \alpha_{j} \leq 4K \alpha_{j} + 16xK \alpha^{2}_{j}.
\end{equation*}
Combining this with \eqref{eq:uTauBnd2} implies that for $r\ge R_0$  we have
\begin{align*}
\sup_{t>0}E\left[u^{r}_{j}(\tau^{r,A}_{j}(t))\right] &\leq \int_0^\infty x(4K \alpha_{j} + 16xK \alpha^{2}_{j}) \gamma^r_j(dx) \\
&= \frac{4K \alpha_{j}}{\alpha^{r}_{j}}+16\left((\sigma^{u,r}_{j})^2 + \frac{1}{(\alpha_j^r)^2}\right) K \alpha^{2}_{j}. 
\end{align*}
The result now follows from the uniform integrability assumed in Condition \ref{cond:uniformintegrability}.\hfill \qed

\subsection{Proof of Proposition \ref{prop:ArrServBnds}}
\label{sec:ProofPropArrServBnds}

For (i), note that since $\tau_j^{r,A}(t)$ is the first $n$ such that $\sum_{l=1}^n u_j^r(l) \ge r^2t$, 
\begin{align*}	
	\hat{\Upsilon}_j^{A,r}(t) &= r\left( \frac{1}{r^2} \sum_{l=1}^{\tau_j^{r,A}(t)} u_j^r(l) - t \right) < \frac{u_j^r(\tau_j^{r,A}(t))}{r}. 
\end{align*}
Therefore, Proposition \ref{prop:nextArrTimeBnd} gives 
\[
	\limsup_{k \to \infty} \sup_{t\ge0} P\left(\hat{\Upsilon}_j^{A,k}(t) > \eps \right) \le \limsup_{k \to \infty} \frac{1}{r_k\epsilon}\sup_{t\ge0}  E\left[ u_j^{r_k}(\tau_j^{{r_k},A}(t)) \right] = 0. 
\]


The proof of (ii) is more complicated because it involves the allocation policy $B^{k}(t)$. 
 Let $j\in\AAA_J$ and $\epsilon>0$ be arbitrary.  For $k\in\mathbb{N}$ define the event
\[
\mathcal{H}^{t,k}=\left\{ \bar{\Upsilon}^{A,k}_{j}(t)\leq\frac{1}{4},\sup_{0\leq s \leq 1}\left|\frac{1}{r_{k}} A_{j}^{k,t}(r_{k}^{2}s)-r_{k} s\alpha^{r_{k}}_{j}\right|\leq \frac{r_{k}\alpha^{r_{k}}_{j}}{4}\right\}.
\]
Due to Proposition \ref{prop:nextArrTimeBnd}, Lemma \ref{CLTLLN} (i) and Lemma \ref{lem:sameDist}, 
we have
\begin{equation}
\label{eq:HsetProbBnd}
\lim_{k\rightarrow\infty}\sup_{t\geq 0}P(\mathcal{H}^{t,k})^c=0.
\end{equation}
For $t\geq 0$, $k\geq K$, and $\epsilon > 0$ define the event
\[
\mathcal{C}_{\epsilon}^{t,k}=\left\{\sup_{ x,y \in \left[0,r_k^{-1}\epsilon+L\right],|x-y|\leq r_k^{-1}\epsilon}\left|\hat{S}^{k,t}_{j}(x)-\hat{S}^{k,t}_{j}(y)\right|\leq \epsilon\right\}, 
\]
where we recall $L = \max_i C_i$ from \eqref{eq:CLip}. 

Due to Lemma \ref{CLTLLN} (i) combined with Lemma \ref{lem:sameDist}, for fixed $\epsilon>0$ we have 
\begin{equation}
\label{eq:CsetProbBnd}
\lim_{k\rightarrow\infty}\sup_{t\geq 0}P(\mathcal{C}_{\epsilon}^{t,k})^c=0.
\end{equation}

Let $\beta_0 \doteq \inf_{j \in \mathbb{A}_J} \inf_{r>0} \beta_j^r$ and let $\epsilon_0 \doteq \beta_0\epsilon$.
Note that, for $t>1$, 
\begin{align}
P\left(r_{k}\bar{\Upsilon}^{S,k}_{j}(t)>\epsilon \right)&\leq P\left(\{\tau_{j}^{k,S}(t-1)<\tau_{j}^{k,S}(t)\}\cap\{r_{k}\bar{\Upsilon}^{S,k}_{j}(t)>\epsilon\} \right)+P\left(\tau_{j}^{k,S}(t-1)=\tau_{j}^{k,S}(t) \right).\\
\label{eq:847}
\end{align}
We will now argue that, for $t>1$,
\begin{equation}
\label{eq:UpsilonSGreater1}
P\left(\{r_{k}\bar{\Upsilon}^{S,k}_{j}(t)>\epsilon\}\cap\{\tau_{j}^{k,S}(t-1)<\tau_{j}^{k,S}(t)\}\right)\leq P( \mathcal{C}_{\epsilon_0}^{t-1,k})^c.
\end{equation}

For that note that, for $t>1$, on the event $\{\tau_{j}^{k,S}(t-1)<\tau_{j}^{k,S}(t)\}$, we have $\bar{\xi}_{j}^{S,k}(t-1) < \bar{B}_{j}^{k}(t)$ and 
\begin{equation*}
\bar B_j^{k}(t)= \bar B_j^{k}(t-1) + (\bar B_j^{k}(t)-\bar B_j^{k}(t-1))\leq \bar{\xi}_{j}^{S,k}(t-1) +L
\end{equation*}
which gives $\bar{\xi}_{j}^{S,k}(t-1) < \bar{B}_{j}^{k}(t)\leq \bar{\xi}_{j}^{S,k}(t-1) +L$.
In addition, for $s\in [0, \bar{\Upsilon}^{S,k}_{j}(t))$, $\bar B_j^{k}(t) + s < \bar \xi_j^{S,k}(t)$, and so on this event, 
for $s\in [0, \bar{\Upsilon}^{S,k}_{j}(t))$,
we have 
\begin{align*}
&S^{k,t-1}_{j}\left(r_k^2\left(s+\bar{B}^{k}_{j}(t)-\bar{\xi}^{S,k}_{j}(t-1)\right)\right)-S^{k,t-1}_{j}\left(r_k^2\left(\bar{B}^{k}_{j}(t)-\bar{\xi}^{S,k}_{j}(t-1)\right)\right)\\
&=S^{k}_{j}\left(r_k^2\left(s+\bar{B}^{k}_{j}(t)\right)\right)-S^{k}_{j}\left(r_k^2\left(\bar{B}^{k}_{j}(t)\right)\right)=0
\end{align*}
which gives
\begin{equation*}
\hat{S}^{k,t-1}_{j}\left(s+\bar{B}^{k}_{j}(t)-\bar{\xi}^{S,k}_{j}(t-1)\right)-\hat{S}^{k,t-1}_{j}\left(\bar{B}^{k}_{j}(t)-\bar{\xi}^{S,k}_{j}(t-1)\right)=-r_k\beta_{j}^{k}s.
\end{equation*}
So,  replacing the $s$ above with $\eps/r_k$, we see that, on the set $\{r_{k}\bar{\Upsilon}^{S,k}_{j}(t)>\epsilon\} \cap \{\tau_{j}^{k,S}(t-1)<\tau_{j}^{k,S}(t)\}$,
\begin{align*}
\epsilon_0\le \epsilon\beta^{k}_{j}&\leq \sup_{s\in \left[0,r_k^{-1}\epsilon\right]}\left|\hat{S}^{k,t-1}_{j}\left(s+\bar{B}^{k}_{j}(t)-\bar{\xi}^{S,k}_{j}(t-1)\right)-\hat{S}^{k,t-1}_{j}\left(\bar{B}^{k}_{j}(t)-\bar{\xi}^{S,k}_{j}(t-1)\right)\right| \\
&\leq\sup_{ x,y \in \left[0,r_k^{-1}\epsilon+L\right],|x-y|\leq r_k^{-1}\epsilon}\left|\hat{S}^{k,t-1}_{j}(x)-\hat{S}^{k,t-1}_{j}(y)\right|.
\end{align*}
This proves \eqref{eq:UpsilonSGreater1} and so from \eqref{eq:847} we obtain that
\begin{align}
P\left(r_{k}\bar{\Upsilon}^{S,k}_{j}(t)>\epsilon \right)&\leq
P(\mathcal{C}_{\epsilon_0}^{t-1,k})^c +P(\mathcal{H}^{t-1,k})^c+P\left(\{\tau_{j}^{k,S}(t-1)=\tau_{j}^{k,S}(t)\}\cap\mathcal{H}^{t-1,k} \right).\nonumber\\ 
\label{eq:847b}
\end{align}
Now we estimate the third probability on the right side.
On the event $\{\tau_{j}^{k,S}(t-1)=\tau_{j}^{k,S}(t)\}$, we have
\begin{equation}\label{eq:Sequality}
S^{k}_{j}\left(r_{k}^{2}\bar{B}^{k}_{j}(t)\right)=S^{k}_{j}\left(r_{k}^{2}\bar{B}^{k}_{j}(t-1)\right). 
\end{equation}
Also, since for any $s\geq 0$ 
\begin{equation*}
A^{k}_j \left(r^2_k (s+t-1)\right)=A^{k,t-1}_j\left(r_k^2(s- \bar{\Upsilon}^{A,k}_{j}(t-1))^+\right)+ A_j^{k}\left(r^2_k (t-1)\right)+\mathcal{I}_{\{s\geq  \bar{\Upsilon}^{A,k}_{j}(t-1)>0\}}, 
\end{equation*}
on $\mathcal{H}^{t-1,k}$ we also have
\begin{align*}
\frac{1}{r_{k}}A^{k}_{j}\left(r_k^2(t-1/4)\right) &\geq \frac{1}{r_k}A_j^{k,t-1}\left(r_k^2(3/4 - \bar \Upsilon^{A,k}(t-1))\right) + \frac{1}{r_k} A_j^{k}\left(r_k^2(t-1)\right) 
  \\
&\ge \frac{1}{r_k} A_j^{k}\left(r_k^2(t-1)\right) + r_k\alpha_j^{r_k} (3/4 - \bar \Upsilon^{A,r_k}(t-1)) - \frac{r_k\alpha_j^{r_k}}{4} \\
&\geq \frac{1}{r_{k}}A^{k}_{j}(r_k^2(t-1))+\frac{r_{k}\alpha^{k}_{j}}{4}. 
\end{align*}
This and \eqref{eq:Sequality} give, on the event $\{\tau_{j}^{k,S}(t-1)=\tau_{j}^{k,S}(t)\}\cap\mathcal{H}^{t-1,k}$,
\[
\hat{Q}_{j}^{k}(s)\geq \hat{Q}_{j}^{k}(t-1)+\frac{r_{k}\alpha^{k}_{j}}{4}\geq \frac{r_{k}\alpha^{k}_{j}}{4}, \qquad s \in \left[t - 1/4, t\right]. 
\]
Therefore,  on this event  we have
\[
\int_{t-1}^{t}\hat{Q}_{j}^{k}(s)ds\geq \int_{t-1/4}^{t}\hat{Q}_{j}^{k}(s)ds\geq \frac{r_{k}\alpha^{k}_{j}}{16}, 
\]
from which it follows that, for any $t > 1$, 
\[
P\left(\{\tau_{j}^{k,S}(t-1)=\tau_{j}^{k,S}(t)\} \cap \mathcal{H}^{t-1,k}\right)\leq\frac{16E\left[ \int_{t-1}^{t}\hat{Q}_{j}^{k}(s)ds \right]}{ r_{k}\alpha^{k}_{j}}\leq \frac{16\int_{t-1}^{t}E\left[h\cdot\hat{Q}^{k}(s) \right]ds}{r_k \alpha^{k}_{j} \min_{1 \le l \le J} h_l}.
\]
Consequently, recalling \eqref{eq:847b}, for all $t> 1$ we have, with 
$\kappa \doteq 16 \sup_{r>0, j \in \mathbb{A}_J}(\alpha^{r}_{j})^{-1}(\min_{1 \le l \le J} h_l)^{-1}$
\begin{align*}
P\left(r_{k}\bar{\Upsilon}^{S,k}_{j}(t)>\epsilon \right)&\leq 
 P(\mathcal{C}_{\epsilon_0}^{t-1,k})^c +P(\mathcal{H}^{t-1,k})^c+\frac{\kappa}{ r_k} \int_{t-1}^{t}E\left[h\cdot\hat{Q}^{k}(s) \right]ds, 
\end{align*}
which
gives
\begin{align*}
&\frac{1}{T_{k}}\int_{0}^{T_{k}}P\left(r_{k}\bar{\Upsilon}^{S,k}_{j}(t)>\epsilon\right)dt \\
&\leq\frac{1}{T_{k}}+\frac{1}{T_{k}}\int_{1}^{T_{k}}\left(P(\mathcal{C}_{\epsilon_0}^{t-1,k})^c 
+P(\mathcal{H}^{t-1,k})^c\right)dt 
+ \frac{\kappa}{ r_k} \cdot \frac{1}{T_k}\int_{1}^{T_{k}}\int_{t-1}^{t}E\left[h\cdot\hat{Q}^{k}(s)\right]dsdt\\
&\leq \frac{1}{T_k} + \sup_{t\geq 0}P(\mathcal{C}_{\epsilon_0}^{t,k})^c +\sup_{t\geq 0}P(\mathcal{H}^{t,k})^c 
+ \frac{\kappa}{ r_k} \cdot \frac{1}{T_k}\int_{0}^{T_{k}}E\left[h\cdot\hat{Q}^{k}(t)\right] dt\\
&\leq\frac{1}{T_k} + \sup_{t\geq 0}P(\mathcal{C}_{\epsilon_0}^{t,k})^c +\sup_{t\geq 0}P(\mathcal{H}^{t,k})^c+
\frac{\kappa}{ r_k}\left(J^{k}_E(B^{k})+\frac{1}{r_k}\right). 
\end{align*}
Equations \eqref{eq:HsetProbBnd} and \eqref{eq:CsetProbBnd} show that the right side of the above inequality goes to $0$ as $k\rightarrow \infty$, which, since $j\in \AAA_J$ and $\epsilon>0$ were arbitrary, completes the proof of (ii).
\hfill \qed

\subsection{Proof of Proposition \ref{prop:FuildScaledAllocConv}}
\label{sec:Proofof PropFluidScaledAllocConv}
Part (i) follows directly from Proposition \ref{prop:ArrServBnds} (i) and 
\[
	\left| \left(s - \bar\Upsilon^{A,k}(t)\right)^+ - s \right| \le \left| \bar\Upsilon^{A,k}(t) \right|. 
\]
Now we prove part (ii). 
Fix $u\ge0$, and 
for each $j\in\AAA_J$ and $\epsilon>0$ define
\begin{align*}
\mathcal{U}^{j,k}_{\epsilon,t}&=\left\{ \inf_{0\leq s \leq u }\left(\frac{1}{\rho_{j}}\left(\bar{B}_{j}^{k}(s+t)-\bar{\xi}_j^{S,k}(t)\right)^{+}-s\right) < -\epsilon\right\}, \\
\mathcal{O}^{j,k}_{\epsilon,t}&=\left\{ \sup_{0\leq s \leq u }\left(\frac{1}{\rho_{j}}\left(\bar{B}_{j}^{k}(s+t)-\bar{\xi}_j^{S,k}(t)\right)^{+}-s\right) > \epsilon\right\}, 
\end{align*}
and
\begin{align*}
\mathcal{E}^{j,k}_{\epsilon,t}&=\left\{\sup_{0\leq s \leq u}\left|\frac{1}{r_{k}} A_{j}^{k,t}(r_{k}^{2}s)-r_{k}s\alpha^{k}_{j}\right| \leq \frac{\epsilon}{24}\alpha_{j}  r_{k}\right\}\cap\left\{\sup_{0\leq s \leq Lu}\left|\frac{1}{r_{k}} S_{j}^{k,t}(r_{k}^{2}s)-r_{k}s\beta^{k}_{j}\right| \leq  \frac{\epsilon}{24}\alpha_{j}  r_{k}\right\} \\
&\quad \cap\left\{\sup_{0\leq s \leq u}\left|\left(s-\bar{\Upsilon}_{j}^{A,k}(t)\right)^{+}-s\right| \leq \frac{\epsilon}{24} \right\}. 
\end{align*}
Note that with these definitions, 
\begin{equation}
\label{eq:residServTimeEqOU}
\left\{\sup_{s\in [0,u]}\left|\left(\bar{B}_j^{k}(s+t)-\bar{\xi}_j^{S,k}(u)\right)^+-\rho_j s\right|>\epsilon\right\}=\mathcal{U}^{j,k}_{\rho_j^{-1}\epsilon,t}\cup \mathcal{O}^{j,k}_{\rho_j^{-1}\epsilon,t}, 
\end{equation}
and due to  Lemma \ref{CLTLLN} (i) combined with Lemma \ref{lem:sameDist} and part (i) proved above, we have
\begin{equation}
\label{eq:badBehavSetAllocConvProof}
\lim_{k\rightarrow\infty}\sup_{t\geq 0}P (\mathcal{E}^{j,k}_{\epsilon,t})^c=0
\end{equation}
 for all $j\in \AAA_J$.

  The idea behind the proof is as follows. On the event $\mathcal{E}^{j,k}_{\epsilon,t}$, an under-allocation of resource $j$ (the event $\mathcal{U}^{j,k}_{\epsilon,t}$) implies that the scaled queue length, $\hat{Q}^r_j$, gets very large. So, we can show that the (time averaged) probability of $\mathcal{U}^{j,k}_{\epsilon,t}$ goes to $0$ using the fact that $\{B^{k}\}$ is an asymptotically optimal sequence of controls along with \eqref{eq:badBehavSetAllocConvProof}.  We then use this along with the fact that an over-allocation in resource $j$ (the event $\mathcal{O}^{j,k}_{\epsilon,t}$) implies an underallocation in another (see the resource constaint in Definition \ref{def:admissible} (ii)) to show that the (time averaged) probability of $\mathcal{O}^{j,k}_{\epsilon,t}$ goes to $0$.  These two results combined with \eqref{eq:residServTimeEqOU} complete the proof. 

We first show that 
\begin{equation}
\label{eq:residSerTimeUtoZero}
\lim_{k\rightarrow\infty}\frac{1}{T_{k}}\int_{0}^{T_{k}}P\left( \mathcal{U}^{j,k}_{\epsilon,t}\right)dt=0
\end{equation}
for all $j\in\AAA_J$ and $\epsilon > 0$.
When $u = 0$, $\mathcal{U}^{j,k}_{\epsilon,t} = \{(\bar B_j^{k}(t) - \bar\xi_j^{S,k}(t))^+ < -\epsilon\}  = \emptyset$, so we may assume that $u > 0$, and without loss of generality that $\epsilon \in (0,u)$. 
Also assume that $k$ is sufficiently large that $1/r_k^2\leq \alpha_j\epsilon/8$ and that, for all $j\in \AAA_J$, 
\begin{equation}
\label{eq:alphaRcompAlphaLargeK}
\left(1 - \frac{\epsilon}{12u}\right)\alpha_j \le \alpha^{k}_{j} \le 2\alpha_j, 
\end{equation}
and
\begin{equation}
\label{eq:betaRcompBetaLargeK}
\beta^{k}_{j}\rho_{j} = \beta^{k}_{j} \frac{\alpha_j}{\beta_j}\leq \left(1 + \frac{\epsilon}{12u}\right)\alpha_{j}.
\end{equation}
For any $t,s\geq0$, from \eqref{eq:328},
\begin{align}
\hat{Q}^{k}_{j}(s+t)=&\text{ }\hat{Q}^{k}_{j}(t)+\frac{1}{r_{k}}A_{j}^{k,t}\left(r_{k}^{2}\left(s-\bar{\Upsilon}^{A,k}_{j}(t)\right)^{+}\right)-\frac{1}{r_{k}}S_{j}^{k,t}\left(r_{k}^{2}\left(\bar{B}_{j}^{k}(s+t)-\bar{\xi}_{j}^{S,k}(t)\right)^{+}\right)\nonumber \\
&+\frac{1}{r_{k}}\mathcal{I}_{\left\{s\geq \bar{\Upsilon}^{A,k}_{j}(t)>0\right\}}-\frac{1}{r_{k}}\mathcal{I}_{\left\{\bar{B}_{j}^{k}(s+t)\geq\bar{\xi}_{j}^{S,k}(t), \bar{\Upsilon}^{S,k}_{j}(t)>0\right\}}, \label{eq:queueLengthDiff}
\end{align}
which, combined with $1/r_k^2\leq \alpha_j\epsilon/8$, gives 
\begin{equation*}
\hat{Q}^{k}_{j}(s+t) \geq \frac{1}{r_{k}}A_{j}^{k,t}\left(r_{k}^{2}\left(s-\bar{\Upsilon}^{A,k}_{j}(t)\right)^{+}\right)-\frac{1}{r_{k}}S_{j}^{k,t}\left(r_{k}^{2}\left(\bar{B}_{j}^{k}(s+t)-\bar{\xi}_{j}^{S,k}(t)\right)^{+}\right) - \frac{\alpha_jr_k\epsilon}{8}. 
\end{equation*}
Note that $\rho_{j}^{-1}(\bar{B}_{j}^{k}(s+t)-\bar{\xi}^{S,r}(t))^{+}-s\geq -\epsilon$ for all $s\in [0,\epsilon]$, so on the set $\mathcal{U}^{j,k}_{\epsilon,t}$, there exists some $s^*\in (\epsilon,u]$ such that 
\begin{equation*}
\frac{1}{\rho_{j}}\left(\bar{B}_{j}^{k}(s^*+t)-\bar{\xi}^{S,r}(t)\right)^{+}-s^*<-\epsilon.
\end{equation*}  
Consequently, on the set $\mathcal{E}^{j,k}_{\epsilon,t}\cap\mathcal{U}^{j,k}_{\epsilon,t}$ we have
\begin{align*}
\hat{Q}^{k}_{j}(s^{*}+t)&\geq \alpha^{k}_{j} r_{k}\left(s^{*}-\bar{\Upsilon}^{A,k}_{j}(t)\right)^{+}-\frac{\epsilon}{24}\alpha_{j}  r_{k}- r_{k}\beta^{k}_{j}\left(\bar{B}_{j}^{k}(s^{*}+t)-\bar{\xi}_{j}^{S,k}(t)\right)^{+}-\frac{\epsilon}{24}\alpha_{j}  r_{k} 
 -\frac{\alpha_jr_k\epsilon}{8} \\
&\geq  \alpha^{k}_{j} r_{k}\left(s^{*} -\frac{\epsilon}{24}\right)- r_{k}\beta^{k}_{j}\rho_{j}(s^{*}-\epsilon)-  \frac{\epsilon}{12}\alpha_{j}r_{k} - \frac{\alpha_jr_k\epsilon}{8} \\
&\ge \left(1 - \frac{\epsilon}{12u}\right)\alpha_jr_k\left(s^*-\frac{\epsilon}{24}\right) - r_k\left(1 + \frac{\epsilon}{12u}\right)\alpha_j(s^*-\epsilon) - \frac{5\alpha_jr_k\epsilon}{24} \\
&= - \frac{\alpha_jr_ks^*\epsilon}{6u} + \frac{9\alpha_jr_k\epsilon}{12} + \frac{25\alpha_jr_k\epsilon^2}{288u} \\
&\ge \frac{7\alpha_jr_k\epsilon}{12}, 
\end{align*}
where third inequality uses \eqref{eq:alphaRcompAlphaLargeK} and \eqref{eq:betaRcompBetaLargeK}, and the last inequality uses $s^* \le u$.
Now consider $s \in [\epsilon/24, s^*]$. 
Note that, on $\mathcal{E}^{j,k}_{\epsilon,t}\cap \mathcal{U}^{j,k}_{\epsilon,t}$, $\bar{\Upsilon}^{A,k}_{j}(t)\leq \frac{\epsilon}{24}$, and so 
using \eqref{eq:queueLengthDiff}, \eqref{eq:alphaRcompAlphaLargeK}, \eqref{eq:betaRcompBetaLargeK}, the fact that $S_j^{k,t}(\cdot)$ and $\bar B_j^{k}(\cdot)$ are increasing, and that on $\mathcal{E}^{j,k}_{\epsilon,t}$
\[
	\alpha_j^{r_k}r_kx - \frac{\alpha_jr_k\epsilon}{24} \le \frac{1}{r_k} A_j^{k,t}(r_k^2x) \le \alpha_j^{r_k}r_kx + \frac{\alpha_jr_k\epsilon}{24}, \qquad x \in [0,u], 
\]
we have, on $\mathcal{E}^{j,k}_{\epsilon,t}\cap \mathcal{U}^{j,k}_{\epsilon,t}$,
\begin{align*}
	\hat{Q}^{k}_{j}(s+t) - \hat{Q}^{k}_{j}(s^{*}+t) &\ge \frac{1}{r_{k}}A_{j}^{k,t}\left(r_{k}^{2}\left(s-\bar{\Upsilon}^{A,k}_{j}(t)\right)^{+}\right)-\frac{1}{r_{k}}A_{j}^{k,t}\left(r_{k}^{2}\left(s^*-\bar{\Upsilon}^{A,k}_{j}(t)\right)^{+}\right) \\
	&\ge \alpha_j^{k}r_k\left(s - \bar{\Upsilon}^{A,k}_{j}(t)\right)^+ - \frac{\alpha_jr_k\epsilon}{24} -  \alpha_j^{k}r_k\left(s^* - \bar{\Upsilon}^{A,k}_{j}(t)\right)^+ - \frac{\alpha_jr_k\epsilon}{24} \\
	&\ge - \alpha_j^{r_k}r_k(s^*-s) - \frac{\alpha_jr_k\epsilon}{12} \\
	&\ge -2\alpha_jr_k(s^*-s) - \frac{\alpha_jr_k\epsilon}{12}.
\end{align*}
It follows that 
\[
	\hat{Q}^{k}_{j}(s+t) \ge  \frac{7\alpha_jr_k\epsilon}{12}-2\alpha_jr_k(s^*-s) - \frac{\alpha_jr_k\epsilon}{12} =  \frac{\alpha_jr_k\epsilon}{2} -2\alpha_jr_k(s^*-s) . 
\]
Then, recalling that $s^*>\epsilon$, for any $s\in \left[s^*-\frac{\epsilon}{8},s^*\right] \subset  [\epsilon/24, s^*]$ and on the event $\mathcal{E}^{j,k}_{\epsilon,t}\cap \mathcal{U}^{j,k}_{\epsilon,t}$, 
we have 
\begin{equation*}
\hat{Q}^{k}_{j}(s+t)  \geq  \frac{\alpha_jr_k\epsilon}{2}  - \frac{\alpha_jr_k\epsilon}{4} = \frac{\alpha_jr_k\epsilon}{4}. 
\end{equation*}
Therefore, 
\[
E\left[\left.\int_{s^*-\frac{\epsilon}{8}}^{s^*}\hat{Q}^{k}_{j}(s+t)ds\right|\mathcal{E}^{j,k}_{\epsilon,t}\cap \mathcal{U}^{j,k}_{\epsilon,t}\right] \geq \frac{r_{k}\alpha_{j}\epsilon^2}{32}, 
\]
and consequently, 
\begin{align*}
E\left[\int_{0}^{u}\hat{Q}^{k}_{j}(s+t)ds\right]  &\geq E\left[\mathcal{I}_{\mathcal{E}^{j,k}_{\epsilon,t}\cap \mathcal{U}^{j,k}_{\epsilon,t}}\int_{s^*-\frac{\epsilon}{8}}^{s^*}\hat{Q}^{k}_{j}(s+t)ds\right] \\
&=  E\left[\mathcal{I}_{\mathcal{E}^{j,k}_{\epsilon,t}\cap \mathcal{U}^{j,k}_{\epsilon,t}}E\left[\left.\int_{s^*-\frac{\epsilon}{8}}^{s^*}\hat{Q}^{k}_{j}(s+t)ds\right|\mathcal{E}^{j,k}_{\epsilon,t}\cap \mathcal{U}^{j,k}_{\epsilon,t}\right]\right] \\
 &\geq P\left(\mathcal{E}^{j,k}_{\epsilon,t}\cap \mathcal{U}^{j,k}_{\epsilon,t}\right)\cdot \frac{r_{k}\alpha_{j}\epsilon^2}{32}, 
\end{align*}
which says that
\begin{equation*}
P\left(\mathcal{E}^{j,k}_{\epsilon,t}\cap \mathcal{U}^{j,k}_{\epsilon,t}\right) \leq \frac{32 }{r_{k}\alpha_{j}\epsilon^2}E\left[\int_{0}^{u}\hat{Q}^{k}_{j}(s+t)ds\right].
\end{equation*}
Therefore,
\begin{align*}
\frac{1}{T_{k}}\int_{0}^{T_{k}}P\left( \mathcal{U}^{j,k}_{\epsilon,t}\right)dt&\leq \frac{1}{T_{k}}\int_{0}^{T_{k}}\left(P\left( (\mathcal{E}^{j,k}_{\epsilon,t})^c\right) + P\left(\mathcal{E}^{j,k}_{\epsilon,t}\cap \mathcal{U}^{j,k}_{\epsilon,t}\right)\right)dt\\
&\leq \sup_{t\geq 0}P\left( (\mathcal{E}^{j,k}_{\epsilon,t})^c\right) +\frac{32}{r_{k}\alpha_{j}\epsilon^2}\cdot  \frac{1}{T_{k}}\int_{0}^{T_{k}}E\left[\int_{0}^{u}\hat{Q}^{k}_{j}(s+t)ds\right]dt\\
&\leq \sup_{t\geq 0}P\left( (\mathcal{E}^{j,k}_{\epsilon,t})^c\right) + \frac{32}{r_{k}\alpha_{j}\epsilon^2}\cdot \frac{u}{T_{k}} E\left[\int_{0}^{T_k+u}\hat{Q}^{k}_{j}(t)dt\right]\\
&\leq \sup_{t\geq 0}P\left( (\mathcal{E}^{j,k}_{\epsilon,t})^c\right) + \frac{32}{r_{k}\alpha_{j}\epsilon^2}\cdot \frac{u(T_{k}+u)}{T_k}E\left[\frac{1}{T_{k}+u}\int_{0}^{T_k+u}\hat{Q}^{k}_{j}(t)dt\right]. 
\end{align*}
Now \eqref{eq:residSerTimeUtoZero} follows as in the proof of Proposition \ref{prop:ArrServBnds} on using
\eqref{eq:badBehavSetAllocConvProof}, the properties of $(r_k,B^{k},T_k)$ in Definition \ref{def:optSeq}, and   Lemma \ref{lem:hHatBnds}.  

Next, we show that
\begin{equation}
\label{eq:residSerTimeOtoZero}
\lim_{k\rightarrow\infty}\frac{1}{T_{k}}\int_{0}^{T_{k}}P\left( \mathcal{O}^{l,k}_{\epsilon,t}\right)dt=0
\end{equation}
for all $l\in\AAA_J$ and $\eps > 0$. 
Fix $l\in\AAA_J$, so on the set $\mathcal{O}^{l,k}_{\epsilon,t}$ there exists some $s^*\in [0,u]$ such that
\begin{equation}\label{eq:Blpos}
\bar{B}_{l}^{k}(s^*+t)-\bar{B}_{l}^{k}(t)\geq \left(\bar{B}_{l}^{k}(s^*+t)-\bar{\xi}^{S,r}_l(t)\right)^{+} >\rho_l s^*+\rho_l\epsilon.
\end{equation}
Note that there exists some $i\in\AAA_I$ such that $K_{i,l}=1$.  If $\{j:K_{i,j}=1\}\setminus\{l\}=\emptyset$ (i.e., that $l$ is the only job type processed by resource $i$), then $C_{i}=\rho_l$ (see Condition \ref{heavytraffic}) and
\begin{equation*}
\left[K\left(\bar{B}_{l}^{k}(s^*+t)-\bar{B}_{l}^{k}(t)\right)\right]_{i}>\rho_l s^*+\rho_l\epsilon>C_{i}s^*, 
\end{equation*}
which, due to Definition \ref{def:admissible} (ii), cannot occur with positive probability. This means that $\{j:K_{i,j}=1\}\setminus\{l\}=\emptyset$ implies that $P\left(\mathcal{O}^{l,k}_{\epsilon,t}\right)=0$.  So, now assume that $\{j:K_{i,j}=1\}\setminus\{l\}\neq\emptyset$, i.e. that resource $i$ processes job types other than $l$.  Then in order to satsify Definition \ref{def:admissible} (ii),  we must have
\begin{align*}
 C_{i}s^*\geq \sum_{j\in\AAA_J}K_{i,j}\left(\bar{B}_{j}^{k}(s^*+t)-\bar{B}_{j}^{k}(t)\right)> \sum_{j\in\AAA_J\setminus\{l\}}K_{i,j}\left(\bar{B}_{j}^{k}(s^*+t)-\bar{B}_{j}^{k}(t)\right)+\rho_l s^*+\rho_l \epsilon. 
\end{align*}
Since $C_{i}=\sum_{j\in\AAA_J}K_{i,j}\rho_j$, this gives
\begin{equation*}
\sum_{j\in\AAA_J\setminus\{l\}}K_{i,j}\rho_j s^*-\rho_l \epsilon>\sum_{j\in\AAA_J\setminus\{l\}}K_{i,j}\left(\bar{B}_{j}^{k}(s^*+t)-\bar{B}_{j}^{k}(t)\right), 
\end{equation*}
which implies
\begin{align*}
	-\rho_l\epsilon &> \sum_{j \in \AAA_J\setminus\{l\}} K_{i,j}\left(\bar B_j^{k}(s^*+t) - \bar B_j^{k}(t) - \rho_j s^*\right) \\
	&\ge \#\left(\{j : K_{i,j}=1\}\setminus\{l\} \right) \cdot \max_{j \in \AAA_J\setminus\{l\}}\rho_j \cdot \min_{j \in \AAA_J\setminus\{l\}} \left(\frac{1}{\rho_j} \left(\bar B_j^{k}(s^*+t) - \bar B_j^{k}(t)\right) - s^*\right) \\
	&\ge J\cdot  \max_{j \in \AAA_J}\rho_j \cdot \min_{j \in \AAA_J\setminus\{l\}} \left(\frac{1}{\rho_j} \left(\bar B_j^{k}(s^*+t) - \bar B_j^{k}(t)\right) - s^*\right), 
\end{align*}
since $\{j:K_{i,j}=1\}\setminus\{l\}\neq\emptyset$ and we must have
$ (\rho_j^{-1} (\bar B_j^{k}(s^*+t) - \bar B_j^{k}(t)) - s^*) < 0$ for some $j \in \AAA_J\setminus\{l\}$ in order for the inequality to be satisfied.  This gives 
\begin{align*}
\min_{j\in \AAA_J}\left(\frac{1}{\rho_j}\left(\bar{B}_{j}^{k}(s^*+t)-\bar{\xi}_j^{S,r}(t)\right)^+-s^*\right)&\leq\min_{j \in \AAA_J\setminus\{l\}}\left(\frac{1}{\rho_j}\left(\bar{B}_{j}^{k}(s^*+t)-\bar{B}_{j}^{k}(t)\right)-s^*\right) \\
&<-\frac{\rho_l\epsilon}{J\max_j\rho_j} \doteq - \tilde \epsilon.
\end{align*}
Consequently,  
\begin{equation*}
\mathcal{O}^{l,k}_{\epsilon,t}\subset \bigcup_{j\in \AAA_j}\mathcal{U}^{j,k}_{\tilde \epsilon,t}, 
\end{equation*}
and so, from \eqref{eq:residSerTimeUtoZero} we now have that
\begin{equation*}
\lim_{k\to\infty} \frac{1}{T_k}\int_0^{T_k}P\left(\mathcal{O}^{l,k}_{\epsilon,t}\right)\,dt \leq \lim_{k\to\infty}\sum_{j \in \AAA_J}\frac{1}{T_k}\int_0^{T_k}P\left(\mathcal{U}^{j,k}_{\tilde \epsilon,t} \right)\,dt = 0.
\end{equation*}
This establishes \eqref{eq:residSerTimeOtoZero}, which together with \eqref{eq:residServTimeEqOU} and \eqref{eq:residSerTimeUtoZero}
completes the proof of (ii).\\ \ \\ 

\noindent {\bf Acknowledgements}
AB was supported in part by the NSF (DMS-2134107 and DMS-2152577). 




\bibliography{networks}
\bibliographystyle{amsplain}

\vspace{\baselineskip}
%
%
\scriptsize{
\textsc{\noindent A. Budhiraja \newline
Department of Statistics and Operations Research, and\newline
School of Data Science and Society\newline
University of North Carolina\newline
Chapel Hill, NC 27599, USA\newline
email:  budhiraj@email.unc.edu \vspace{\baselineskip} }

\textsc{\noindent M. Conroy\newline
School of Mathematical and Statistical Sciences,\newline
 Clemson University,\newline
  Clemson SC 29634, USA\newline
email: mconroy@clemson.edu \vspace{\baselineskip}}

\textsc{\noindent D. Johnson\newline
Department of Mathematics and Statistics, \newline
Elon University,\newline
 Elon, NC 27244, USA\newline
email: djohnson52@elon.edu}
}

\end{document}